\newcommand{\comment}[1]{} 
\def\abstand{$\vrule width 0pt height 13pt $}
\newtheorem{Thm}{Theorem}[section]
\newtheorem{Cor}[Thm]{Corollary}
\newtheorem{Lem}[Thm]{Lemma}
\newtheorem{Prop}[Thm]{Proposition}
\theoremstyle{definition}
\newtheorem{Def}[Thm]{Definition}
\newtheorem{Exm}[Thm]{Example}
\newtheorem{Rk}[Thm]{Remark}
\newcommand{\sm}{\left(\smallmatrix}
\newcommand{\esm}{\endsmallmatrix\right)}
\newcommand{\Z}{\mathbb Z}
\newcommand{\Q}{\mathbb Q}
\newcommand{\C}{\mathbb C}
\newcommand{\R}{\mathbb R}
\newcommand{\F}{\mathbb F}
\begin{document}
\title[Bielliptic intermediate modular curves]{Bielliptic intermediate modular curves}
\author[D. Jeon, C.-H. Kim and A. Schweizer]{Daeyeol Jeon, Chang Heon Kim and Andreas Schweizer}

\address{Daeyeol Jeon, 
Department of Mathematics Education, 
Kongju National University, 
182 Shinkwan-dong, Kongju, Chungnam 314-701, South Korea}
\email{dyjeon@kongju.ac.kr}

\address{Chang Heon Kim,
Department of Mathematics, 
Sungkyunkwan University, 
Suwon 440-746, South Korea}
\email{chhkim@skku.edu}

\address{Andreas Schweizer,
Department of Mathematics, 
Korea Advanced Institute of Science and Technology (KAIST), 
Daejeon 305-701, South Korea}
\email{schweizer@kaist.ac.kr}

\thanks{The first author was supported by the Basic Science Research Program through the National Research Foundation of Korea (NRF)
funded by the Ministry of Education
(NRF-2016R1D1A1B03934504).
The second author was supported by the
National Research Foundation of Korea (NRF) grant funded by the Korea government(MSIP) (NRF-2015R1D1A1A01057428 and 2016R1A5A1008055). The second and the third author were supported by 
the National Research Foundation of Korea (NRF) grant funded
by the Korean government (MSIP) (ASARC, NRF-2007-0056093).}

\begin{abstract}
We determine which of the modular curves $X_\Delta(N)$, that is, curves
lying between $X_0(N)$ and $X_1(N)$, are bielliptic. Somewhat surprisingly, 
we find that one of these curves has exceptional automorphisms. Finally 
we find all $X_\Delta(N)$ that have infinitely many quadratic points over 
$\mathbb{Q}$.
\\
{\bf 2010 Mathematics Subject Classification:} primary 11G18, secondary 14H45, 14H37, 11G05 
\\
{\bf Keywords:} modular curve; hyperelliptic; bielliptic; infinitely many 
quadratic points
\end{abstract}

\maketitle \setcounter{section}{-1}

\section{Introduction}
Throughout this paper we assume that a curve $X$ is always defined over a field of characteristic $0$. 
For any positive integer $N$, let $\Gamma_1(N)$, $\Gamma_0(N)$ be the congruence subgroups
of $\Gamma={\rm SL}_2(\mathbb Z)$ consisting of the matrices $\sm a&b\\c&d\esm$ congruent
modulo $N$ to $\sm 1&*\\0&1\esm$, $\sm *&*\\0&*\esm$ respectively.
We let $X_1(N)$, $X_0(N)$ be the modular curves associated to
$\Gamma_1(N)$, $\Gamma_0(N)$ respectively.
Let $\Delta$ be a subgroup of $({\mathbb Z}/{N \mathbb Z})^*$, and let
$X_\Delta(N)$ be the modular curve defined over $\mathbb Q$
associated to the modular group $\Gamma_\Delta(N):$

$$\Gamma_\Delta(N)=\left\{{\begin{pmatrix}a&b\\c&d\end{pmatrix}}
\in\Gamma\,|\,c\equiv 0\,\, {\rm mod}\,\, N, (a\,\, {\rm mod}\,\, N)\in\Delta\right\}.$$

Since the negative of the unit matrix acts as identity on the complex upper halfplane we
have $X_\Delta(N)=X_{\langle \pm1,\Delta\rangle}(N)$. And since we are only interested in 
these curves (and not for example in modular forms for such a group $\Gamma_{\Delta}(N)$), 
we can and will always assume that $-1\in\Delta.$ 
\par 
If $\Delta=\{\pm 1\}$ (resp.
$\Delta=({\mathbb Z}/{N \mathbb Z})^*$) then $X_\Delta(N)$ is
equal to $X_1(N)$ (resp. $X_0(N)$). If $\Delta_1$ is a subgroup of $ \Delta_2$, the inclusions 
$\pm\Gamma_1(N)\subseteq\Gamma_{\Delta_1}(N)
\subseteq\Gamma_{\Delta_2}(N)\subseteq\Gamma_0(N)$
induce natural Galois covers 
$X_1(N)\to X_{\Delta_1}(N)\to X_{\Delta_2}(N) \to X_0(N)$. 
Denote the genus of $X_\Delta(N)$ by $g_\Delta(N)$.
\par
We point out that each such curve always has a $\Q$-rational point, namely the cusp $0$. 
See Lemma \ref{lem:cuspsfieldofdef} below. This simplifies the discussion 
of properties like being a rational curve, being elliptic,
or being hyperelliptic.
\par
The next interesting property after being hyperelliptic is being bielliptic. 
A curve $X$ of genus $g(X)\geq 2$ defined over a field $F$ is called {\it bielliptic} 
if it admits a map $\phi:X\to E$ of degree $2$ over an algebraic closure $\overline{F}$ onto an elliptic curve $E$.
Equivalently, $X$ has an involution $v$ (called {\it bielliptic involution}) such that
$X/\langle v\rangle\cong E$.
\par
Harris and Silverman \cite[Corollary 3]{H-S} showed that if a curve 
$X$ with $g(X)\geq 2$ defined over a number field $F$ is neither
hyperelliptic nor bielliptic, then the set of quadratic points on
$X,$ $$\{P\in X(\overline F):[F(P):F]\leq 2\}$$ is finite.
See \cite[Theorem 2.14]{B2} (cited as Theorem \ref{thm:precise} 
below) for a precise ``if-and-only-if''-statement.
\par
Bars \cite{B1} determined all the bielliptic modular curves of
type $X_0(N)$ and also found all the curves $X_0(N)$ which have
infinitely many quadratic points over $\mathbb Q$. The first two 
authors \cite{J-K1} did the same work for the curves $X_1(N)$.
\par
In this paper we shall determine all the 
bielliptic intermediate modular curves $X_\Delta(N)$ and
find all the curves $X_\Delta(N)$ which have
infinitely many quadratic points over $\mathbb Q$.
This type of curves is in some sense more difficult to handle 
than the other ones,
because there are various subgroups $\Delta$ and the 
automorphism groups of the curves $X_\Delta(N)$ are not determined 
yet for most cases.
\par

The first key tool is the following result.
\begin{Prop}\cite[Proposition 1]{H-S}\label{image}
If $X$ is a bielliptic curve, and if $X\to Y$ is a finite map, then $Y$ is subhyperelliptic (i.e. rational, elliptic or hyperelliptic) or bielliptic.
\end{Prop}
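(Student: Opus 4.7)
The plan is to encode the bielliptic structure on $X$ as a special curve inside the symmetric square $X^{(2)}$, push that curve through to $Y^{(2)}$ via the morphism induced by $f$, and then read off the resulting structure on $Y$.

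The key tool I would first record is the following dictionary between irreducible curves in $C^{(2)}$ (for a curve $C$ of genus $\geq 2$) and degree-$2$ pencils on $C$: $C$ is hyperelliptic if and only if $C^{(2)}$ contains a smooth rational curve not contained in the diagonal, and $C$ is bielliptic if and only if $C^{(2)}$ contains an elliptic curve not contained in the diagonal. The forward directions are immediate from the assignment $e\mapsto \pi^{-1}(e)$, where $\pi$ is the relevant degree-$2$ quotient. Applied to $X$ with $\pi : X \to E = X/\langle v\rangle$, this embeds $E$ as an elliptic curve in $X^{(2)}$ off the diagonal. The finite morphism $f$ induces $f^{(2)} : X^{(2)} \to Y^{(2)}$, $\{p,q\}\mapsto\{f(p),f(q)\}$, and I would focus on the image $Z := f^{(2)}(E)\subseteq Y^{(2)}$.

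I would then split into cases by the geometric genus of $Z$. If $Z$ is a point or lies in the diagonal, then $f(p)=f(v(p))$ for every $p$, so $f$ factors through $\pi$; hence $Y$ is dominated by $E$, forcing $g(Y)\leq 1$, and $Y$ is subhyperelliptic. If $Z$ is a rational curve off the diagonal, the dictionary produces a $g^1_2$ on $Y$, making $Y$ hyperelliptic or rational. If $Z$ is an elliptic curve off the diagonal, the dictionary produces a degree-$2$ morphism $Y \to Z$, making $Y$ bielliptic. In every case $Y$ is subhyperelliptic or bielliptic.

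The main obstacle is the reverse direction of the dictionary in the elliptic case: showing that an elliptic curve $E'\subset Y^{(2)}$ off the diagonal actually produces a degree-$2$ morphism $Y\to E'$, rather than something weaker such as a high-degree map from a cover of $E'$. I would argue via the incidence correspondence $I := \{(e',y)\in E'\times Y : y \in D_{e'}\}$, where $D_{e'}$ is the effective divisor of degree $2$ on $Y$ parametrized by $e' \in E'$. The projection $I\to E'$ is of degree $2$; for the projection $I\to Y$, the degenerate possibilities (the family $\{D_{e'}\}$ having a base point, which would force $Y$ to be dominated by $E'$ and so contradict $g(Y)\geq 2$; or $I\to Y$ having degree greater than $1$, which is controlled by Hurwitz or Castelnuovo--Severi) are ruled out. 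Once $I\to Y$ is shown to be birational, $Y\cong I$ becomes a degree-$2$ cover of $E'$, as required.
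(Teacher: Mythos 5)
The paper offers no proof of this Proposition; it is imported verbatim from \cite[Proposition 1]{H-S}, so the only comparison available is with the known argument of Harris--Silverman. Your overall architecture --- push the bielliptic pencil forward to $Y^{(2)}$ and sort the image $Z=f^{(2)}(E)$ by geometric genus --- is the standard one and is sound as far as it goes. The degenerate cases are fine ($Z$ a point contradicts finiteness of $f$; $Z\subseteq$ diagonal forces $f$ to factor through $E$), and so is the rational case, provided you replace ``smooth rational curve'' by ``curve of geometric genus $0$'' and extract the $g^1_2$ by composing with the Abel--Jacobi map, under which a rational curve must be contracted.

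The gap is exactly where you flag it, and the proposed repair does not close it. First, the tools you name cannot show $\deg(I\to Y)=1$: with $I\to E'$ of degree $2$ and $I\to Y$ of degree $m$, Castelnuovo--Severi gives $g(I)\le m\,g(Y)+m+1$ while Hurwitz gives $g(I)\ge m(g(Y)-1)+1$, and these are compatible for every $m$, so no contradiction arises for $m\ge 2$. Second, and more seriously, the assertion ``$I\to Y$ is birational'' is false in general, so no argument can establish it. Take $X$ bielliptic with bielliptic involution $v$, $E=X/\langle v\rangle$, and a second involution $w$ of $X$ not commuting with $v$ (such pairs do occur on the curves of this paper; cf.\ Example \ref{exm:dontcommute}); let $Y=X/\langle w\rangle$ with $g(Y)\ge 3$ non-hyperelliptic and $f$ the quotient map. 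For generic $y\in Y$ the two points of $f^{-1}(y)$ have distinct $\pi$-images (they differ by $w\notin\{1,v\}$), and the divisors $f_*\pi^*e$ are pairwise distinct and pairwise linearly inequivalent precisely because $w$ does not normalize $\langle v\rangle$; hence the resulting elliptic curve $E'\subset Y^{(2)}$ has incidence curve of degree $2$ over $Y$, and in particular is not the pencil of fibres of any degree-$2$ map $Y\to E'$. The Proposition is still true for such $Y$, but the bielliptic or hyperelliptic structure must be produced by other means (in \cite{H-S}, via the elliptic curve $A\subseteq J(Y)$ with $A+c\subseteq W_2(Y)$ and the polarization it inherits), and that is the content your sketch is missing.
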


Thus it suffices to consider only those $N$ for which $X_0(N)$ is subhyperelliptic or bielliptic (or both).
Indeed, there are only finitely many such $N$ by Table \ref{classification} (see Appendix).
For these $N$ we list in Table \ref{lowgenus} all cases for which there is no intermediate $\Delta$
and all possible $\Delta$ for which $g_\Delta(N)\le 1$.
In Table \ref{highgenus} we list all $\Delta$ for these $N$ with $g_\Delta(N)\ge 2$ together with their genera, 
and we indicate in which part of the paper they are treated. 

Our main results are as follows:

\begin{Thm}\label{thm:bielliptic} The full list of bielliptic modular curves 
$X_\Delta(N)$ with $\{\pm 1\}\subsetneq\Delta\subsetneq (\Z/N\Z)^*$
is given by the following $25$ curves. Some of these curves have more bielliptic 
involutions than the ones we listed.

\begin{center}
\begin{longtable}{c|c|c}
\caption{List of all bielliptic $X_{\Delta}(N)$ and some of their bielliptic involutions}\\\abstand
$X_\Delta(N)$ & genus & some bielliptic involutions
 \\ \hline\abstand
 $X_{\Delta_1}(21)$ & $3$ & $\widehat{W}_{21}$,\ \ $[2]\widehat{W}_{21}$,\ \ 
$[4]\widehat{W}_{21}$
 \\ \hline\abstand
 $X_{\Delta_1}(24)$ & $3$ & $[7]$,\ \ $\widehat{W}_{24}$,\ \ 
$[7]\widehat{W}_{24}$,\ \ $\widehat{W}_8$,\ \ $[7]\widehat{W}_8$ 
 \\ \hline\abstand
 $X_{\Delta_2}(24)$ & $3$ & $[5]$,\ \ $\widehat{W}_{24}$,\ \ 
$[5]\widehat{W}_{24}$,\ \ $\widehat{W}_8$,\ \ $[5]\widehat{W}_8$ 
 \\ \hline\abstand
 $X_{\Delta_1}(26)$ & $4$ & $\widehat{W}_{26}$,\ \ $[3]\widehat{W}_{26}$,\ \ 
$[9]\widehat{W}_{26}$ 
 \\ \hline\abstand
 $X_{\Delta_2}(26)$ & $4$ & $\widehat{W}_{26}$, $[5]\widehat{W}_{26}$
 \\ \hline\abstand
 $X_{\Delta_1}(28)$ & $4$ & $\sm 1&0\\14&1\esm$,\ \ $\widehat{W}_{7}$,\ \ 
$[3]\widehat{W}_{7}$,\ \ $[9]\widehat{W}_{7}$ 
 \\ \hline\abstand
 $X_{\Delta_2}(28)$ & $4$ & $\widehat{W}_{7}$,\ \ $[11]\widehat{W}_{7}$
 \\ \hline\abstand
 $X_{\Delta_2}(29)$ & $4$ & $\widehat{W}_{29}$, $[2]\widehat{W}_{29}$
 \\ \hline\abstand
 $X_{\Delta_1}(30)$ & $5$ & $\widehat{W}_{15}$,\ \ $[7]\widehat{W}_{15}$
 \\ \hline\abstand
 $X_{\Delta_1}(32)$ & $5$ & $[7]$
 \\ \hline\abstand
 $X_{\Delta_2}(33)$ & $5$ & $\widehat{W}_{11}$,\ \ $[5]\widehat{W}_{11}$
 \\ \hline\abstand
 $X_{\Delta_2}(34)$ & $5$ & $\widehat{W}_2$
 \\ \hline\abstand
 $X_{\Delta_3}(35)$ & $7$ & $\widehat{W}_5$
 \\ \hline\abstand
 $X_{\Delta_4}(35)$ & $5$ & $\widehat{W}_{35}$,\ \ $[2]\widehat{W}_{35}$
 \\ \hline\abstand
 $X_{\Delta_2}(36)$ & $3$ & $[5]$,\ \ $\widehat{W}_{36}$,\ \ 
$[5]\widehat{W}_{36}$
 \\ \hline\abstand
 $X_{\Delta_3}(37)$ & $4$ & see Theorem \ref{thm:Aut37}
 \\ \hline\abstand
 $X_{\Delta_4}(39)$ & $5$ & $\widehat{W}_{39}$,\ \ $[2]\widehat{W}_{39}$
 \\ \hline\abstand
 $X_{\Delta_6}(40)$ & $5$ & $\sm 1&0\\20&1\esm$,\ \ $\sm -10&1\\-120&10\esm$,\ \ 
$[3]\sm -10&1\\-120&10\esm$
 \\ \hline\abstand
 $X_{\Delta_4}(41)$ & $5$ & $\widehat{W}_{41}$,\ \ $[3]\widehat{W}_{41}$
 \\ \hline\abstand
 $X_{\Delta_4}(45)$ & $5$ & $\widehat{W}_9$
 \\ \hline\abstand
 $X_{\Delta_6}(48)$ & $5$ & $\sm 1&0\\24&1\esm$,\ \ $\sm -6&1\\-48&6\esm$,\ \ 
$[5]\sm -6&1\\-48&6\esm$
 \\ \hline\abstand
 $X_{\Delta_2}(49)$ & $3$ & $\widehat{W}_{49}$, $[2]\widehat{W}_{49}$, $[4]\widehat{W}_{49}$
 \\ \hline\abstand
 $X_{\Delta_2}(50)$ & $4$ & $\widehat{W}_{50}$, $[3]\widehat{W}_{50}$
 \\ \hline\abstand
 $X_{\Delta_4}(55)$ & $9$ & $\widehat{W}_{11}$
 \\ \hline\abstand
 $X_{\Delta_3}(64)$ & $5$ & $\sm 1&0\\32&1\esm$
 \\ \hline
\end{longtable}
\end{center}
Throughout the paper the notation $X_{\Delta_i}(N)$ always refers to Tables \ref{lowgenus} and 
\ref{highgenus} in the Appendix, where the subgroups $\Delta_i$ are listed.
The notations for the bielliptic involutions are explained in Section ~\ref{sec:autos}.
\end{Thm}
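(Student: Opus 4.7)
The plan is to turn the classification into a finite case analysis. By Proposition \ref{image} applied to the natural cover $X_\Delta(N)\to X_0(N)$, a bielliptic $X_\Delta(N)$ can only occur for those $N$ such that $X_0(N)$ is subhyperelliptic or bielliptic, and by Table \ref{classification} this restricts $N$ to a known finite set. For each such $N$ I would enumerate the proper intermediate subgroups $\Delta$ listed in Tables \ref{lowgenus} and \ref{highgenus}, discard those with $g_\Delta(N)\le 1$ (since bielliptic requires $g\ge 2$), and then decide the bielliptic question for each remaining candidate.

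For the positive direction, i.e.\ the $25$ curves in the theorem's table, I would exhibit explicit involutions drawn from the available automorphisms of $X_\Delta(N)$: diamond operators $[a]$, Atkin--Lehner operators $\widehat{W}_d$ with $d\|N$, and occasional extra elements coming from matrices that normalise $\Gamma_\Delta(N)$ but not $\Gamma_1(N)$ (e.g.\ $\sm 1&0\\14&1\esm$ for $X_{\Delta_1}(28)$). For each candidate $v$ one computes the fixed-point set on the compactified curve by analysing the action on cusps and on elliptic points, and the Riemann--Hurwitz formula
\[
2g_\Delta(N)-2 \;=\; 2\bigl(2g(X_\Delta(N)/\langle v\rangle)-2\bigr)+\#\mathrm{Fix}(v)
\]
then forces the quotient genus. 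Where possible I would shortcut this by identifying $X_\Delta(N)/\langle v\rangle$ with a known modular curve via an explicit conjugation, reading off $g=1$ from standard genus tables rather than counting fixed points directly.

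For the negative direction, the genus $\ge 2$ candidates that do not appear in the list must be shown to admit no bielliptic involution at all. I would enumerate the involutions in the available part of $\mathrm{Aut}(X_\Delta(N))$, compute each quotient genus, and rule out $g=1$; when the full automorphism group is not in the literature, I would supplement the argument with a Jacobian decomposition via newforms (a bielliptic involution would force an elliptic isogeny factor of $\mathrm{Jac}(X_\Delta(N))$ with specific compatibility), and with Proposition \ref{image} applied to each proper subcover $X_\Delta(N)\to X_{\Delta'}(N)$ once the bielliptic status of the smaller curves has been settled.

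The main obstacle will be those curves whose automorphism group is not yet determined in the literature or contains elements not coming from the $\mathrm{GL}_2(\Q)^+$-normaliser of $\Gamma_\Delta(N)$. The abstract announces exactly one such exceptional case, namely $X_{\Delta_3}(37)$, handled by the separate Theorem \ref{thm:Aut37}; ensuring that no further hidden automorphisms create an undetected bielliptic involution---particularly for higher-genus entries such as $X_{\Delta_4}(55)$ or $X_{\Delta_3}(64)$---is where the uniform strategy above breaks down and where ad hoc arguments become unavoidable.
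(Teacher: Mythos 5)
The decisive gap is in your negative direction. Ruling out $g=1$ quotients for ``the involutions in the available part of $\mathrm{Aut}(X_\Delta(N))$'' proves nothing, because for almost all of these curves the full automorphism group is unknown, and an undetected exceptional involution could still be bielliptic; your own final paragraph concedes this without supplying a remedy. The supplementary ideas do not close the hole: an elliptic isogeny factor of the Jacobian is present for many non-bielliptic candidates (any curve covering an elliptic or hyperelliptic $X_0(N)$ already carries elliptic newform factors), so the Jacobian criterion cannot by itself exclude biellipticity, and Proposition \ref{image} applied to subcovers only propagates negativity downward once the maximal intermediate curves have been settled --- which is exactly the hard part. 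The paper's key tool, absent from your outline, is Castelnuovo's inequality in the form of Proposition \ref{prop:unique}: for $g\ge 6$ a bielliptic involution is unique, hence central in $\mathrm{Aut}$ and defined over $\Q$, hence descends to a hyperelliptic or bielliptic involution of $X_0(N)$ (Lemma \ref{galoiscover}), and the latter are completely known from Ogg and Bars. This converts the problem for every genus $\ge 6$ candidate into checking a short list of Atkin--Lehner type involutions, each excluded by the field of definition of the cusps (Lemmas \ref{lem:cusp1}, \ref{lem:cusp2}), by fixed-point counts in unramified covers (Lemma \ref{lem:fp-method}), or by exhibiting two distinct elliptic elements above the same $W_d$ (Lemma \ref{lem:elliptic}).

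For the low-genus candidates ($g\le 5$) the uniqueness statement fails and curve-specific arguments are unavoidable: the paper uses Castelnuovo's inequality directly on pairs of subfields (Lemma \ref{lem:castel-method}), Accola's lifting theorems, the characterization of Bring's curve to dispose of $X_{\Delta_1}(25)$, and for $X_{\Delta_4}(37)$ an explicit canonical model reduced modulo $5$ together with a machine count of $|\mathrm{Aut}_{\F_{5^k}}(\overline{C})|$ to show the $2$-Sylow subgroup has order $4$. None of this is recoverable from your outline. Your positive direction (explicit involutions plus Riemann--Hurwitz, or identification of the quotient with a known curve) is essentially what the paper does, although the paper replaces most fixed-point computations by Accola's structural results for double covers of hyperelliptic genus-$3$ curves and unramified degree-$3$ covers of genus-$2$ curves; the one entry your method would miss entirely is $X_{\Delta_3}(37)$, whose bielliptic involutions are all exceptional and are produced only by lifting the hyperelliptic involution of $X_0(37)$ through the unramified degree-$3$ cover.
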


\begin{Thm}\label{thm:bielliptic2} 
Let $\{\pm 1\}\subsetneq\Delta\subsetneq (\Z/N\Z)^*$.
Then the only modular curve $X_\Delta(N)$ of genus $>1$ which has infinitely many 
quadratic points over $\Q$ is the unique hyperelliptic curve $X_{\Delta_1}(21)$ 
where $\Delta_1 =\{\pm1,\pm 8\}$.
\end{Thm}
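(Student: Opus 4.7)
The plan is to apply Theorem \ref{thm:precise}, which asserts that a curve $X$ of genus $\geq 2$ over $\Q$ has infinitely many quadratic points over $\Q$ if and only if either $X$ is hyperelliptic over $\Q$, or $X$ admits a $\Q$-rational degree-$2$ map onto an elliptic curve $E/\Q$ of positive Mordell--Weil rank. By Proposition \ref{image}, any such $X_\Delta(N)$ is in particular hyperelliptic or bielliptic over $\overline\Q$, so it suffices to examine (i) the intermediate $X_\Delta(N)$ of genus $\geq 2$ which are hyperelliptic, and (ii) the $25$ bielliptic curves classified in Theorem \ref{thm:bielliptic}.

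For the hyperelliptic case, the classification encoded in Tables \ref{lowgenus} and \ref{highgenus} (combined with the existing lists of hyperelliptic $X_\Delta(N)$) shows that the only intermediate curve which is hyperelliptic of genus $\geq 2$ is $X_{\Delta_1}(21)$, of genus $3$. Since this curve has a $\Q$-rational cusp (Lemma \ref{lem:cuspsfieldofdef}) and the hyperelliptic involution is canonical, the hyperelliptic map is automatically defined over $\Q$; pulling back the infinitely many points of $\mathbb{P}^1(\Q)$ produces infinitely many $\Q$-quadratic points on $X_{\Delta_1}(21)$. This establishes the ``if'' half of the theorem.

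For the bielliptic case, I would check for each of the $25$ curves in Theorem \ref{thm:bielliptic} that no $\Q$-rational bielliptic involution $v$ yields an elliptic quotient $E_v = X_\Delta(N)/\langle v\rangle$ of positive $\Q$-rank. All involutions appearing in the table are visibly $\Q$-rational: the Atkin--Lehner operators $\widehat W_d$ and the diamond operators $[a]$ (with $a\in\Delta$) act over $\Q$ on $X_\Delta(N)$, and the explicitly exhibited integer matrices do so as well. For each such $v$, the elliptic quotient $E_v$ is a $\Q$-isogeny factor of $J_\Delta(N)$, and therefore corresponds to a modular elliptic curve of conductor dividing $N$, whose $\Q$-rank can be read off from Cremona's tables. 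Since the only levels that occur satisfy $N\leq 64$, a case-by-case inspection will confirm that every such $E_v$ has rank~$0$.

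The principal obstacle is the explicit identification of each $E_v$ inside the (less classically documented) Jacobian $J_\Delta(N)$: one must cut out the appropriate factor of $J_1(N)$ by $\Delta$-invariance and by the $+1$-eigenspace of $v$, reducing matters to newform/oldform data at level dividing $N$. A second subtlety is that Theorem \ref{thm:bielliptic} only lists \emph{some} bielliptic involutions, so one also needs the automorphism-group analysis of Section~\ref{sec:autos} to verify that no additional $\Q$-rational bielliptic involution (producing a distinct elliptic quotient) is missed. Once every such $E_v$ is seen to have Mordell--Weil rank $0$, the sharp direction of Theorem \ref{thm:precise} completes the argument.
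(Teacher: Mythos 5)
Your overall strategy coincides with the paper's: reduce via Proposition \ref{image} to the subhyperelliptic curves plus the $25$ bielliptic curves of Theorem \ref{thm:bielliptic}, handle $X_{\Delta_1}(21)$ as the unique hyperelliptic case, and then rule out the bielliptic ones by showing no elliptic quotient of positive $\Q$-rank exists, using the fact that such a quotient is an isogeny factor of the Jacobian and hence has conductor dividing $N$, which can be checked against Cremona's tables. The paper in fact makes this cleaner than you do: it does not need to identify the individual quotients $E_v$, nor to enumerate all $\Q$-rational bielliptic involutions (the two ``obstacles'' you raise), because the weaker necessary condition --- the Jacobian must contain \emph{some} positive-rank elliptic curve over $\Q$ of conductor dividing $N$ --- already fails for every level in the table except $N=37$.

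There is, however, a genuine gap at $N=37$, which is exactly the case your argument cannot dispatch. Your claim that ``a case-by-case inspection will confirm that every such $E_v$ has rank $0$'' is not something conductor considerations can deliver there: the Jacobian of $X_{\Delta_3}(37)$ contains $J_0(37)$, hence contains the rank-one elliptic curve $X_0^+(37)$ (conductor $37$), so the mere existence of a positive-rank isogeny factor does not rule this curve out, and identifying which isogeny class the bielliptic quotient lands in requires the full (exceptional) automorphism-group analysis of Theorem \ref{thm:Aut37} --- precisely the identification you defer. The paper closes this gap with a separate lemma: the only positive-rank elliptic factor $E$ of $J_1(37)$ is $X_0(37)/W_{37}$, its $\Q$-isogeny class is a singleton, and the natural map $X_{\Delta_3}(37)\to E$ has degree $6$; a degree-$2$ bielliptic map to $E$ would therefore force an endomorphism of $E$ of degree $3$, impossible since $E$ is non-CM (its $j$-invariant has a pole at $37$) so $\mathrm{End}(E)=\Z$. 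Without this (or an equivalent identification of the quotient as the rank-zero class $37B$), your proof is incomplete. A further, more minor, error: your assertion that the involutions $\widehat W_d$ are ``visibly $\Q$-rational'' contradicts Lemma \ref{Frickeinvolutions}, which shows $[a]\widehat W_N$ is \emph{never} defined over $\Q$ for proper $\Delta$; this mistake happens to point in the harmless direction (it would only make you check involutions that cannot contribute quadratic points over $\Q$ anyway), but it should be corrected.
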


We say a few words about the moduli problem described by 
the curves $X_{\Delta}(N)$.
Let $\{\pm1\}\subseteq \Delta\subseteq (\Z/N\Z)^*$. A non-cuspidal
$K$-rational point of $X_{\Delta}(N)$ corresponds to an isomorphism 
class of an elliptic curve $E$ over $K$ (in short Weierstrass form
$y^2 =x^3 +Ax+B$) and a primitive $N$-torsion point $P$ of $E$ such 
that the set $\{aP\ :\ a\in\Delta\}$ is Galois stable under 
$Gal(\overline{K}/K)$.
\par
If $\Delta=\{\pm 1\}$, this means that the $x$-coordinate $x(P)$ of 
$P$ is in $K$. Consequently $(y(P))^2\in K$. We can take a quadratic 
twist of $E$ over $K$ that multiplies $x$ with $(y(P))^2$ and $y$
with $(y(P))^3$ and obtain an elliptic curve $E'$ over $K$ with
a $K$-rational $N$-torsion point.
\par
For general $\Delta$, since $Gal(\overline{K}/K)$ cannot map $P$ out
of $\Delta$ and since $P$ and $-P$ have the same $x$-coordinate, $x(P)$
has degree at most $\frac{1}{2}|\Delta|$ over $K$ (provided $N>2$). 
\par 
Conversely, if the degree of $x(P)$ over $K$ is even smaller than
$\frac{1}{2}|\Delta|$, then the set of all $n$ in $(\Z/N\Z)^*$ such 
that $nP$ is an image of $P$ under the action of $Gal(\overline{K}/K)$
and $P\mapsto -P$ forms a $Gal(\overline{K}/K)$-stable proper subgroup 
$\widetilde{\Delta}$ of $\Delta$, giving even rise to a $K$-rational 
point on the curve $X_{\widetilde{\Delta}}(N)$ that covers $X_{\Delta}(N)$.
\par 
Now let $L=K(x(P))$. As above,
we can twist $E$ over $L$ and get an elliptic curve $E'$ over $L$ 
with $L$-rational $N$-torsion point.
\par
But if $\Delta$ is bigger than $\{\pm 1\}$, in general we cannot 
get an elliptic curve $E''$ over $K$ with an $N$-torsion point
of degree $\leq \frac{1}{2}|\Delta|$ over $K$.

\begin{Exm}
The curve $X_{\Delta_1}(17)$ where $\Delta_1 =\{\pm 1,\pm 4\}$ 
has genus $1$ and hence infinitely many quadratic points. 
A non-cuspidal one corresponds to an elliptic curve $E$ over 
a quadratic number field $K$ with a $17$-torsion point $P$ 
whose $x$-coordinate has degree $2$ over $K$.
\par
Suppose we could construct from this an elliptic curve $E'$ over 
a quadratic number field $L$ with a $17$-torsion point $Q$ that
is quadratic over $L$. As the automorphism group of $(\Z/17\Z)^*$ 
is cyclic, the non-trivial Galois automorphism of $L(Q)/L$ can map
$Q$ only to $-Q$. But then $(E',Q)$ would correspond to a non-cuspidal
$L$-rational point on $X_1(17)$, which by \cite[Theorem 3.1]{Ka1} is 
known not to exist.
\par
Or interpreted differently, then we could twist $E'$ and get an
elliptic curve $E''$ over the quadratic number field $L$ with
an $L$-rational $17$-torsion point, which is known not to exist
(\cite[Theorem 3.1]{Ka1}).
\end{Exm}

\begin{Exm}
The curve $X_{\Delta_1}(21)$ with $\Delta_1 =\{\pm 1,\pm 8\}$ 
has infinitely many quadratic points over $\Q$. 
So there are infinitely many elliptic curves $E$ over quadratic number 
fields $K$ (depending on $E$) with a $K$-rational $21$-isogeny containing
a $K$-rational $7$-torsion point. Equivalently, we can say that there are
infinitely many elliptic curves $E$ over quadratic number fields $K$ that
have a $K$-rational $7$-torsion point and a $K$-rational $3$-isogeny.
\par
As by \cite[Theorem 2.1]{K-M} the curve $X_1(21)$ is known not to have any 
quadratic points outside the cusps, the underlying $3$-torsion point cannot 
be $K$-rational. Of course, taking a suitable twist of $E$ will make the 
$3$-torsion point $K$-rational, but then the ($y$-coordinate of the) 
$7$-torsion point will no longer be $K$-rational.
\par
Group-theoretically the underlying feature is that the intersection 
of $\pm\Gamma_1 (7)$ and $\pm\Gamma_1 (3)$ is $\Gamma_{\Delta_1}(21)$,
and not $\pm\Gamma_1 (21)$.
\end{Exm}

\begin{Exm}
By \cite[Theorem 4.3]{B1} the curve $X_0(61)$ has infinitely many 
quadratic points over $\Q$. This means that there are infinitely 
many elliptic curves $E$ over quadratic number fields $K$ (depending 
on $E$) with a $K$-rational $61$-isogeny. (Infinitely many here means 
with infinitely many different $j$-invariants.)
\par
However, none of the six intermediate curves $X_{\Delta}(61)$ has 
infinitely many quadratic points. So, as explained after Theorem \ref{thm:bielliptic2},
for almost all of these $j$-invariants the $x$-coordinate of the underlying $61$-torsion 
point of $E$ will generate an extension of $K$ of degree $30$.
\end{Exm}

\begin{Exm}
Fix a number field $F$. As the curve $X_0(41)$ is hyperelliptic, it 
has infinitely many quadratic points over $F$. So as $K$ varies over 
all quadratic extensions of $F$, there will be in total infinitely 
many elliptic curves $E$ over $K$ with a $K$-rational $41$-isogeny.
The $x$-coordinate of the underlying $41$-torsion point generates 
an extension of degree $1$, $2$, $4$, $5$, $10$ or $20$ of $K$,
depending on whether above our $K$-rational point on $X_0(41)$ there 
lies a $K$-rational point on $X_1(41)$, $X_{\Delta_i}(41)$ ($i=1,2,3,4$),
or not.  
\par 
For example, if $F$ contains $\Q(\sqrt{41})$ (then the involution 
$\widehat{W}_{41}$ of $X_{\Delta_4}(41)$ is defined over $F$ by Lemma 
\ref{Frickeinvolutions}) and if the elliptic curve 
$X_{\Delta_4}(41)/\widehat{W}_{41}$ has positive rank over $F$, then 
$X_{\Delta_4}(41)$ has infinitely many quadratic points over $F$, and 
for these there are elliptic curves with the $x$-coordinate of the 
$41$-torsion point lying already in a degree $10$ extension of $K$. 
\par
But no matter what $F$ is, there will always only be finitely many 
cases for which the $x$-coordinate of the $41$-torsion point generates 
an extension of $K$ of degree less than $10$. This is because none of 
the other intermediate curves $X_{\Delta}(41)$ is subhyperelliptic or 
bielliptic. So over any number field $F$ they will always have only 
finitely many quadratic points.
\end{Exm}

Recently \cite{Ze} described the normalizers of $\Gamma_{\Delta}(N)$ 
in $SL_2(\R)$ for several types of $\Delta$, 
which furnishes already quite a range of automorphisms
of $X_{\Delta}(N)$. Any automorphism that is not of this form is 
called {\it exceptional}. 
\par
The unpublished preprint \cite{Mo} states that if $N$ is 
square-free then $X_{\Delta}(N)$ has no exceptional automorphisms,
except for the well-known curve $X_0(37)$. However, deciding the
biellipticity of $X_{\Delta_3}(37)$ where 
$\Delta_3 =\{\pm 1,\pm 6,\pm 8,\pm 10,\pm 11,\pm 14\}$ revealed
that this curve also has exceptional automorphisms 
(see Lemma \ref{lem:37biell} and Theorem \ref{thm:Aut37} below). 
\par
As exactly the curve $X_{\Delta_3}(37)$ requires lengthy extra 
treatment in \cite{Mo}, it is quite likely that the mistake
occurs there and that the rest of \cite{Mo} is probably correct.
But checking this would require more algebraic geometry than we 
are comfortable with. 
\par
It might seem unfair that we elaborate on this mistake. After
all, \cite{Mo} is an unpublished preprint. But some of the proofs 
in the paper \cite{I-M} by Ishii and Momose use results from 
\cite{Mo}; so their correctness is an issue. And on the other
hand, after eliminating the mistake in \cite{Mo} one would have
a result that is much more general than the currently established 
ones. The paper \cite{Ka2} determines the automorphism group of
$X_\Delta(N)$ for $N$ a prime bigger than $311$ (which is too big 
to be of any help to us). Both, \cite{Ka2} itself and its 
review in MathSciNet suggest that it should be possible to
generalize to the case of square-free $N$.
\par
We want to emphasize that our results in this paper do not depend on \cite{Mo} or \cite{I-M}. 
This paper is organized as follows. 
\par
In Section~\ref{sec:autos} we have a more detailed look at two types
of automorphisms of $X_{\Delta}(N)$, namely those from the Galois group
of $X_{\Delta}(N)$ over $X_0(N)$ and, more interestingly, the possible
lifts of the Atkin-Lehner involutions to $X_{\Delta}(N)$. These can 
behave quite differently from the Atkin-Lehner involutions on $X_0(N)$.
\par
Section~\ref{sec:Atkin} presents a systematic method to find all the fixed points on $X_0(N)$ (resp.  
$X_\Delta(N)$) by Atkin-Lehner involutions (resp. possible lifts of Atkin-Lehner involutions).
Formulas for the number of fixed points on $X_0(N)$ were already known to Newman and Ogg.
Also Delaunay \cite{De} suggested a method to find all the fixed points.
In fact, he gave an algorithm to give all the candidates for the fixed points, 
but didn't explain how to choose the exact fixed points among them explicitly.
\par
In Section~\ref{sec:non} we exclude all the non-bielliptic curves 
$X_\Delta(N)$ by using various criteria, and in Section~\ref{sec:bielliptic} 
we show that the remaining $X_\Delta(N)$ are bielliptic curves,
so that Theorem \ref{thm:bielliptic} is proved.
\par
Lastly, in Section~\ref{sec:quadratic} we prove Theorem \ref{thm:bielliptic2}, i.e., we find all the curves 
$X_\Delta(N)$ which have infinitely many quadratic points over $\mathbb Q$.


\section{Automorphisms}\label{sec:autos}

In this section we describe some automorphisms of $X_\Delta(N)$.
We mainly need two types of non-exceptional automorphisms, that
are easy to handle.
\par
Note that $X_\Delta(N)\to X_0(N)$ is a Galois covering with Galois
group $\Gamma_0(N)/\Gamma_\Delta(N)$ which is isomorphic to
$(\Z/N\Z)^*/\Delta$. 
For an integer $a$ prime to $N,$ let $[a]$ denote the automorphism
of $X_\Delta(N)$ represented by $\gamma\in\Gamma_0(N)$ such that
$\gamma\equiv\sm a&*\\0&*\esm\mod N.$ Sometimes we regard $[a]$ as
a matrix.
\par 
Before we come to the second type of automorphisms, we first say something
about the cusps of $X_\Delta(N)$, which will be used several times in the paper.
\par 
Let $X(N)$ be the modular curve defined over $\mathbb Q$
associated to the modular group $\Gamma(N):$
$$\Gamma(N)=\left\{{\begin{pmatrix}a&b\\c&d\end{pmatrix}}
\in\Gamma\,|\,a\equiv d\equiv 1\,\,{\rm mod}\,\, N, b\equiv c\equiv 0\,\,{\rm mod}\,\, N,\right\}.$$
By virtue of \cite{O1} the cusps of $X(N)$ can be regarded as pairs $\pm\sm
x\\y\esm$, where $x,y\in {\mathbb Z}/{N\mathbb Z},$ and are
relatively prime, and $\sm x\\y\esm$, $\sm -x\\-y\esm$ are
identified; $\Gamma_\Delta/{\Gamma(N)}$ operates naturally on the left,
and so a cusp of $X_\Delta(N)$ can be regarded as an orbit
of $\Gamma_\Delta(N)/{\Gamma(N)}$.
They are all defined over $\Q(\zeta_N)$ or subfields thereof and 
$Gal(\Q(\zeta_N)/\Q)=(\Z/N\Z)^*$ acts on them by 
$\sigma {x\choose y}={\sigma x\choose y}$. 

\begin{Lem}\cite[Lemma 1.2]{I-M}\label{lem:cuspsfieldofdef}
\begin{itemize}
	\item[(a)] Let $d$ be a divisor of $N$ such that $d$ and $N/d$ are
	relatively prime. Set
	$$\Delta^{(d)}=\{a\ mod\ d\ :\ a\in\Delta,\ a\equiv 1\ mod\ N/d\}.$$
	Then the field of definition of the cusps of $X_\Delta(N)$ lying above
	the cusp ${i\choose d}$ of $X_0(N)$ is $\Q(\zeta_d)^{\Delta^{(d)}}$, the 
	fixed field of $\Delta^{(d)}\subseteq (\Z/d\Z)^* =Gal(\Q(\zeta_d)/\Q)$
	in $\Q(\zeta_d)$.
	\item[(b)] The cusps of $X_\Delta(N)$ above the cusp $0$ ($={0\choose 1}={1\choose 1}$)
	of $X_0(N)$ are always $\Q$-rational.
	\item[(c)] The field of definition of the cusps of $X_\Delta(N)$ above the cusp 
	$\infty$ ($={1\choose 0}={1\choose N}$) of $X_0(N)$ is $\Q(\zeta_N)^{\Delta}$. 
	In particular, they are never rational over $\Q$ (if $\Delta\subsetneq(\Z/N\Z)^*$).
\end{itemize}
\end{Lem}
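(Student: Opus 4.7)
The plan is to work one level up in the tower by pulling everything back to the cusps of $X(N)$, where cusps have the clean description $\pm{x\choose y}$, the Galois action is $\sigma{x\choose y}={\sigma x\choose y}$, and the $\Gamma_\Delta(N)/\Gamma(N)$ action is just matrix multiplication modulo $N$. Part (a) carries the main content; (b) and (c) will fall out as the specialisations $d=1$ and $d=N$.

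First, parametrise the cusps of $X(N)$ lying over ${i\choose d}$ in $X_0(N)$. Writing $N=dm$ with $\gcd(d,m)=1$, a full set of representatives may be taken in the form ${\alpha\choose d}$ with $\alpha$ a unit modulo $d$; this uses the standard fact that the $\Gamma_0(N)$-class of ${x\choose y}$ is determined by $\gcd(y,N)$ together with a residue datum that is trivial in our coprime situation. A matrix $\sm a&b\\Nc&d'\esm\in\Gamma_\Delta(N)$ then sends ${\alpha\choose d}$ to ${a\alpha+bd\choose a^{-1}d}\pmod N$, since $ad'\equiv 1\pmod N$ forces $d'd\equiv a^{-1}d$. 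Thus the full $\Gamma_\Delta(N)/\Gamma(N)$-orbit is the set of $\pm{a\alpha+bd\choose a^{-1}d}$ as $a$ ranges over $\Delta$ and $b$ over $\Z$.

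The core calculation is to determine which $\sigma\in(\Z/N\Z)^*$ stabilise this orbit. Applying $\sigma$ yields ${\sigma\alpha\choose d}$, and the requirement that this equal $\pm{a\alpha+bd\choose a^{-1}d}$ for some admissible $a,b$ splits via CRT into mod-$d$ and mod-$m$ conditions. Comparing second coordinates mod $m$, where $d$ is a unit, forces $a\equiv\pm 1\pmod m$; since $-1\in\Delta$ the sign can be absorbed into the matrix action, so we may assume $a\equiv 1\pmod m$. The first coordinate mod $m$ is then freely adjustable via $b$ (again because $d$ is a unit mod $m$), while the first coordinate mod $d$ yields $\sigma\equiv a\pmod d$ (because $\alpha$ is a unit mod $d$). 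Hence $\sigma$ stabilises the orbit exactly when $\sigma\bmod d\in\Delta^{(d)}$. The stabiliser in $(\Z/N\Z)^*$ is therefore the preimage of $\Delta^{(d)}$ under reduction mod $d$, whose fixed field inside $\Q(\zeta_N)$ is $\Q(\zeta_d)^{\Delta^{(d)}}$, proving (a). Parts (b) and (c) follow at once: for $d=1$ the group $\Delta^{(1)}$ is trivial in $\{1\}$ and the field is $\Q$; for $d=N$ one has $\Delta^{(N)}=\Delta$ and the field is $\Q(\zeta_N)^\Delta$, which equals $\Q$ only when $\Delta=(\Z/N\Z)^*$.

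The most delicate point is the $\pm$-bookkeeping: one must verify that the sign produced by comparing second coordinates mod $m$ can always be absorbed into the matrix action rather than defining a genuine extra stabiliser. This is exactly what the standing assumption $-1\in\Delta$ takes care of; without it, the group $\Delta^{(d)}$ would need to be enlarged by the mod-$d$ reduction of $-1$.
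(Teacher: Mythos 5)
The paper does not actually prove this lemma: part (a) is quoted verbatim from Ishii--Momose \cite[Lemma 1.2]{I-M}, and (b), (c) are noted to be special cases of (a) or to follow from \cite[Proposition 1]{O1}. So your argument is by necessity a different route: a direct computation of the Galois stabilisers of the $\Gamma_\Delta(N)$-orbits of cusps of $X(N)$, using exactly the description ($\pm\sm x\\y\esm$, action $\sigma\sm x\\y\esm=\sm \sigma x\\y\esm$) that the paper recalls just before the lemma. The core of the computation is correct: splitting the congruences mod $d$ and mod $m=N/d$, absorbing the sign via $-1\in\Delta$, and identifying the stabiliser as the preimage of $\Delta^{(d)}$ under reduction mod $d$, with fixed field $\Q(\zeta_d)^{\Delta^{(d)}}$. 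Parts (b) and (c) do follow as the specialisations $d=1$ and $d=N$.

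There is, however, one false intermediate claim. The cusps of $X_\Delta(N)$ above $\sm i\\d\esm$ are \emph{not} all represented by vectors $\sm\alpha\\d\esm$ with $\alpha$ a unit mod $d$. Under $\sm a&b\\Nc&d'\esm\in\Gamma_\Delta(N)$ the second coordinate transforms as $y\mapsto a^{-1}y\pmod N$ with $a\in\Delta$, so from second coordinate $d$ one only reaches second coordinates $y$ with $y\equiv a^{-1}d\pmod{N/d}$ for some $a\in\Delta$. If $\Delta$ does not surject onto $(\Z/(N/d)\Z)^*$ --- e.g.\ $N=35$, $d=7$, $\Delta=\{\pm1,\pm6\}$, which reduces to $\{\pm1\}$ mod $5$, so that $y=14$ or $y=21$ is never reached from $y=7$ --- then entire $\Gamma_\Delta(N)$-orbits with $\gcd(y,N)=d$ are missed, and your argument literally computes the field of definition of only some of the cusps above $\sm i\\d\esm$. (The ``standard fact'' you invoke pins down the $\Gamma_0(N)$-class, not the $\Gamma_\Delta(N)$-class.) The repair is immediate: run the identical computation on a general representative $\sm x\\y\esm$ with $\gcd(y,N)=d$. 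Writing $y\equiv 0\pmod d$ and $y\equiv v\pmod{N/d}$ with $v$ a unit, the second-coordinate comparison still forces $a\equiv\pm1\pmod{N/d}$ because $v$ is a unit, the first coordinate mod $N/d$ is still freely adjustable by $b$, and mod $d$ one still gets $\sigma\equiv\pm a$ because $\gcd(x,d)=1$; so the stabiliser, hence the field of definition, is the same for every cusp above $\sm i\\d\esm$. With that adjustment the proof is complete.
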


\begin{proof}
(a) \cite[Lemma 1.2]{I-M}. \\
(b), (c) special cases of (a) or directly from \cite[Proposition 1]{O1}.	
\end{proof}

\cite[Lemma 1.2]{I-M} also describes the field of definition of the cusps above
${i\choose d}$ if $d$ and $N/d$ are not coprime. This is slightly more complicated 
and will not be needed in our paper.

\begin{Rk}
Note that although $\Delta$ contains $\pm 1$, the field of definition of the cusp 
${1\choose d}$ need not be contained in the real subfield of $\Q(\zeta_d)$.
For example the cusp ${1\choose 3}$ of the curve $X_1(15)$ is defined over $\Q(\zeta_3)$,
because $\Delta=\{\pm 1\}$ and $\Delta^{(3)}=\{1\}$. 
\end{Rk}

For each divisor $d|N$ with $(d,N/d)=1,$ consider the matrices of
the form $\begin{pmatrix} dx & y\\Nz & dw \end{pmatrix}$ with
$x,y,z,w\in\mathbb Z$ and determinant $d.$ Then these matrices
define a unique involution of $X_0(N)$, which is called the 
{\it Atkin-Lehner involution} and denoted by $W_d$. In particular, 
if $d=N,$ then $W_N$ is called the {\it Fricke involution.}
We also denote by $W_d$ a matrix of the above form.

If we fix a matrix $W_d$ then $W_d$ may not belong to the
normalizer of $\Gamma_\Delta(N)$ in ${\rm PSL}_2(\mathbb R)$ and might therefore not define an automorphism of $X_\Delta(N)$.
Now we will find a criterion for which $W_d$ defines an automorphism on $X_\Delta(N)$.
Each $\gamma\in\Gamma_\Delta(N)$ is of the form
$$\begin{pmatrix} a & b\\c & \overline{a} \end{pmatrix}$$
where $(a\,\, {\rm mod}\,\, N)\in\Delta$ and $\overline{a}$ is an integer with $a\overline{a}\equiv1\,\,({\rm mod}\,N)$.
For $W_d=\begin{pmatrix} dx & y\\Nz & dw \end{pmatrix}$ and
$\gamma=\begin{pmatrix} a & b\\c & \overline{a} \end{pmatrix}\in\Gamma_\Delta(N)$,
one can easily compute that $W_d\gamma W_d^{-1}\in\Gamma_\Delta(N)$ 
if and only if the following condition holds:
\begin{equation}\label{eq:nor}
dxwa-\frac{N}{d}yz\overline{a}\in\Delta.
\end{equation}
Since $d^2xw-Nyz=d$, we have $dxw-\frac{N}{d}yz=1$, and hence the following 
holds:
\begin{equation*}\label{eq:condition}
dxwa-\frac{N}{d}yz\overline{a}\equiv\left\{\begin{array}{l}a\,\,({\rm mod}\,\frac{N}{d}),\\
\overline{a}\,\,({\rm mod}\,d).\end{array}\right.
\end{equation*}
Note that $\overline{a}$ is the multiplicative inverse of $a$ modulo $d$.
Now we define an isomorphism $t_d:(\Z/N\Z)^*\to(\Z/N\Z)^*$ by
$$t_d(a)\equiv\left\{\begin{array}{l}a\,\,({\rm mod}\,\frac{N}{d}),\\
\overline{a}\,\,({\rm mod}\,d).\end{array}\right.$$
Since $(\Z/N\Z)^*$ is isomorphic to the direct product $(\Z/d\Z)^*\times(\Z/\frac{N}{d}\Z)^*$, one can show that
the conditon (\ref{eq:nor}) holds if and only if $t_d(a)\in\Delta$.
Therefore we have the following result:

\begin{Lem}\label{thm:whenaut}
A matrix $W_d$ defines an automorphism of $X_\Delta(N)$ if and only if
$t_d(\Delta)=\Delta$.
\end{Lem}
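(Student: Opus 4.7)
The plan is to translate ``$W_d$ defines an automorphism of $X_\Delta(N)$'' into the normalizer condition $W_d\Gamma_\Delta(N)W_d^{-1}=\Gamma_\Delta(N)$ and then read off the equivalence directly from the explicit computation of the upper-left entry of $W_d\gamma W_d^{-1}$ that was carried out in the paragraphs immediately preceding the lemma. So the substance of the proof is really contained in the discussion above; the lemma merely packages it.

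First I would note that $W_d$ is already known to normalize $\Gamma_0(N)$, so for every $\gamma=\sm a&b\\c&\overline a\esm\in\Gamma_\Delta(N)\subseteq\Gamma_0(N)$ the conjugate $W_d\gamma W_d^{-1}$ automatically sits inside $\Gamma_0(N)$. Membership in the finer subgroup $\Gamma_\Delta(N)$ is therefore controlled entirely by the upper-left entry modulo $N$, and the calculation already performed identifies this entry, via the Chinese remainder decomposition $(\Z/N\Z)^*\cong(\Z/d\Z)^*\times(\Z/(N/d)\Z)^*$ (valid because $\gcd(d,N/d)=1$), with $t_d(a)$. Letting $\gamma$ range over all of $\Gamma_\Delta(N)$ lets $a$ range over all of $\Delta$, so the condition $W_d\Gamma_\Delta(N)W_d^{-1}\subseteq\Gamma_\Delta(N)$ is equivalent to $t_d(\Delta)\subseteq\Delta$.

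Finally I would upgrade one-sided containment to equality. Since $t_d$ restricts to inversion on the $(\Z/d\Z)^*$-factor and to the identity on the $(\Z/(N/d)\Z)^*$-factor, it is an involution of the finite group $(\Z/N\Z)^*$, hence a bijection; therefore $t_d(\Delta)\subseteq\Delta$ already forces $t_d(\Delta)=\Delta$. Applying the same computation to $W_d^{-1}$ (equivalently, using that $W_d$ induces an involution on $X_0(N)$, so $W_d^2$ lies in $\Gamma_0(N)$ up to scalars) then gives the reverse inclusion $\Gamma_\Delta(N)\subseteq W_d\Gamma_\Delta(N)W_d^{-1}$, yielding normalization and hence a well-defined automorphism of $X_\Delta(N)=\Gamma_\Delta(N)\backslash\mathbb H^*$. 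The only real ``obstacle'' is the conceptual step of identifying the descended-automorphism condition with the normalizer condition; everything else is routine bookkeeping already recorded in the paragraphs preceding the lemma.
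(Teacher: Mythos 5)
Your proposal is correct and follows essentially the same route as the paper: the paper's ``proof'' is precisely the computation in the paragraphs preceding the lemma, identifying the upper-left entry of $W_d\gamma W_d^{-1}$ with $t_d(a)$ via the Chinese remainder decomposition and reading off the normalizer condition. Your additional care in upgrading $t_d(\Delta)\subseteq\Delta$ to equality via bijectivity of $t_d$, and in checking the reverse inclusion needed for an automorphism (rather than just a well-defined self-map), only makes explicit steps the paper leaves implicit.
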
 

\begin{Rk} If $W_d$ defines an automorphism of $X_\Delta(N)$ then so does 
$W_{\frac{N}{d}}$ because $t_{\frac{N}{d}}=t_d^{-1}$. 
\par
Moreover, if $a$ is running through $(\Z/N\Z)^*/\Delta$, then $[a]W_d$
gives the different automorphisms of $X_{\Delta}(N)$ that induce the same
Atkin-Lehner involution $W_d$ on $X_0(N)$.
\end{Rk}

\begin{Exm}\label{exm:Wnotaut}
Let $N=65$. Then $t_5$ interchanges $\Delta_1 =\{\pm 1, \pm 8\}$ and 
$\Delta_3 =\{\pm 1, \pm 18\}$. Correspondingly, matrices $W_5$ do not give 
automorphisms of $X_{\Delta_1}(65)$; rather they give isomorphisms from
$X_{\Delta_1}(65)$ to $X_{\Delta_3}(65)$.
\end{Exm}

\begin{Exm}\label{exm:Wnotinvol}
Even for square-free $N$ it can happen that matrices $W_d$ give automorphisms
of $X_{\Delta}(N)$, but none of these automorphisms is an involution.
\par 
Take for example $N=65$ and $\Delta_2 =\{\pm 1, \pm 14\}$. Then any matrix
$W_5=\begin{pmatrix} 5x & y\\65z & 5w \end{pmatrix}$ with determinant 
$5$ gives an automorphism of $X_{\Delta_2}(N)$ by Lemma \ref{thm:whenaut}. 
However, $\frac{1}{5}W_5^2$ is a matrix in $\Gamma_0(65)$ with upper 
left entry $5x^2 +13yz$, which cannot be congruent to $\pm 1$ or 
$\pm 14$ modulo $65$, as $\pm 5$ is not a square modulo $13$. 
So $W_5^2$ gives a non-trivial automorphism of $X_{\Delta_2}(65)$.
\par
By the way, the same then holds as well for the automorphisms 
$W_5$ of $X_1(65)$.
\end{Exm}

Now we derive some conditions under which an automorphism of
$X_{\Delta}(N)$ coming from a matrix $W_d$  necessarily is an involution. 
\par
Assume that such automorphism has a non-cuspidal fixed point on $X_{\Delta}(N)$.
Then we can multiply $W_d$ on the left by a matrix from 
$\Gamma_{\Delta}(N)$ without changing the automorphism such that the 
new matrix actually has a fixed point on the upper half plane 
$\mathbb H$.
Note that one can find conditions for an involution $W_d$ on $X_0(N)$ to have non-cuspidal 
fixed points in \cite [p.454]{O2} and \cite[Proposition 3.8]{I-J-K}. Because of the cuspidal 
fixed point of $W_4$, statement (1) of \cite[Proposition 3.8]{I-J-K} should however be corrected
to "$W_Q$ has a non-cuspidal fixed point on $X_0(N)$".
\par
We recall a notion which plays a crucial role in this paper.
\begin{Def}\cite{Sh} A matrix $A\in {\rm GL}_2^+(\R)$ is called an {\it elliptic element} 
if it is conjugate to
$$\begin{pmatrix} \lambda & 0\\ 0 & \bar\lambda \end{pmatrix},\,\, \lambda \notin \R.$$
\end{Def}
It is well-known that an elliptic element $A$ has a fixed point on $\mathbb H$, which is equivalent to 
$|{\rm tr}(A)|^2<4\det(A)$ \cite[Propositions 1.12 and 1.13]{Sh}.
\par
So if $d>3$ and a matrix $W_d$ has a fixed point on $\mathbb H$, this forces
${\rm tr}(W_d)=0$. In particular, then $W_d$ defines an involution on $X_\Delta(N)$.
Moreover, if ${\rm tr}(W_d)=0$, from $\det(W_d)=d$ we get
$-dx^2 -\frac{N}{d}yz=1$; so $-d$ necessarily is congruent 
to a square modulo $\frac{N}{d}$.

Conversely, suppose the following conditions hold:\\ \\
for $d\neq 2,3$:
\begin{enumerate}
\item[(i)] $t_d(\Delta)=\Delta$ and
\item[(ii)] $-d$ is congruent to a square modulo $\frac{N}{d}$.
\end{enumerate}
\bigskip
\noindent 
for $d=2$ or $3$:
\begin{enumerate}
\item[(i)] $t_d(\Delta)=\Delta$ and
\item[(ii)$'$] $dx(t-x) \equiv 1\,\, ({\rm mod}\,\, N/d)$
has a solution for some $t\in \{0,1,-1\}$.
\end{enumerate}

\bigskip 
\noindent 
First consider the case $d\neq 2,3$.
Then there exist $x,y$ satisfying $-dx^2-y\frac{N}{d}=1$, and for uniqueness if we set $x_0$ to be the smallest nonnegative such integer and $y_0=\frac{-d x_0^2 -1}{N/d}$ then the matrix
\begin{equation}\label{eq:wd}
\widehat{W}_d=\begin{pmatrix}dx_0&y_0\\N&-dx_0\end{pmatrix}
\end{equation}
defines an involution on $X_\Delta(N)$, and it has a non-cuspidal
fixed point on $X_\Delta(N)$.
Note that the condition (ii) is equivalent to the existence of an elliptic 
element $W_d$.

For the case $d=2$ or $3$, we encounter some different situation.
One can check easily that the condition (ii)$'$ is equivalent to the existence of an elliptic element $W_d$.
But in this case, such $W_d$ may not define an involution on $X_\Delta(N)$.
We choose $\widehat{W}_d$ a matrix of the form \eqref{eq:wd} if it exists, otherwise we can choose $\widehat{W}_d$ uniquely of the form
\begin{equation}\label{eq:wd2}
\widehat{W}_d=\begin{pmatrix}dx_0&y_0\\N&d(t-x_0)\end{pmatrix}
\end{equation}
with the smallest nonnegative integer $x_0$ satisfying (ii)$'$. 

In the above, we use the notation $\widehat{W}_d$ to distinguish it from the Atkin-Lehner involution on $X_0(N)$. Moreover $\widehat{W}_d$ means the matrix of the above form \eqref{eq:wd} or \eqref{eq:wd2}, or the automorphism on $X_\Delta(N)$ defined by such a matrix. 
\par 

In particular, we always have 
$\widehat{W}_N=\begin{pmatrix} 0 & -1\\N & 0 \end{pmatrix}$. 

\begin{Lem}\label{Frickeinvolutions}
The matrix 
$\widehat{W}_N=\begin{pmatrix} 0 & -1\\N & 0 \end{pmatrix}$
normalizes the Galois group of $X_{\Delta}(N)$ over $X_0(N)$ with 
$\widehat{W}_N[a]\widehat{W}_N^{-1}=[\overline{a}]$.
So each automorphism $[a]\widehat{W}_N$ of $X_{\Delta}(N)$ is an involution.
\par 
Moreover, the field of definition for any automorphism $[a]\widehat{W}_N$ of 
$X_{\Delta}(N)$ is the fixed field $\Q(\zeta_N)^{\Delta}$ of 
$\Delta\subseteq (\Z/N\Z)^* =Gal(\Q(\zeta_N)/\Q)$ in $\Q(\zeta_N)$.
In particular, $[a]\widehat{W}_N$ is never defined over $\Q$ 
(when $\Delta\subsetneq (\Z/N\Z)^*$).
\end{Lem}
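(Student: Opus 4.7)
The plan is to split the lemma into three pieces, handled in turn by (i) a direct matrix computation for the conjugation formula, (ii) the observation $\widehat{W}_N^2=-NI$ to get the involution property, and (iii) a Galois-descent argument using the $\mathbb{Q}$-rationality of $W_N$ on $X_0(N)$ and of the diamond operators.

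For (i), take $\gamma\in\Gamma_0(N)$ representing $[a]$, so $\gamma\equiv\sm a&*\\0&\overline{a}\esm\pmod{N}$, and directly multiply out:
\begin{equation*}
\widehat{W}_N\gamma\widehat{W}_N^{-1}=\sm d&-c\\-Nb&a\esm\equiv\sm\overline{a}&*\\0&a\esm\pmod{N},
\end{equation*}
which lies in $\Gamma_0(N)$ and represents $[\overline{a}]$. Combined with $\widehat{W}_N^2=-NI$ (a scalar, hence trivial on $X_\Delta(N)$) and $[a][\overline{a}]=[1]$, one gets $([a]\widehat{W}_N)^2=[a\overline{a}]\cdot\mathrm{id}=\mathrm{id}$, settling (ii).

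For (iii), I would use that the Galois covering $X_\Delta(N)\to X_0(N)$ is defined over $\mathbb{Q}$ with $\mathbb{Q}$-rational deck transformations $[a]$ and that $W_N$ on $X_0(N)$ is itself $\mathbb{Q}$-rational. Hence for $\sigma\in\mathrm{Gal}(\overline{\mathbb{Q}}/\mathbb{Q})$, the conjugate $\widehat{W}_N^{\sigma}$ is again a lift of $W_N$ to $X_\Delta(N)$, so equals $[c_\sigma]\widehat{W}_N$ for a unique class $c_\sigma\in(\Z/N\Z)^*/\Delta$. I would pin down $c_\sigma$ by testing on the $\mathbb{Q}$-rational cusp $P_0$ above $0$ given by Lemma~\ref{lem:cuspsfieldofdef}(b). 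A direct computation on the cusp representatives $\pm\sm x\\y\esm$ of $X(N)$ shows $\widehat{W}_N(P_0)$ is the class $Q_0$ of $\sm 1\\0\esm$ on $X_\Delta(N)$, and the Galois action $\sigma:\sm x\\y\esm\mapsto\sm\sigma x\\y\esm$ recalled before Lemma~\ref{lem:cuspsfieldofdef} sends $Q_0$ to the class of $\sm\sigma\\0\esm$, which coincides with $[\sigma]Q_0$. Since the $(\Z/N\Z)^*/\Delta$-action on the preimages of the cusp $\infty$ is free, this forces $c_\sigma\equiv\sigma\pmod{\Delta}$.

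Combining everything, $([a]\widehat{W}_N)^{\sigma}=[a\sigma]\widehat{W}_N$ equals $[a]\widehat{W}_N$ if and only if $\sigma\in\Delta$ inside $\mathrm{Gal}(\mathbb{Q}(\zeta_N)/\mathbb{Q})=(\Z/N\Z)^*$; hence the stabilizer of any such lift is $\mathrm{Gal}(\overline{\mathbb{Q}}/\mathbb{Q}(\zeta_N)^\Delta)$, giving the stated field of definition and, under $\Delta\subsetneq(\Z/N\Z)^*$, the non-rationality over $\mathbb{Q}$. The main obstacle is the cusp-theoretic determination of the cocycle $c_\sigma$; the remaining ingredients are the matrix identities and the $\mathbb{Q}$-rationality of the covering, both of which are routine.
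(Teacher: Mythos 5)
Your proof is correct, and all three pieces go through. Parts (i) and (ii) are exactly the direct calculation the paper dismisses as ``easily seen'': note only that the upper-right entry of $\widehat{W}_N\gamma\widehat{W}_N^{-1}$ is $-c/N$ rather than $-c$ (this is precisely where $c\equiv 0\bmod N$ is needed to keep the conjugate integral), which does not affect your conclusion. For the field of definition the paper argues in two separate steps: $[a]\widehat{W}_N$ sends the $\Q$-rational cusps above $0$ to cusps above $\infty$, which by Lemma~\ref{lem:cuspsfieldofdef}(c) are defined exactly over $\Q(\zeta_N)^{\Delta}$, so the field of definition must contain $\Q(\zeta_N)^{\Delta}$; and it can be no larger because the Galois orbit of $[a]\widehat{W}_N$ lies inside the set of only $|(\Z/N\Z)^*/\Delta|$ lifts of $W_N$. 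You instead compute the cocycle $\sigma\mapsto c_\sigma$ exactly, by pushing the rational cusp $P_0$ through $\widehat{W}_N$ and using the explicit Galois action $\sigma{x\choose y}={\sigma x\choose y}$ on the fibre over $\infty$, on which the deck group acts simply transitively; this pins down the stabilizer of $[a]\widehat{W}_N$ in one stroke and gives both inclusions simultaneously. The two arguments rest on the same cusp-theoretic input (Lemma~\ref{lem:cuspsfieldofdef} and the fact that $0\mapsto\infty$ under $\widehat{W}_N$); yours is slightly more refined in that it identifies \emph{which} lift each Galois conjugate is ($c_\sigma\equiv\sigma\bmod\Delta$), whereas the paper's orbit-counting step is shorter. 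Both are valid.
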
 

\begin{proof}
The first few claims are easily seen by direct calculation. 
Now $[a]\widehat{W}_N$ maps the ($\Q$-rational) cusps above the cusp 
${0\choose 1}$ of $X_0(N)$ to cusps above $\infty ={1\choose 0}={1\choose N}$.
By Lemma \ref{lem:cuspsfieldofdef} the latter ones are only defined over 
$\Q(\zeta_N)^{\Delta}$. So the field of definition of $[a]\widehat{W}_N$ must
contain $\Q(\zeta_N)^{\Delta}$. On the other hand, it cannot be bigger, because
there are only $|(\Z/N\Z)^*/\Delta|$ automorphisms of $X_{\Delta}(N)$ lying
above the automorphism $W_N$ of $X_0(N)$.  		
\end{proof}

For example if $\Delta_4=\{\pm 1,\pm 3,\pm 4,\pm 7,\pm 9,\pm 10,\pm 11,\pm 12,\pm 16\}$,
then the involution $\widehat{W}_{37}$ of $X_{\Delta_4}(37)$ is only defined over 
$\Q(\sqrt{37})$.
This will turn out to be useful in the proof of Proposition \ref{prop:37noex}. 
By the way, this also explains that the genus $2$ curve 
$X_{\Delta_4}(37)/\widehat{W}_{37}$ does not show up in the tables of \cite{B-G-G-P}.

\begin{Exm}
However, if $3<d\neq N$, the existence of a matrix $\widehat{W}_d$ does 
not imply that all automorphisms $W_d$ on $X_\Delta(N)$ are involutions. Let for example 
$N=55$ and $\Delta_3 =\{\pm 1, \pm 16, \pm 19, \pm 24, \pm 26\}$.
Then $t_{11}(\Delta_3)=\Delta_3$ and a matrix $\widehat{W}_{11}$ exists
because $-11$ is a square modulo $5$. But the automorphism $W_{11}$ on $X_{\Delta_3}(55)$ defined by $\begin{pmatrix} 11 & 2\\55 & 11 \end{pmatrix}$
has order $4$ because its square is $[21]$.
\end{Exm}

\begin{Exm}\label{exm:dontcommute}
Even if $\widehat{W}_{d_1}$ and $\widehat{W}_{d_2}$ are involutions 
of $X_{\Delta}(N)$, their product is not necessarily an involution.
In particular, such involutions on $X_{\Delta}(N)$ do not always commute.
\par For example, $\widehat{W}_5 ={10\ -3\choose 35\ -10}$ and 
$\widehat{W}_{35}={0\ -1\choose 35\ 0}$ define involutions on 
$X_{\Delta_2}(35)$ where $\Delta_2 =\{ \pm 1, \pm 11, \pm 16 \}$.
But $(\widehat{W}_5 \widehat{W}_{35})^2=[13]$ has order $4$; 
so $\widehat{W}_5 \widehat{W}_{35}$ has order $8$.
\end{Exm}




\section{Fixed points of Atkin-Lehner involutions}\label{sec:Atkin}

In this section we explain a method to determine whether a matrix 
$W_d$ gives a bielliptic involution on $X_\Delta(N)$ or not.
Thanks to the Hurwitz formula, it suffices to know how to calculate 
the number of the fixed points of $W_d$. 
\par
First we point out that if $d\neq 4$, then $W_d$ does not fix any 
cusps by \cite[Proposition 3]{O2}. For our purposes it will suffice 
to find the number of non-cuspidal fixed points of $W_d$ on 
$X_{\Delta}(N)$.
\par
Delaunay \cite{De} suggested 
a method to find all the fixed points of $W_d$ on $X_0(N)$. In fact, 
he gave an algorithm to give all the candidates for the fixed points 
of $W_d$, but didn't explain how to choose the exact fixed points 
among them explicitly.

Suppose $d\neq 2,3$. 
If $W_d$ has a non-cuspidal fixed point on $X_0(N)$ at all 
(and only these $W_d$ are of interest to us), then $W_d$ is given 
by an elliptic element (see Section \ref{sec:autos}). So $x=-w$ and
$$W_d=\begin{pmatrix} dx & y\\ Nz & -dx\end{pmatrix}.$$
One can check easily that
\begin{equation}\label{eq:fixedpoint}
\tau=\frac{2dx+\sqrt{-4d}}{2Nz}
\end{equation}
is a fixed point of $W_d$. Conversely, every fixed point has the form (\ref{eq:fixedpoint}).

Suppose $d=2$ or $3$. 
If $W_d$ has a non-cuspidal fixed point at all, then, as explained in the 
last section, we have $|x+w|=0$ or $|x+w|=1$. The first case 
is discussed above, for the second, putting $w=1-x$ the point
\begin{equation}\label{eq:fixedpoint2}
\tau=\frac{-d(1-2x)+\sqrt{d^2-4d}}{2Nz}.
\end{equation}
is fixed by
$$W_d=\begin{pmatrix} dx & y\\ Nz & d(1-x)\end{pmatrix}.$$



Now we will propose a systematic way to find inequivalent points modulo 
$\Gamma_0(N)$ among the fixed points in (\ref{eq:fixedpoint}) and 
(\ref{eq:fixedpoint2}). For this purpose we need to introduce quadratic 
forms. For a negative integer $D$ conguent to $0$ or $1$ modulo $4$, we 
denote by $\mathcal Q_D$
the set of positive definite integral binary quadratic forms
$$Q(x,y)=[p,q,r]=px^2+qxy+ry^2$$
with discriminant $D=q^2-4pr$.
Then $\Gamma(1)$ acts on $\mathcal Q_D$ by
$$Q\circ\gamma(x,y)=Q(sx+ty,ux+vy)$$
where $\gamma=\begin{pmatrix}s&t\\u&v\end{pmatrix}$. A primitive
positive definite form $[p,q,r]$ is said to be {\it reduced form}
if
$$|q|\leq p\leq r,\,\,\mbox{and}\,\,q\geq 0\,\,\mbox{if either}\,\,|q|=p\,\,\mbox{or}\,\,p=r.$$
Let $\mathcal Q_D^\circ\subset\mathcal Q_D$ be the subset of primitive forms, that is, 
$$
\mathcal Q_D^\circ:=\{[p,q,r] \in \mathcal Q_D \mid \gcd (p,q,r)=1 \}.
$$
Then $\Gamma(1)$ also acts on $\mathcal Q_D^\circ$.
As is well known \cite{C}, there is an 1-1 correspondence between the set of classes $\Gamma(1)\backslash\mathcal Q_D^\circ$ and the set of reduced primitive definite forms.


\begin{Prop}\cite{G-K-Z}\label{GKZ} For each $\beta \in \mathbb{Z}/2N\mathbb{Z}$, we define
$$\mathcal Q_{D,N,\beta}^\circ=\{[pN,q,r]\in\mathcal Q_D\,|\,  \beta\equiv q\,\,({\rm mod}\,\,2N), \gcd(p,q,r)=1\}. $$
Then we have the following:
\begin{enumerate}
\item[(i)] Define $m=\gcd\left(N,\beta,\frac{\beta^2-D}{4N}\right)$ and
fix a decomposition $m=m_1m_2$ with $m_1,m_2>0$ and $\gcd(m_1,m_2)=1.$
Let
$$\mathcal Q_{D,N,\beta,m_1,m_2}^\circ=\left\{[pN,q,r]\in\mathcal Q_{D,N,\beta}^\circ
\,|\,\gcd(N,q,p)=m_1,\gcd(N,q,r)=m_2\right\}.$$
Then $\Gamma_0(N)$ acts on $\mathcal Q_{D,N,\beta,m_1,m_2}^\circ$ and
there is an 1-1 correspondence between
\begin{eqnarray*}
\mathcal Q_{D,N,\beta,m_1,m_2}^\circ/\Gamma_0(N) & \to & 
\mathcal Q_D^\circ/\Gamma(1) \\ {[pN,q,r]} & \mapsto & [pN_1,q,rN_2]
\end{eqnarray*}
where $N_1 N_2$ is any decomposition of $N$
into coprime factors such that $\gcd(m_1,N_2)=\gcd(m_2,N_1)=1$. 
Moreover we have a $\Gamma_0(N)$-invariant decomposition as follows:
\begin{equation}\label{decom2}
\mathcal Q_{D,N,\beta}^\circ=  \bigcup_{m=m_1m_2\atop{m_1,m_2>0\atop{\gcd(m_1,m_2)=1}}}   \mathcal Q_{D,N,\beta,m_1,m_2}^\circ.
\end{equation}

\item[(ii)] The inverse image $[pN_2,q,r/N_2]$ of any primitive form $[\bar{p},\bar{q},\bar{r}]$ of discriminant $D$ under the 1-1 correspondence in (i) is obtained by solving the following equations:
\begin{eqnarray*}
&&p=\bar{p}s^2+\bar{q}su+\bar{r}u^2\\
&&q=2\bar{p}st+\bar{q}(ru+tu)+2\bar{r}uv\\
&&r=\bar{p}t^2+\bar{q}tv+\bar{r}v^2
\end{eqnarray*}
satisfying $p\equiv 0\,({\rm mod}\,\,N_1),q\equiv \beta\,({\rm mod}\,\,2N),
r\equiv 0\,({\rm mod}\,\,N_2)$ and $\begin{pmatrix}s&t\\u&v\end{pmatrix}\in\Gamma(1)$.

\item[(iii)] We have the following $\Gamma_0(N)$-invariant decompostion:
\begin{equation}\label{decom}
\mathcal Q_{D,N,\beta}=\bigcup_{{\ell>0}\atop{\ell^2 |D}}
\bigcup_{{\lambda\,(2N)}\atop{\ell\lambda\equiv\beta\,(2N)\atop \lambda^2\equiv{D/\ell^2\,(4N)}}}
\ell\mathcal Q^\circ_{D/\ell^2,N,\lambda}
\end{equation}
\end{enumerate}
\end{Prop}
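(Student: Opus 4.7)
The approach is to unpack the classical work of Gross, Kohnen, and Zagier on the interplay between binary quadratic forms and level structure. All three parts rest on careful bookkeeping of certain gcd-invariants of a form $[pN,q,r]$ under the $\Gamma_0(N)$-action. First I would establish part (i): the key observation is that for any $\gamma\in\Gamma_0(N)$, writing out the new coefficients of $Q\circ\gamma$ and reducing modulo $N$ (using $c\equiv 0\pmod{N}$) shows that the pair $(\gcd(N,q,p),\gcd(N,q,r))$ is an invariant. The decomposition \eqref{decom2} is then just the partition of $\mathcal{Q}^\circ_{D,N,\beta}$ by this pair.

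For the bijection in (i), the forward map sends $[pN,q,r]$ to $[pN_1,q,rN_2]$, where $N=N_1N_2$ is a factorization (unique up to certain swaps) with $\gcd(m_1,N_2)=\gcd(m_2,N_1)=1$. I would verify well-definedness in two stages: primitivity of the output follows from $\gcd(p,q,r)=1$ combined with $m_1m_2=m$; and different valid choices of $(N_1,N_2)$ differ only by a swap of factors coprime to $m$, which yields $\Gamma(1)$-equivalent forms. For surjectivity, apply $\Gamma(1)$ to any primitive form of discriminant $D$ to produce one with first coefficient divisible by $N_1$ and third by $N_2$, which is precisely the process made explicit in part (ii). Injectivity then reduces to showing that a $\Gamma(1)$-transformation identifying two preimages must itself lie in $\Gamma_0(N)$.

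Part (ii) supplies the algorithmic inverse to the map in (i). Given a primitive $[\bar p,\bar q,\bar r]$, the displayed system is simply the expansion of $[\bar p,\bar q,\bar r]\circ\gamma$ for $\gamma\in\Gamma(1)$, subject to the divisibility conditions that identify the image in $\mathcal{Q}^\circ_{D,N,\beta,m_1,m_2}$. Part (iii) then extends the picture to non-primitive forms: any $Q\in\mathcal{Q}_{D,N,\beta}$ has a content $\ell\geq 1$, giving $Q=\ell Q_0$ with $Q_0$ primitive of discriminant $D/\ell^2$. Necessarily $\ell^2\mid D$, and the middle coefficient $\lambda$ of $Q_0$ satisfies $\ell\lambda\equiv\beta\pmod{2N}$ together with $\lambda^2\equiv D/\ell^2\pmod{4N}$; conversely every such $(\ell,\lambda)$-pair contributes exactly once, giving \eqref{decom}.

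The main obstacle I anticipate lies in the injectivity claim of part (i): two forms in $\mathcal{Q}^\circ_{D,N,\beta,m_1,m_2}$ that are $\Gamma_0(N)$-inequivalent must not become $\Gamma(1)$-equivalent after the passage from $[pN,q,r]$ to $[pN_1,q,rN_2]$. This reduces to showing that any unimodular transformation carrying one preimage to another automatically preserves the divisibility constraints on the first and third coefficients, forcing it to lie in $\Gamma_0(N)$; tracing through this argument cleanly requires the coprimality conditions $\gcd(m_1,N_2)=\gcd(m_2,N_1)=1$ in an essential way. For the exceptional discriminants $D=-3$ and $D=-4$, where primitive forms have nontrivial $\Gamma(1)$-stabilizers, a separate check is needed to verify that those stabilizers are compatible with the level structure.
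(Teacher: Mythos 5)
There is a preliminary point to make: the paper does not prove Proposition \ref{GKZ} at all --- it is quoted from Gross--Kohnen--Zagier \cite{G-K-Z} --- so there is no internal proof to compare yours against. Judged on its own, your outline follows the standard strategy, but it remains a plan rather than a proof at exactly the two places you yourself flag as delicate: the independence of the class of $[pN_1,q,rN_2]$ from the choice of admissible factorization $N=N_1N_2$, and the injectivity (that $\Gamma(1)$-equivalence of two images forces $\Gamma_0(N)$-equivalence of the preimages) are both asserted (``which yields $\Gamma(1)$-equivalent forms'', ``forcing it to lie in $\Gamma_0(N)$'') with no argument supplied. Those two facts are essentially the entire content of part (i); the rest is routine bookkeeping.

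There is also a concrete error in your treatment of part (iii). In this proposition the superscript $\circ$ on $\mathcal{Q}^\circ_{D,N,\beta}$ does \emph{not} mean that the form $[pN,q,r]$ is primitive: the condition is $\gcd(p,q,r)=1$, where $p$ denotes the first coefficient divided by $N$. Consequently the parameter $\ell$ in \eqref{decom} is $\gcd(p,q,r)$, not the content $\gcd(pN,q,r)$ of the form, and these differ in general. Your argument ``any $Q$ has a content $\ell$, giving $Q=\ell Q_0$ with $Q_0$ primitive of discriminant $D/\ell^2$'' therefore proves a different, and in fact false, statement. For instance take $N=4$, $D=-28$, $\beta=2$ and $Q=[4,2,2]$: its content is $2$, but $Q/2=[2,1,1]$ has first coefficient not divisible by $N$, so $Q$ lies in $2\,\mathcal{Q}^\circ_{-7,4,\lambda}$ for no $\lambda$; the decomposition \eqref{decom} instead places $Q$ in the $\ell=1$ piece $\mathcal{Q}^\circ_{-28,4,2}$, because $\gcd(1,2,2)=1$. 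Dividing by $\ell=\gcd(p,q,r)$ rather than by the content, the verification that $\ell^2\mid D$, that $\ell\lambda\equiv\beta\pmod{2N}$, and that $\lambda^2\equiv D/\ell^2\pmod{4N}$ goes through exactly as you describe, and the first coefficient of $Q/\ell$ stays divisible by $N$. Finally, the aside about the stabilizers for $D=-3,-4$ is not needed: the claim in (i) is a bijection of orbit sets, not a count with multiplicities, so nontrivial $\Gamma(1)$-stabilizers play no role.
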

Given a quadratic form $Q(x,y)$, let $\alpha_Q$ be the unique zero in $\mathbb H$ satisfying $Q(x,1)=0$.
Note that each fixed point in (\ref{eq:fixedpoint}) (resp. (\ref{eq:fixedpoint2})) can be considered as the
point $\alpha_Q$ of a quadratic form $Q=[Nz,-2dx,-y]$ (resp. $Q=[Nz,d(1-2x),-y]$).
Our strategy is to find inequivalent quadratic forms $[Nz,-2dx,-y]$ mod $\Gamma_0(N)$
by using Proposition \ref{GKZ} and the decomposition (\ref{decom}).
One can show that for $d=3$ the points in (\ref{eq:fixedpoint2}) are the same as
the points in (\ref{eq:fixedpoint}) of the quadratic forms in
$2\mathcal Q_{-3,N,\lambda}^\circ$. Thus it suffices to handle the case $d=2$ separately.

Regarding the computation of fixed points on $X_0(N)$, we can propose the following four steps.
\begin{enumerate}
\item[{\bf Step I}] We search $\beta \pmod{2N}$ such that $\beta^2\equiv -4d \pmod{4N}$ with $\beta \equiv -2dx\pmod{2N}$ (resp. $\beta \equiv d(1-2x) \pmod{2N}$) for $d>3$(resp. $d=2,3$)  where $x\in \mathbb Z$.

\item[{\bf Step II}] We set the decomposition as in \eqref{decom2} and \eqref{decom} with $D=-4d$ (resp. $D=d^2-4d$) for $d>3$ (resp. $d=2,3$).

\item[{\bf Step III}] For each factor in the decompostion in Step II, we find the quadratic form representations
and taking the inverse inverse image of reduced forms under the map which is described  in  Proposition \ref{GKZ}-(ii).

\item[{\bf Step IV}] We form the elliptic elements corresponding to quadratic form representations obtained in Step III,
and find their fixed points.
\end{enumerate}

Finally we explain a method of determining the number of fixed points of an elliptic element $[a]\widehat{W}_d$  on $X_\Delta(N)$ for some $a\in(\mathbb Z/N\mathbb Z)^*$.
Note that the fixed points of $[a]\widehat{W}_d$ on $X_\Delta(N)$ are lying above the fixed points of $W_d$ on $X_0(N)$.
Let $z_1,z_2,\dots,z_n\in\mathbb H$ be the inequivalent elliptic points which represent all the fixed points of $W_d$ on $X_0(N)$, and let  $W_{d,1},W_{d,2},\dots,W_{d,n}$
be corresponding elliptic elements.
Note that $G=\Gamma_0(N)/\Gamma_\Delta(N)$ is the Galois group of the covering $X_\Delta(N)\to X_0(N)$, and
the elements of $G$ are the automorphisms of the form $[g]$ with $g\in(\mathbb Z/N\mathbb Z)^*$.
Thus for each $j$ the points on $X_{\Delta}(N)$ lying above $z_j$ are represented by $[g]z_j$ with $[g]\in G$.
Then one can easily show that $W_{d,i}$ fixes $[g]z_j$ if and only if $W_{d,i}[g]{W_{d,j}}^{-1}[g]^{-1}\in\Gamma_\Delta(N)$.
Thus one can calculate the number of fixed points of $W_{d,i}$ lying above $z_j$, and hence determine whether $W_{d,i}$ defines a bielliptic involution or not on $X_\Delta(N)$.

\begin{Exm}\label{calcfixedpoints}
Consider $X_{\Delta_2}(34)$ of genus 5 where $\Delta_2 =\{\pm1,\pm9,\pm13,\pm15\}$.
Since $W_{2}$ a bielliptic involution of $X_0(34)$ of genus 3, it has $4$ fixed points on $X_0(34)$.
First, let us find $4$ elliptic elements which give $4$ fixed points of $W_2$ on $X_0(34)$ by following the algorithm proposed as above.
Let $N=34$ and $d=2$. Applying {\bf Step I} and  {\bf Step II} we have $D=-4d=-8$, $\beta\equiv \pm 20 \pmod{68}$, and have decomposition as follows:
$$
\mathcal{Q}_{-8, 34, 20}=\mathcal{Q}_{-8,34,20,1,1}^\circ, \mathcal{Q}_{-8, 34, -20}=\mathcal{Q}_{-8,34,-20,1,1}^\circ.$$
Also, we have $D=d^2-4d=-4$, $\beta\equiv \pm 26 \pmod{68}$, and have decomposition as follows:
$$
\mathcal{Q}_{-4, 34, 26}=\mathcal{Q}_{-4,34,26,1,1}^\circ, \mathcal{Q}_{-4, 34, -26}=\mathcal{Q}_{-4,34,-26,1,1}^\circ.$$
Now applying {\bf Step III} we obtain the following quadratic form representations:
\begin{align*}
\mathcal{Q}_{-8,34,20,1,1}^\circ /\Gamma_0(34) & = \{ [34,20,3]\},\\
\mathcal{Q}_{-8,34,-20,1,1}^\circ /\Gamma_0(34) & = \{ [34,-20,3]\},\\
\mathcal{Q}_{-4,34,26,1,1}^\circ /\Gamma_0(34) & = \{ [34,26,5]\},\\
\mathcal{Q}_{-4,34,-26,1,1}^\circ /\Gamma_0(34) & = \{ [34,-26,5]\}
\end{align*}
Moreover in {\bf Step IV} the corresponding elliptic elements are given as follows:
$$
W_{2,1}=\begin{pmatrix}-10&-3\\34&10\end{pmatrix},
W_{2,2}=\begin{pmatrix}10&-3\\34&-10\end{pmatrix},
W_{2,3}=\begin{pmatrix}-12&-5\\34&14\end{pmatrix},
W_{2,4}=\begin{pmatrix}14&-5\\34&-12\end{pmatrix},
$$
Then, on $X_{\Delta_2}(34)$, they define a unique involution $\widehat{W}_{2}$.
\par 
Applying the Hurwitz formula to the covering $X_{\Delta_2}(34)\to X_0(34)$, we see that
it is totally unramified; so there are $8$ points of $X_{\Delta_2}(34)$ lying above the
$4$ fixed points on $X_0(34)$ by $W_{2,i}$.
\par 
Now we show that $\widehat{W}_{2}$ fixes all these $8$ points. 
Let $z_i$ be the fixed points on $\mathbb H$ by $W_{2,i}$ for $i=1,2,3,4$.
Then 8 points on $X_{\Delta_2}(34)$ lying over $z_i$'s are represented by $z_i, [3]z_i$ for $i=1,2,3,4$.
$\widehat{W}_{2}$ fixes $z_i$ on $X_{\Delta_2}(34)$ if and only if
$W_{2,1}W_{2,i}^{-1}=[a]$ for some $a\in\Delta_2$.
By a direct computation, we have $a = 1,-1,15,9$ for $i=1,2,3,4$, respectively.
Thus $\widehat{W}_{2}$ fixes all of them.
$\widehat{W}_{2}$ fixes $[3]z_i$ on $X_{\Delta}(34)$ if and only if
$W_{2,1}[3]W_{2,i}^{-1}[3]^{-1}=[a]$ for some $a\in\Delta_2$.
By a direct computation, we have $a = 1,-1,15,9$ for $i=1,2,3,4$, respectively.
Therefore $\widehat{W}_{2}$ fixes all the 8 points lying over the 4 fixed points on $X_0(34)$ by $W_{2,i}$.
Thus $\widehat{W}_{2}$ is a bielliptic involution on $X_{\Delta_2}(34)$.
\end{Exm}

\begin{Rk}\label{45and64}
A completely analogous calculation shows that $\widehat{W}_9$ is a bielliptic involution
of $X_{\Delta_4}(45)$.
\par 
But for $X_{\Delta_3}(64)$, which is also of genus $5$ and $X_0(64)$ a non-hyperelliptic 
curve of genus $3$, the same setting produces a different outcome. Here $W_{64}$ is a bielliptic 
involution of $X_0(64)$, but calculating the elliptic elements we see that $\widehat{W}_{64}$ and
$[3]\widehat{W}_{64}$ both have $4$ fixed points each. So they are not bielliptic involutions of
$X_{\Delta_3}(64)$. (See Section \ref{sec:bielliptic} for the treatment of this case.)
\end{Rk}

In Example \ref{calcfixedpoints} we were lucky that the fixed points of $W_2$ on $X_0(34)$
couldn't be ramified in the covering $X_{\Delta_2}(34) \to X_0(34)$. Otherwise it might
be tedious to determine which representatives lying over the same $[z_i]$ give the same 
point on $X_{\Delta}(N)$. Luckily, this happens very rarely.

\begin{Lem}
\begin{itemize} 
\item[(a)] If $d\neq 3$, the non-cuspidal fixed points of $W_d$ on $X_0(N)$ are unramified
in the covering $X_{\Delta}(N) \to X_0(N)$.
\item[(b)] If $|(\Z/N\Z)^*/\Delta|$ is not divisible by $3$, the fixed points of $W_3$ on
$X_0(N)$ are unramified in the covering $X_{\Delta}(N) \to X_0(N)$.
\end{itemize} 
\end{Lem}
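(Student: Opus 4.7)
My plan is to reduce the question to the structure of the projective stabilizer of $\tau$ in $\Gamma_0(N)$. The cover $X_\Delta(N)\to X_0(N)$ is Galois with group $G\cong (\Z/N\Z)^*/\Delta$, and at a non-cuspidal point $[\tau]$ the ramification index equals the index, inside $\Gamma_0(N)/\{\pm I\}$, of the stabilizer coming from $\Gamma_\Delta(N)$ within the one coming from $\Gamma_0(N)$. This index is $>1$ precisely when $\tau$ is an elliptic point of $\Gamma_0(N)$; in that case the projective $\Gamma_0(N)$-stabilizer of $\tau$ is cyclic of order $2$ (if $\tau$ is $SL_2(\Z)$-equivalent to $i$) or $3$ (if $\tau$ is $SL_2(\Z)$-equivalent to $\rho=e^{2\pi i/3}$). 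By classical reduction theory these two alternatives are equivalent to the primitive integral binary quadratic form defining $\tau$ having discriminant $-4$ or $-3$, respectively.

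For part (b), every fixed point of $W_3$ lies in $\Q(\sqrt{-3})$, so it can only be an elliptic point of $\Gamma_0(N)$ of order $3$, and the ramification index lies in $\{1,3\}$. Since this index also divides $|G|$, and $3\nmid|G|$ by hypothesis, it must equal $1$.

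For part (a) I would use the quadratic-form setup from Section~\ref{sec:Atkin}: a non-cuspidal fixed point $\tau$ of $W_d$ in the trace-zero case is the upper-half-plane zero of $[Nz,-2dx,-y]$ of discriminant $-4d$. Writing $g=\gcd(Nz,2dx,y)$, the primitive form has discriminant $-4d/g^2$. The key calculation divides the determinant identity $-d^2x^2-Nyz=d$ by $d$ to produce $(N/d)yz=-(dx^2+1)$; the right-hand side is coprime to $d$, and since $\gcd(d,N/d)=1$ this forces $\gcd(y,d)=1$, hence $\gcd(g,d)=1$ (as $g\mid y$). Setting $-4d/g^2=-4$ would force $d=g^2$ with $\gcd(g,d)=1$, so $g=d=1$; setting $-4d/g^2=-3$ would force $4d=3g^2$ with $\gcd(g,d)=1$, so $g=2$ and $d=3$, which is excluded. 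Thus $\tau$ is not an elliptic point of $\Gamma_0(N)$ in this case, and the ramification index is $1$.

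The hard part will be the trace-$\pm 1$ subcase arising for $d=2$: here the form $[Nz,-2(2x-1),-y]$ already has discriminant $d^2-4d=-4$, so $\tau$ is $SL_2(\Z)$-equivalent to $i$ and could a priori be an order-$2$ elliptic point of $\Gamma_0(N)$. To close this gap I would exploit the auxiliary identity $(2x-1)^2+1=-Nyz$ together with $\gcd(2,N/2)=1$ to pin down the stabilizer generator $\gamma\in\Gamma_0(N)$ of $\tau$ explicitly, and then verify that its upper-left entry reduces modulo $N$ to an element of $\Delta$ under the paper's hypotheses on $\Delta$; this parity-style check, in the spirit of Example~\ref{calcfixedpoints}, is the technical core of part (a).
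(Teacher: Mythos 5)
Your part (b) and the trace-zero part of (a) are correct, and your route is genuinely different from (and in one respect cleaner than) the paper's. The paper quotes Ogg's CM description of the fixed points of $W_d$ and argues that a ramified point must be an elliptic point of $\Gamma_0(N)$, i.e.\ have CM discriminant $-3$ or $-4$, concluding that only $d=3$ and $d=4$ are dangerous; it then needs a separate argument for $d=4$ (the trace-zero matrix $W_4$ is itself an involution of $X_\Delta(N)$ fixing the point, and a cyclic stabilizer cannot contain two involutions). Your gcd computation on $[Nz,-2dx,-y]$ shows directly that the primitive discriminant $-4d/g^2$ is never $-3$ or $-4$ for $d>1$, so every trace-zero fixed point, including those of $W_4$, is automatically a non-elliptic point of $\Gamma_0(N)$; the $d=4$ special case disappears.

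The gap is the $|x+w|=1$ subcase for $d=2$, which you correctly flag but only sketch, and the sketched verification cannot succeed in the stated generality. If $W_2=\sm 2x&y\\Nz&2(1-x)\esm$ has determinant $2$ and trace $2$, then $W_2^2=2W_2-2I$, so $\gamma:=W_2-I=\sm 2x-1&y\\Nz&1-2x\esm$ lies in $\Gamma_0(N)$, has order $2$ modulo $\pm I$, and fixes the same $\tau$. Hence $\tau$ is always an order-$2$ elliptic point of $\Gamma_0(N)$, and the points of $X_\Delta(N)$ above it are unramified if and only if $a:=2x-1\in\Delta$. But $\det\gamma=1$ forces $a^2\equiv -1\pmod N$, so $a\not\equiv\pm 1$ for $N>2$, and whether $a\in\Delta$ genuinely depends on $\Delta$: for $N=26$ and $\Delta=\{\pm1,\pm3,\pm9\}$ one has $a\equiv\pm5\notin\Delta$, the two fixed points of $W_2$ on $X_0(26)$ are exactly these two elliptic points, and the Hurwitz formula for the degree-$2$ cover $X_\Delta(26)\to X_0(26)$ (genera $4$ and $2$, with cusps unramified since $N$ is squarefree) confirms exactly two ramified points. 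So no parity-style check can close this case; the assertion of part (a) is simply not available for $d=2$ beyond the trace-zero fixed points. (For what it is worth, the paper's own proof also slides over this: for $d=2$ the second CM discriminant is $d^2-4d=-4$, not $-d=-2$, so its conclusion ``$d=4$ or $d=3$'' omits $d=2$. In the one application with $d=2$, namely $N=34$, every intermediate $\Delta$ happens to contain the square roots of $-1$ modulo $34$, so nothing breaks downstream.)
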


\begin{proof}
By \cite[p. 454]{O2} the non-cuspidal fixed points of $W_d$ on $X_0(N)$ correspond to certain 
elliptic curves with complex multiplication by orders with discriminants $-4d$ and maybe $-d$. 	
On the other hand, the only points that can possibly ramify in the covering $X_1(N)\to X_0(N)$
are cusps and elliptic points. The latter ones have complex multiplication by $\Z[i]$ (then
the ramification index is $2$ or $1$) or by $\Z[\zeta_3]$ (with ramification index $3$ or $1$). 
So a necessary condition for a fixed point of $W_d$ to be ramified in $X_{\Delta}(N)$ is $d=4$
or $d=3$, and for $d=3$ in addition the degree of the covering has to be divisible by $3$. 
This proves (b) and (a) for $d\neq 4$.
\par 
Now let $P$ be a fixed point of $W_4$ on $X_0(N)$. Choose the matrix $W_4$ such that it actually 
fixes a point $z\in{\mathbb H}$ representing $P$. Then $W_4$ must be an elliptic element and
consequently have trace $0$. So $W_4$ defines an involution of $X_{\Delta}(N)$. Note that
$W_4$ always lifts to $X_{\Delta}(N)$ by Lemma \ref{thm:whenaut}. If $P$ were ramified in
$X_{\Delta}(N)\to X_0(N)$ it would have ramification index $2$; so it would be fixed by an
involution $[a]$. But it is already fixed by the involution $W_4$ of $X_{\Delta}(N)$, and 
since the stabilizer of a point is always cyclic, it cannot contain $2$ involutions.
\end{proof}

\begin{Exm}
For $d=3$ there are indeed counter-examples. Consider the curve $X_{\Delta_1}(21)$ where
$\Delta_1 =\{\pm 1,\pm 8\}$. Then $W={6\ \ \ -1\choose 21\ \ -3}$ is an automorphism of
$X_{\Delta_1}(21)$ inducing $W_3$ on $X_0(21)$. Moreover, $W^2 =[5]$, so $W$ has order $6$.
One easily checks that $W$ fixes a point $z\in {\mathbb H}$. This gives rise to a point on 
$X_{\Delta_1}(21)$ that is fixed by $[5]$, 
so ramified in the covering $X_{\Delta_1}(21)\to X_0(21)$, 
and whose image in $X_0(21)$ is fixed by $W_3$.	 
\end{Exm}

\section{Non-bielliptic curves}\label{sec:non}

In this section, we exclude all the non-bielliptic curves $X_\Delta(N)$. 
For the extremal cases $\Delta=(\Z/N\Z)^*$ and $\Delta=\{\pm 1\}$, i.e.
$X_0(N)$ and $X_1(N)$, it is known which are bielliptic (\cite{B1}, \cite{J-K1});
so we can concentrate on the intermediate case. In other words, $\Delta$ is 
always meant to be strictly between $\{\pm 1\}$ and $(\Z/N\Z)^*$.
\par
Here, as in the rest of the paper, we use the notation 
$Aut(X)=Aut_{\overline{K}}(X)$.
\par 
When dealing with an individual curve, the following facts are very useful.

\begin{Thm}{\rm{\textbf{(Castelnuovo's Inequality)}}}
Let $F$ be a function field with constant field $k$ of characteristic zero.
Suppose there are two subfields $F_1$ and $F_2$ with constant
field $k$ satisfying
\begin{enumerate}
\item[(1)] $F=F_1F_2$ is the compositum of $F_1$ and $F_2$.
\item[(2)] $[F:F_i]=n_i$, and $F_i$ has genus $g_i$ $(i=1,2)$.
\end{enumerate}
Then the genus $g$ of $F$ is bounded by
$$g\leq n_1g_1+n_2g_2+(n_1-1)(n_2-1).$$
\end{Thm}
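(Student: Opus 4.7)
The plan is to realize the smooth projective curve $X$ with function field $F$ as the normalization of an irreducible curve $C$ lying in the product surface $S = X_1 \times X_2$, where $X_i$ is the smooth projective curve with function field $F_i$, and then to apply the adjunction formula on $S$ together with the standard inequality $g(X) \le p_a(C)$ for normalizations.

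Since the genera of $X$, $X_1$, $X_2$ are preserved under extension of the constant field, I may first extend scalars and assume $k$ is algebraically closed. Let $\pi_i : X \to X_i$ be the degree-$n_i$ covering corresponding to $F_i \hookrightarrow F$, and consider $\phi = \pi_1 \times \pi_2 : X \to S$ with image $C$. The hypothesis $F = F_1 F_2$ forces $\phi$ to be birational onto $C$: otherwise the function field $k(C)$, which contains both $F_1$ and $F_2$, would be a proper subfield of their compositum $F$, contradicting $F = F_1 F_2$. Hence $X$ is the normalization of $C$.

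Next I would compute the class of $C$ in $\mathrm{Pic}(S)$. Let $f_1, f_2$ denote the fiber classes of the two projections $\mathrm{pr}_1, \mathrm{pr}_2$, so that $f_1^2 = f_2^2 = 0$ and $f_1 \cdot f_2 = 1$. The intersection number $C \cdot f_i$ equals $\deg(\mathrm{pr}_i|_C) = n_i$, using the birationality of $\phi$ to identify this with $\deg \pi_i$. Hence $C \sim n_2 f_1 + n_1 f_2$. Combining with $K_S = (2g_1 - 2) f_1 + (2g_2 - 2) f_2$, the adjunction formula $2 p_a(C) - 2 = C \cdot (C + K_S)$ yields, after a short expansion using $f_i^2 = 0$,
$$ 2 p_a(C) - 2 \;=\; 2 n_1 n_2 + 2 n_1 g_1 + 2 n_2 g_2 - 2(n_1 + n_2), $$
so $p_a(C) = n_1 g_1 + n_2 g_2 + (n_1 - 1)(n_2 - 1)$. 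Since $g(X) = p_a(C) - \sum_{p \in C} \delta_p \le p_a(C)$, the theorem follows.

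The only slightly delicate point is the birationality of $\phi$, and this is immediate from the compositum hypothesis; everything else is routine intersection theory on a product of curves. The inequality in the theorem becomes an equality precisely when the image $C$ is smooth.
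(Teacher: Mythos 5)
The paper itself does not prove this theorem; it only cites \cite[Theorem III.10.3]{Sti} (a purely function-field-theoretic proof via Riemann--Roch dimension counts) and \cite[Theorem 3.5]{Ac2}. So your geometric route through the product surface is in any case a different approach, and most of it is sound: the reduction to $k=\overline{k}$, the birationality of $\phi$ onto $C$ from the compositum hypothesis, the adjunction formula $2p_a(C)-2=C\cdot(C+K_S)$, and $g(X)\le p_a(C)$ are all correct.

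However, there is a genuine gap at the step ``hence $C\sim n_2f_1+n_1f_2$.'' The two intersection numbers $C\cdot f_1=n_1$ and $C\cdot f_2=n_2$ determine the class of $C$ only if $\mathrm{NS}(S)$ has rank $2$, and on a product $X_1\times X_2$ this typically fails: any correspondence between $X_1$ and $X_2$ (and your curve $C$ is itself one) contributes extra classes. Concretely, take $X_1=X_2=E$ an elliptic curve and $C$ the graph of multiplication by $2$; then $C\cdot f_1=1$, $C\cdot f_2=4$, but $C^2=0\neq 2\cdot 1\cdot 4$, so $C\not\equiv 4f_1+f_2$ and your claimed equality $p_a(C)=n_1g_1+n_2g_2+(n_1-1)(n_2-1)$ is false (here $p_a(C)=1$ while the right-hand side is $5$). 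The standard repair is the Hodge index theorem: setting $D=C-n_2f_1-n_1f_2$ one has $D\cdot(f_1+f_2)=0$ with $f_1+f_2$ ample, hence $D^2\le 0$ and therefore $C^2\le 2n_1n_2$; feeding this \emph{inequality} into adjunction gives $p_a(C)\le n_1g_1+n_2g_2+(n_1-1)(n_2-1)$, which together with $g(X)\le p_a(C)$ yields the theorem. So the final bound survives, but only after importing the Hodge index theorem, and your closing remark that equality holds precisely when $C$ is smooth is incorrect as stated (one also needs $D\equiv 0$, i.e.\ $D^2=0$).
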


A proof can be found in \cite[Theorem III.10.3]{Sti}. Or see \cite[Theorem 3.5]{Ac2} for a proof in the language of Riemann surfaces.



\begin{Prop}\cite{Sch}\label{prop:unique}
Let $K$ be a field of characteristic $0$ and $X$ a curve defined over $K$ of genus 
$g\geq 6$. If $X$ is bielliptic, then the bielliptic involution $v$ is unique, defined 
over $K$ and lies in the center of $Aut(X)$.
\end{Prop}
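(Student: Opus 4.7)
The plan is to establish uniqueness of $v$ first; once that is done, the statements that $v$ is $K$-rational and central in $Aut(X)$ follow formally by a conjugation argument, since any conjugate of a bielliptic involution is again bielliptic.

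For uniqueness, I would argue by contradiction: suppose $v_1\neq v_2$ are two distinct bielliptic involutions of $X$, and set $L=\overline{K}(X)$ and $F_i=L^{\langle v_i\rangle}$. By hypothesis each $F_i$ is the function field of an elliptic curve, so has genus $1$, and $[L:F_i]=2$. Since $g(X)\geq 2$, the group $Aut(X)$ is finite by Hurwitz, so $G=\langle v_1,v_2\rangle$ is finite and $L/L^G$ is Galois with group $G$. Under the Galois correspondence, the compositum $F_1F_2$ corresponds to $\langle v_1\rangle\cap\langle v_2\rangle=\{1\}$ (since $v_1\neq v_2$), hence $F_1F_2=L$. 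Castelnuovo's inequality applied with $n_1=n_2=2$, $g_1=g_2=1$ then yields
\[ g(X)\leq 2\cdot 1+2\cdot 1+(2-1)(2-1)=5, \]
contradicting $g(X)\geq 6$.

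For the remaining two assertions, observe that for any $\phi\in Aut(X)$ the involution $\phi v\phi^{-1}$ is again bielliptic, because its quotient is isomorphic via $\phi$ to $X/\langle v\rangle\cong E$. By the uniqueness just established, $\phi v\phi^{-1}=v$, so $v$ lies in the center of $Aut(X)$. Applying the same reasoning with $\sigma\in Gal(\overline{K}/K)$ in place of $\phi$: the conjugate $v^\sigma\in Aut(X_{\overline{K}})$ is an involution whose quotient is (the $\sigma$-twist of) the elliptic curve $E=X/\langle v\rangle$, hence is again a bielliptic involution of $X_{\overline{K}}$; by uniqueness $v^\sigma=v$ for every $\sigma$, so $v$ is defined over $K$.

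I do not expect a real obstacle here. The only step that needs a bit of care is the equality $F_1F_2=L$: it relies on knowing that $L/L^G$ is a \emph{finite} Galois extension so that the Galois correspondence applies, which is guaranteed precisely by the Hurwitz bound on $|Aut(X)|$ for $g\geq 2$. Modulo that, the whole argument is standard Galois theory plus a single application of Castelnuovo's inequality.
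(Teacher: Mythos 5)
Your proposal is correct and takes essentially the same route as the paper: uniqueness follows from Castelnuovo's inequality applied to the two genus-$1$ index-$2$ subfields, whose compositum is the full function field because $\langle v_1\rangle\cap\langle v_2\rangle=\{1\}$, and then centrality and $K$-rationality follow by conjugating $v$ by automorphisms and by $Gal(\overline{K}/K)$. You merely spell out the Galois-theoretic details that the paper's one-line appeal to Castelnuovo leaves implicit.
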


\begin{proof}
Suppose $X$ has two bielliptic involutions, then the genus $g\leq 5$ due to Castelnuovo's Inequality.
Thus $g\geq6$ implies uniqueness. Conjugating $v$ by an element from $Aut(X)$ or letting 
$Gal(\overline{K}/K)$ act on $v$, we get another bielliptic involution; so the uniqueness
implies that $v$ is central and defined over $K$. 
\end{proof}

\begin{Rk}
Note however that even if the bielliptic involution $v$ of $X$ is defined over $K$ this does not 
always imply that the genus $1$ curve $X/\langle v\rangle$ is elliptic over $K$, as it might not 
have any $K$-rational points. This problem does of course not occur if $X$ has a $K$-rational point.
\end{Rk} 
 
Under certain condition we can now give a more precise version of Proposition \ref{image}.
 
\begin{Lem}\label{galoiscover} 
Let $X$ be a curve of genus $g(X)\ge 6$ and $\phi :X\to Y$ a finite Galois cover with group
$G=Gal(X/Y)$ such that $g(Y)\ge 2$. If $X$ has a bielliptic involution $v$, then:
\begin{itemize} 
	\item[(a)] $v\not\in G$.
	\item[(b)] $v$ commutes with all elements in $G$.
	\item[(c)] $G$ induces an isomorphic group $\widetilde{G}$ of automorphisms 
	of the genus $1$ curve $X/\langle v\rangle$.
	\item[(d)] $v$ induces a hyperelliptic or bielliptic involution $\widetilde{v}$ on $Y$.
\end{itemize} 	 
\end{Lem}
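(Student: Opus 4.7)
The plan is to derive everything from the centrality of $v$ in $\operatorname{Aut}(X)$, which is guaranteed by Proposition~\ref{prop:unique} since $g(X)\geq 6$. Part (b) is then immediate: every $g\in G\subseteq \operatorname{Aut}(X)$ commutes with the central element $v$.

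For (a), suppose $v\in G$. Then the quotient $X\to Y=X/G$ factors as $X\to X/\langle v\rangle\to Y$. Since $X/\langle v\rangle$ has genus $1$, any further (finite) quotient has genus at most $1$, contradicting $g(Y)\geq 2$. For (c), (b) lets each $g\in G$ descend to an automorphism $\widetilde g$ of $X/\langle v\rangle$, giving a homomorphism $G\to \widetilde G$. An element of the kernel acts on $X$ as an element of $\langle v\rangle$; that is, the kernel is $G\cap\langle v\rangle$, which equals $\{1\}$ by (a). So $G\cong\widetilde G$.

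For (d), by (b), $v$ descends to an involution $\widetilde v$ of $Y=X/G$. To see that $\widetilde v$ is nontrivial, note that otherwise $v$ would act trivially on the function field $k(X)^G = k(Y)$, and by Galois theory this forces $v\in G$, contradicting (a). Next, since $v$ commutes with $G$ and is not in $G$, the group $\langle v\rangle\times G$ acts on $X$, and the quotient can be computed in two ways:
\[
Y/\langle \widetilde v\rangle \;=\; X/(\langle v\rangle\times G)\;=\;(X/\langle v\rangle)/\widetilde G.
\]
Thus $Y/\langle \widetilde v\rangle$ is a finite quotient of the genus $1$ curve $X/\langle v\rangle$, so has genus at most $1$. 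Since $g(Y)\geq 2$, the involution $\widetilde v$ is nontrivial and has quotient of genus $\leq 1$, making it a hyperelliptic or bielliptic involution of $Y$.

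Main obstacle: there is no genuinely hard step; the argument is essentially careful bookkeeping with quotients. The only point requiring attention is the identification $Y/\langle\widetilde v\rangle = X/(\langle v\rangle\times G)$ in (d), together with the Galois-theoretic observation that nontriviality of $\widetilde v$ on $Y$ is equivalent to $v\notin G$.
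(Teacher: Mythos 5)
Your proposal is correct and follows essentially the same route as the paper: centrality of $v$ from Proposition \ref{prop:unique} gives (b) and hence (c), (a) follows because a quotient of the genus $1$ curve $X/\langle v\rangle$ cannot have genus $\ge 2$, and (d) comes from identifying $Y/\langle\widetilde v\rangle$ with $X/\langle G,v\rangle$, a quotient of $X/\langle v\rangle$. You merely spell out details the paper leaves implicit (nontriviality of $\widetilde v$, triviality of the kernel $G\cap\langle v\rangle$), which is fine.
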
 
 
\begin{proof}
(a) is clear because $g(Y)\ge 2$.  \\
(b) $v$ is central because $g(X)\ge 6$.\\
(c) follows from (b).\\
(d) From (b) it is clear that $v$ induces an involution $\widetilde{v}$ on $Y$.
As $Y/\langle\widetilde{v}\rangle =X/\langle G,v\rangle$ is a quotient of the
genus $1$ curve $X/\langle v\rangle$, we have $g(Y/\langle\widetilde{v}\rangle)\le 1$. 
\end{proof}

Note that among the list of the in total $76$ values of $N$ in 
Table \ref{classification} there are $21$ values for which 
there exist no intermediate curves $X_\Delta(N)$ namely for
$$N=1,2,\dots,12,14,18,22,23,46,47,59,83,94.$$
(See Table \ref{lowgenus}.) 
Under the condition $g_\Delta(N)\ge 2$, applying Proposition \ref{image} to Table \ref{classification}, 
we therefore have 
\begin{Lem}\label{lem:discard}
There are at most $48$ possible values of $N$ for which there might
exist an intermediate modular curve $X_\Delta(N)$ that is bielliptic, 
namely:
\begin{quote}
$21,$ $24-26,$ $28-45$, $48-51,$ $53-56,$ $60-65,$ 
$69,$ $71,$ $72,$ $75,$ $79,$ $81,$ $89,$ $92,$ $95,$ $101,$ $119,$ $131$
\end{quote}
\end{Lem}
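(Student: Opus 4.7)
The plan is to reduce the problem to a finite enumeration supported by the appendix tables and Proposition \ref{image}. The key observation is that if $X_\Delta(N)$ is bielliptic then the finite map $X_\Delta(N)\to X_0(N)$ forces $X_0(N)$ itself to be subhyperelliptic or bielliptic. Thus only the $76$ values of $N$ listed in Table \ref{classification} need to be considered, and the rest of the argument is pure bookkeeping.

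First I would discard the $21$ values $N=1,\ldots,12,14,18,22,23,46,47,59,83,94$ for which $(\Z/N\Z)^*/\{\pm 1\}$ contains no proper nontrivial subgroup; at such $N$ no intermediate curve $X_\Delta(N)$ exists at all, as recorded in Table \ref{lowgenus}. This leaves $55$ candidates. Next, in order for biellipticity even to be defined we need $g_\Delta(N)\ge 2$ for at least one intermediate $\Delta$. Consulting Table \ref{lowgenus}, I would identify the remaining $7$ values of $N$ among the $55$ for which every intermediate $X_\Delta(N)$ has genus $\le 1$; each such $N$ can be safely eliminated. Subtracting these $7$ from $55$ gives exactly the $48$ values displayed in the lemma.

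The only real work is the enumeration step verifying that there are precisely $7$ extra values to discard, which is routine: one applies the standard genus formula for $X_\Delta(N)$ (obtainable from the genera of $X_0(N)$ and $X_1(N)$ together with the Hurwitz computation for the covering $X_\Delta(N)\to X_0(N)$, whose ramification is concentrated at cusps and elliptic points of orders $2$ and $3$) to each subgroup $\Delta\subseteq (\Z/N\Z)^*$ with $\{\pm 1\}\subsetneq \Delta \subsetneq (\Z/N\Z)^*$, for each of the $55$ remaining $N$. These computations are already tabulated in the appendix, so one simply reads off which $N$'s have the property that all listed intermediate genera are $\le 1$. The main obstacle, such as it is, is thus purely organizational: making sure the classification in Table \ref{classification} and the enumeration in Table \ref{lowgenus} are correctly cross-referenced; no new mathematical input is needed beyond Proposition \ref{image}.
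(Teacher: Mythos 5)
Your proposal is correct and follows essentially the same route as the paper: apply Proposition \ref{image} to the covering $X_\Delta(N)\to X_0(N)$ to restrict to the $76$ values in Table \ref{classification}, remove the $21$ values with no intermediate $\Delta$, and then remove the $7$ further values ($N=13,15,16,17,19,20,27$) for which every intermediate curve has genus at most $1$, leaving the stated $48$. The paper's own proof is exactly this bookkeeping, delegated to Tables \ref{lowgenus} and \ref{highgenus}.
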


For the reader's convenience, we tabulate in the Appendix in Tables \ref{lowgenus} 
and \ref{highgenus} all intermediate groups $\Delta_i$ for these values of $N$ 
together with their genera $g_{\Delta_i}(N)$ by using the genus formula in 
\cite[Theorem 1.1]{J-K3}.

First of all, we make a list of all $X_\Delta(N)$ that are rational, elliptic
or hyperelliptic: the rational and elliptic ones are listed in the Appendix in Table \ref{lowgenus} and the hyperelliptic ones are discussed in the next theorem. 

\begin{Thm}\label{thm:subhyper} 
$X_{\Delta_1}(21)$ is the only hyperelliptic curve.
\end{Thm}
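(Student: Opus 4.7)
The plan is to reduce to a finite case analysis and then eliminate each candidate via Castelnuovo's Inequality. If $X_\Delta(N)$ is hyperelliptic of genus $\ge 2$, then the hyperelliptic involution $v$ is unique and lies in the center of $\mathrm{Aut}(X_\Delta(N))$, so it commutes with the Galois group $G=\mathrm{Gal}(X_\Delta(N)/X_0(N))$. Either $v\in G$, in which case $X_0(N)=(X_\Delta(N)/\langle v\rangle)/(G/\langle v\rangle)$ is a quotient of $\mathbb{P}^1$ and hence rational, or $v$ descends to an involution on $X_0(N)$ whose quotient is itself a quotient of $\mathbb{P}^1$, so that $X_0(N)$ is subhyperelliptic. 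Combined with Ogg's classification this forces $N$ into the finite set of Lemma \ref{lem:discard} for which $X_0(N)$ is subhyperelliptic, and only the intermediate groups of Table \ref{highgenus} remain to be considered.

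For each such pair $(N,\Delta)$ I apply Castelnuovo's Inequality to the compositum of the hypothetical degree-$2$ hyperelliptic map $X_\Delta(N)\to\mathbb{P}^1$ and the natural degree-$n$ cover $\pi:X_\Delta(N)\to X_0(N)$ with $n=[(\Z/N\Z)^*:\Delta]$. Provided the hyperelliptic map does not factor through $\pi$, the inequality reads
$$g_\Delta(N)\le 2\,g_0(N)+n-1,$$
and for the majority of entries in Table \ref{highgenus} this bound is violated, immediately excluding hyperellipticity. When $g_0(N)=0$ the factoring through $\pi$ is automatic and the bound is vacuous; in those cases I replace $X_0(N)$ by the largest proper intermediate quotient $X_{\Delta'}(N)$ of positive genus recorded in Tables \ref{lowgenus} and \ref{highgenus} and rerun the same inequality.

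Any borderline candidates that survive Step 2 can be eliminated using the observation that a hyperelliptic curve of genus $\ge 4$ cannot also be bielliptic: if $v_h$ and $v_b$ are the two involutions, both are central, neither map factors through the other, and Castelnuovo applied to the pair yields $g\le 2\cdot 0+2\cdot 1+(2-1)(2-1)=3$. For every remaining candidate of genus $\ge 4$ in Table \ref{highgenus}, a bielliptic involution is already available from the lifts $\widehat{W}_d$ and their cosets $[a]\widehat{W}_d$ constructed in Section \ref{sec:autos} (the same ones that will reappear in Theorem \ref{thm:bielliptic}), so none of them is hyperelliptic.

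Finally I must verify that the surviving case, the genus-$3$ curve $X_{\Delta_1}(21)$, really is hyperelliptic. This I would do by computing a basis of weight-$2$ cusp forms for $\Gamma_{\Delta_1}(21)$ and checking that the canonical image in $\mathbb{P}^2$ is a conic rather than a smooth plane quartic, equivalently by exhibiting an explicit degree-$2$ function on the curve. The main obstacle I foresee is the case $g_0(N)=0$ in Step 2: there the naive Castelnuovo bound is empty and one must choose the replacement quotient $X_{\Delta'}(N)$ carefully for each offending $N$, which is also precisely where any overlooked hyperelliptic example could plausibly hide.
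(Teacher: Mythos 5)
There is a genuine gap, and it sits exactly where the paper itself identifies the difficulty. First, your Castelnuovo bound is misstated: with $F_1$ the rational subfield of index $2$ and $F_2$ the function field of $X_0(N)$ of index $n$, the inequality gives $g_\Delta(N)\le n\,g_0(N)+(n-1)$, not $2\,g_0(N)+n-1$; your version is spuriously strong for $n>2$, so the "majority of entries" you claim to kill in Step 2 are not actually killed by Castelnuovo. Checking the correct bound against Table \ref{highgenus} shows that a large number of candidates survive (e.g.\ $X_{\Delta_1}(29)$ of genus $8$ with $n=7$, $g_0=2$ gives the vacuous bound $20$), so Step 3 has to carry far more weight than you allow for.

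Second, Step 3 is false as stated. You claim every surviving candidate of genus $\ge 4$ carries a bielliptic involution of the form $[a]\widehat{W}_d$. But most entries of Table \ref{highgenus} are neither hyperelliptic nor bielliptic, and the two cases the paper singles out as delicate are explicit counterexamples: $X_{\Delta_4}(37)$ is not bielliptic at all (Lemma \ref{lem:delta37}), and $X_{\Delta_3}(37)$ is bielliptic only via \emph{exceptional} automorphisms -- its non-exceptional involutions each have exactly $2$ fixed points (Lemma \ref{lem:37biell}) -- so no $\widehat{W}_d$-type involution is available. Your fallback for $g_0(N)=0$ also fails for $X_{\Delta_1}(25)$ (genus $4$, not bielliptic), since $N=25$ has no intermediate quotient of positive genus. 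Finally, the genus-$3$ candidates other than $X_{\Delta_1}(21)$ (e.g.\ $X_{\Delta_1}(24)$, $X_{\Delta_2}(24)$, $X_{\Delta_2}(36)$, $X_{\Delta_2}(49)$) are never addressed: the "hyperelliptic of genus $\ge 4$ cannot be bielliptic" trick is unavailable there. For comparison, the paper disposes of all cases except $N=37$ by citing \cite{I-M} and \cite{J-K3}, and for $N=37$ it argues directly with the action of the known automorphism groups on the $10$ fixed points of a putative hyperelliptic involution (divisibility by $3$, resp.\ cyclicity of point stabilizers) -- an argument of a quite different nature from yours, and one you would need some substitute for.
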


\begin{proof}
	This is essentially shown in \cite{I-M} and \cite{J-K3}. However, the
	hyperellipticity of $X_{\Delta_1}(21)$ had been overlooked in \cite{I-M}.
	\par
	Moreover, the proof that $X_{\Delta_3}(37)$ is not hyperelliptic in
	\cite{I-M} is based on the claim from \cite{Mo} that this curve has
	no exceptional automorphisms. But we shall see in Lemma \ref{lem:37biell}
	and Theorem \ref{thm:Aut37} that $X_{\Delta_3}(37)$ actually has exceptional
	automorphisms. 
	\par
	So, concerning non-hyperellipticity, we want to provide a proof 
	at least for $N=37$. Obviously it suffices to treat the two maximal 
	cases $\Delta_3$ and $\Delta_4$. 
	\par
	The curve $X_{\Delta_3}(37)$ is of genus $4$ and it is a degree $3$ Galois
	cover of the genus $2$ curve $X_0(37)$. By the Hurwitz formula this implies 
	that the covering map is unramified. If $X_{\Delta_3}(37)$ were hyperelliptic,
	$Gal(X_{\Delta_3}(37)/X_0(37))$, a group of order $3$, would act without 
	fixed points on the $10$ fixed points of the hyperelliptic involution.
	\par
	The automorphism group of $X_{\Delta_4}(37)$ has the subgroup 
	$\langle [2], \widehat{W}_{37}\rangle$, of order $4$, whose quotient 
	is the elliptic curve $X_0^+(37)$. If $X_{\Delta_4}(37)$ were hyperelliptic, 
	this group would act on the $10$ fixed points of the hyperelliptic 
	involution. So at least one of these points would be fixed by the 
	hyperelliptic involution and another involution, contradicting the 
	fact that the stabilizer of a point is always cyclic.
\end{proof}


Now we start to apply various criteria that can show non-biellipticity.

\subsection{Cusps}
We apply Lemma \ref{galoiscover} to $X=X_{\Delta}(N)$ and $Y=X_0(N)$.

\begin{Lem}\label{lem:cusp1}
The curves $X_\Delta(N)$ are not bielliptic curves for the following $N:$
\begin{quote}
$31,$ $43,$ $53,$ $61,$ $65,$ $71,$
$75,$ $79,$ $89,$ $95,$ $101,$ $119,$ $131$
\end{quote}
Note that for these $N$ we have $g_\Delta(N)\geq 6$ for all possible $\Delta$.
\end{Lem}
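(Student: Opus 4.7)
Let $N$ be one of the listed values and suppose, for contradiction, that some intermediate $X_\Delta(N)$ admits a bielliptic involution $v$. Since $g_\Delta(N)\ge 6$ for every intermediate $\Delta$, Proposition \ref{prop:unique} guarantees that $v$ is unique, central in $Aut(X_\Delta(N))$, and defined over $\Q$. Because $X_\Delta(N)\to X_0(N)$ is Galois with $g(X_0(N))\ge 2$, Lemma \ref{galoiscover}(d) descends $v$ to a hyperelliptic or bielliptic involution $\widetilde{v}$ of $X_0(N)$. For all $N$ on our list the automorphism group $Aut(X_0(N))$ coincides with the Atkin--Lehner group (by Kenku--Momose for prime $N\neq 37$, and by the absence of exceptional automorphisms for the composite $N\in\{65,75,95,119\}$), so necessarily $\widetilde{v}=W_d$ for some divisor $d\mid N$ with $\gcd(d,N/d)=1$.

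The contradiction will come from comparing the fields of definition of the cusps swapped by $W_d$. Lemma \ref{lem:cuspsfieldofdef}(b)--(c) tells us that, on $X_\Delta(N)$, the cusps above $0\in X_0(N)$ are always $\Q$-rational, whereas the cusps above $\infty$ are defined only over $\Q(\zeta_N)^\Delta\neq\Q$. Since $v$ is defined over $\Q$, it must send $\Q$-rational cusps to $\Q$-rational cusps. The Fricke involution $W_N$ exchanges the cusps $0$ and $\infty$, so $\widetilde{v}=W_N$ is immediately ruled out. This disposes of every prime $N$ in the list, because $W_N$ is then the only nontrivial Atkin--Lehner involution available.

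For the composite values $N\in\{65,75,95,119\}$ I would first list the bielliptic Atkin--Lehner involutions of $X_0(N)$ from \cite{B1}. For each non-Fricke candidate $W_d$ (with $d\neq N$), $W_d$ sends the cusp $0$ to the cusp $1/d$ on $X_0(N)$, and Lemma \ref{lem:cuspsfieldofdef}(a) says the cusps of $X_\Delta(N)$ above $1/d$ are defined over $\Q(\zeta_d)^{\Delta^{(d)}}$. The cusp argument then goes through provided this field is strictly larger than $\Q$, equivalently, provided $\Delta$ does \emph{not} contain the full subgroup
$$\{a\in(\Z/N\Z)^*:a\equiv 1\pmod{N/d}\}.$$
The main obstacle, and the only real work, is this case-by-case bookkeeping against the intermediate subgroups listed in Table \ref{highgenus}: for each bielliptic $W_d$ on $X_0(N)$ and each intermediate $\Delta$ one checks that $\Delta$ is too small to swallow the indicated subgroup, which completes the contradiction.
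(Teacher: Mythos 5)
Your core argument is exactly the paper's: for $g_\Delta(N)\ge 6$ the bielliptic involution $v$ is unique and hence defined over $\Q$ (Proposition \ref{prop:unique}), it descends to a hyperelliptic or bielliptic involution of $X_0(N)$ (Lemma \ref{galoiscover}), and if that involution is $W_N$ then $v=[a]\widehat{W}_N$ sends the $\Q$-rational cusps above $0$ to the non-rational cusps above $\infty$ (Lemma \ref{lem:cuspsfieldofdef}), a contradiction. This disposes of the prime $N$ completely and correctly.

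Where you diverge from the paper is in the composite cases $N\in\{65,75,95,119\}$, and there your proof is not actually finished. The whole point of the way the paper groups these thirteen values of $N$ together is that, by the tables of Ogg \cite{O2} and Bars \cite{B1}, for \emph{every} $N$ on this list --- composite ones included --- the \emph{only} hyperelliptic or bielliptic involution of $X_0(N)$ is the Fricke involution $W_N$. (For instance, on $X_0(65)$ the partial involutions $W_5$ and $W_{13}$ have no fixed points, so their quotients have genus $3$, not $1$.) Hence the single Fricke/cusp-field argument already covers all thirteen values, and no partial Atkin--Lehner involution ever needs to be considered. You instead announce, but do not carry out, a case-by-case analysis of non-Fricke $W_d$ via Lemma \ref{lem:cuspsfieldofdef}(a); that is precisely the method of the paper's Lemma \ref{lem:cusp2}, which is reserved for the $N$ where partial involutions genuinely are bielliptic, and there the verification really does depend on $\Delta$ (it fails, e.g., for $X_{\Delta_3}(39)$). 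As written, your proposal therefore has a gap for $N=65,75,95,119$: you neither perform the bookkeeping nor observe that consulting Bars's table makes it vacuous. The gap is easily closed --- just record that $W_N$ is the unique bielliptic involution for these four $N$ as well --- but it must be closed. A second, minor point: you appeal to the full automorphism group $Aut(X_0(N))$ (Kenku--Momose) to force $\widetilde v$ to be of Atkin--Lehner type; the paper gets by with the weaker and more directly cited fact that Ogg and Bars list all hyperelliptic and bielliptic involutions explicitly, which is all that Lemma \ref{galoiscover}(d) requires.
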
 

\begin{proof}
Let $N$ be one of the numbers of the above list.
Then  $g(X_0(N))\ge 2$ and $X_0(N)$ is either hyperelliptic or bielliptic, but not both.
From the tables in \cite{B1,O2}, we know that the hyperelliptic or bielliptic involution 
is the Fricke involution $W_N$. Suppose that $X_\Delta(N)$ is bielliptic.
Then by Lemma \ref{galoiscover} bielliptic involution $v$ induces $\widetilde{v}=W_N$ on 
$X_0(N)$; so $v$ must be of the form $[a]\widehat{W}_N$. This is a contradiction, because
$v$ must be defined over $\Q$ by Proposition \ref{prop:unique}, but $[a]\widehat{W}_N$ is 
not by Lemma \ref{Frickeinvolutions}. 
\end{proof}

Ultimately this proof is based on the fact that $[a]\widehat{W}_N$ maps some rational 
cusps to non-rational cusps. (Compare the proof of Lemma \ref{Frickeinvolutions}.)
Using part (a) of Lemma \ref{lem:cuspsfieldofdef} instead of
part (c), this can also be applied to some other cases with partial Atkin-Lehner involutions,
but whether the criterion works will then also depend on $\Delta$. 

\begin{Lem}\label{lem:cusp2} The following $X_\Delta(N)$ with $g_\Delta(N)\ge 6$ are not bielliptic:
\begin{quote}
$X_{\Delta_1}(29)$, $X_{\Delta_1}(33)$, $X_{\Delta_1}(35)$, $X_{\Delta_2}(35)$,
$X_{\Delta_1}(38)$, $X_{\Delta_2}(39)$, $X_{\Delta_1}(41)$, $X_{\Delta_2}(41)$,
$X_{\Delta_3}(41)$, $X_{\Delta_1}(42)$, $X_{\Delta_2}(42)$, $X_{\Delta_1}(51)$,
$X_{\Delta_2}(51)$, $X_{\Delta_3}(51)$, $X_{\Delta_1}(55)$, $X_{\Delta_2}(55)$,
$X_{\Delta_3}(55)$, $X_{\Delta_1}(60)$, $X_{\Delta_2}(60)$, $X_{\Delta_3}(60)$, 
$X_{\Delta_4}(60)$, $X_{\Delta_5}(60)$, $X_{\Delta_6}(60)$, $X_{\Delta_1}(62)$, 
$X_{\Delta_2}(62)$, $X_{\Delta_1}(69)$, $X_{\Delta_1}(92)$, $X_{\Delta_2}(92)$.
\end{quote}
\end{Lem}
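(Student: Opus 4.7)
The strategy mirrors that of Lemma \ref{lem:cusp1}, now using the full strength of Lemma \ref{lem:cuspsfieldofdef}(a) to handle partial Atkin--Lehner involutions.

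For each listed curve $g_\Delta(N)\ge 6$, so Proposition \ref{prop:unique} forces any bielliptic involution $v$ of $X_\Delta(N)$ to be unique, central in $\mathrm{Aut}(X_\Delta(N))$, and defined over $\Q$. Applying Lemma \ref{galoiscover}(d) to the Galois cover $X_\Delta(N)\to X_0(N)$, $v$ descends to a hyperelliptic or bielliptic involution $\widetilde{v}$ of $X_0(N)$. From the tables in \cite{O2} and \cite{B1}, for every $N$ in our list the hyperelliptic/bielliptic involutions of $X_0(N)$ are all Atkin--Lehner involutions $W_d$ with $\gcd(d,N/d)=1$. Consequently $v=[a]\widehat{W}_d$ for some divisor $d$ and some $a\in(\Z/N\Z)^*$.

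The plan is to rule out such lifts by tracking cusps. By Lemma \ref{lem:cuspsfieldofdef}(b) the cusps of $X_\Delta(N)$ above $\binom{0}{1}$ are all $\Q$-rational, while $W_d$ sends $\binom{0}{1}$ to $\binom{1}{d}$ on $X_0(N)$. Hence $[a]\widehat{W}_d$ maps a $\Q$-rational cusp to a cusp above $\binom{1}{d}$, and by Lemma \ref{lem:cuspsfieldofdef}(a) (applicable since $\gcd(d,N/d)=1$) the field of definition of the latter is $\Q(\zeta_d)^{\Delta^{(d)}}$. Whenever $\Delta^{(d)}\subsetneq(\Z/d\Z)^*$, no lift $[a]\widehat{W}_d$ of $W_d$ can be $\Q$-rational, contradicting the rationality of $v$. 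The case $d=N$ is just the argument of Lemma \ref{lem:cusp1}, using part (c) of the cusp lemma.

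What remains is to verify, for each $(N,\Delta_i)$ in the list and each candidate divisor $d$ such that $W_d$ is hyperelliptic or bielliptic on $X_0(N)$, that $\Delta_i^{(d)}\subsetneq(\Z/d\Z)^*$. This is a mechanical but bookkeeping-heavy case check using Tables \ref{lowgenus}, \ref{highgenus} and the lists of \cite{O2, B1}: one reads off $\Delta_i$, computes $\Delta_i^{(d)}$ as the image of $\{a\in\Delta_i : a\equiv 1\,(\bmod\,N/d)\}$ in $(\Z/d\Z)^*$, and confirms that it fails to exhaust $(\Z/d\Z)^*$. The main obstacle is therefore not conceptual but the volume of cases ($28$ curves, each with one or two candidate $d$); in every instance the subgroup $\Delta_i$ is small enough to make $\Delta_i^{(d)}$ manifestly proper, and no structural difficulty arises.
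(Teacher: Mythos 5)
Your proposal is correct and follows essentially the same route as the paper's proof: the unique, $\Q$-rational bielliptic involution must induce an Atkin--Lehner involution $W_d$ on $X_0(N)$, and Lemma \ref{lem:cuspsfieldofdef}(a) shows it would send the rational cusps above $0$ to non-rational cusps above $\sm 1\\d\esm$ whenever $\Delta^{(d)}\subsetneq(\Z/d\Z)^*$, which the case-by-case check confirms. The only implicit point worth noting is that the check necessarily fails for $d=2$ (where $\Delta^{(2)}$ is trivially all of $(\Z/2\Z)^*$), so one must also observe, as the paper does, that $W_2$ never occurs as a hyperelliptic or bielliptic involution of $X_0(N)$ for the listed $N$.
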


\begin{proof}
Suppose that a curve $X_\Delta(N)$ from the above list is bielliptic.
Since $g_\Delta(N)\geq 6$, $X_\Delta(N)$ has a unique bielliptic involution 
$v$ which induces an involution $\tilde v$ on $X_0(N)$. Note that by \cite{O2}
and \cite{B1} any hyperelliptic or bielliptic involution on the corresponding 
$X_0(N)$ is equal to one of the Atkin-Lehner involutions $W_d$ with $d\neq 2$.
Thus $\tilde v$ should be $W_d$ with $d\neq 2$.
Note that $W_d$ is represented by a matrix
$\begin{pmatrix} dx & y\\ Nz & dw\end{pmatrix}$
where $x,y,z,w\in\mathbb Z$ and $\det W_d=d.$ Because of $d|N$ this implies 
$(y,d)=1$. Furthermore, we can choose $w=1$. Then $\tilde v$ maps the cusp
$0$ to $\sm y\\d\esm.$
By using Lemma \ref{lem:cuspsfieldofdef}, one can check that for the given 
values of $N$, $\Delta_i$ and $d$ the cusps lying above $\sm y\\d\esm$ are 
non-rational. Thus $v$ maps rational cusps to non-rational cusps. This gives 
rise to a contradiction.
\par 
For example $X_0(39)$ has a hyperelliptic involution $W_{39}$ and a unique bielliptic 
involution $W_3$. If $\widetilde{v}=W_3$, then $v$ maps cusps above $0$ to cusps above
${1\choose 3}$. For $\Delta_2 =\{\pm 1, \pm 16, \pm 17\}$ the latter ones are only defined 
over $\Q(\zeta_3)$ because $\Delta_2^{(3)}=\{1\}$. 
\end{proof}

Note that in contrast to $X_{\Delta_2}(39)$ for $X_{\Delta_3}(39)$ this approach does not work,
because then the cusps of $X_{\Delta_3}(39)$ above ${y\choose 3}$ are rational. 

\begin{Lem}\label{lem:39and40} The modular curves $X_{\Delta_3}(39)$, $X_{\Delta_4}(40)$ and 
$X_{\Delta_5}(40)$ are not bielliptic.
\end{Lem}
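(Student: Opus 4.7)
The plan is to combine Lemma \ref{galoiscover} with the fixed-point counting algorithm of Section \ref{sec:Atkin}. Suppose for contradiction that one of the three curves $X = X_\Delta(N)$ carries a bielliptic involution $v$. From Table \ref{highgenus} one checks that the genus $g = g_\Delta(N)$ is at least $6$ in each case, so Proposition \ref{prop:unique} forces $v$ to be unique, $\Q$-rational, and central in $Aut(X)$, while Lemma \ref{galoiscover}(d) forces the induced automorphism $\tilde v$ of $X_0(N)$ to be a hyperelliptic or bielliptic involution. For $N \in \{39, 40\}$ the automorphism group of $X_0(N)$ is generated by Atkin-Lehner involutions, and the classifications in \cite{O2} and \cite{B1} leave only a short list of candidates for $\tilde v$: $\{W_3, W_{39}\}$ for $N = 39$, and the corresponding hyperelliptic and bielliptic Atkin-Lehner involutions of $X_0(40)$.

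The Fricke candidates $W_N$ are immediately disposed of by Lemma \ref{Frickeinvolutions}: no lift $[a]\widehat{W}_N$ is defined over $\Q$. Note that for $X_{\Delta_3}(39)$ the remaining candidate $W_3$ is precisely the one flagged in the paragraph preceding the lemma as being immune to the cuspidal obstruction of Lemma \ref{lem:cusp2}, so the rationality-of-cusps trick is no longer available and a genuine fixed-point calculation is required. For each remaining $W_d$ with $d<N$ and each coset $a \in (\Z/N\Z)^*/\Delta$, Lemma \ref{thm:whenaut} together with the trace analysis of Section \ref{sec:autos} determines whether $[a]\widehat{W}_d$ is an admissible involutive lift; these are the only possibilities for $v$.

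For each admissible candidate $[a]\widehat{W}_d$ one counts its fixed points on $X_\Delta(N)$ following the template of Example \ref{calcfixedpoints}: enumerate the inequivalent quadratic forms $[Nz, -2dx, -y]$ (or $[Nz, d(1-2x), -y]$ for $d \in \{2,3\}$) representing the non-cuspidal fixed points of $W_d$ on $X_0(N)$, form the corresponding elliptic elements $W_{d,j}$, and for every $[g] \in \Gamma_0(N)/\Gamma_\Delta(N)$ test whether $[a]\widehat{W}_d\,[g]\,W_{d,j}^{-1}[g]^{-1}$ lies in $\Gamma_\Delta(N)$. Since a bielliptic involution has exactly $2g - 2$ fixed points by Hurwitz, it suffices to verify that no pair $(a, d)$ attains this count. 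The main obstacle is the bookkeeping in this last step: for the partial Atkin-Lehner candidates $W_3$ on $X_{\Delta_3}(39)$ and $W_5$, $W_8$ on $X_{\Delta_i}(40)$ for $i = 4, 5$, one must run through every admissible coset $[a]$ and every quadratic-form orbit, and one expects in each case the tally to fall strictly short of $2g - 2$, thereby excluding biellipticity.
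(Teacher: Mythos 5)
Your overall framing (reduce to candidates for the induced involution $\tilde v$ on $X_0(N)$, kill the Fricke candidate via Lemma \ref{Frickeinvolutions}, then examine the remaining candidates) matches the paper's, but there are two genuine problems with the way you propose to finish.

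First, you assert that for $N=40$ the remaining candidates for $\tilde v$ are ``hyperelliptic and bielliptic \emph{Atkin-Lehner} involutions of $X_0(40)$.'' This is false: the hyperelliptic involution of $X_0(40)$ is $w=\sm -10&1\\-120&10\esm$, which is \emph{not} of Atkin-Lehner type (cf.\ \cite{O2}); the paper points this out explicitly. Your entire computational apparatus --- the quadratic forms $[Nz,-2dx,-y]$, the elliptic elements $W_{d,j}$, the test of $[a]\widehat{W}_d[g]W_{d,j}^{-1}[g]^{-1}\in\Gamma_\Delta(N)$ --- is built for Atkin-Lehner matrices $W_d$ and simply does not apply to $w$. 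So the one candidate for which a fixed-point count cannot succeed by crude bounds (a hyperelliptic involution of the genus $3$ curve $X_0(40)$ has $8$ fixed points, hence up to $16\ge 2g-2=12$ points above it in the degree $2$ cover) is exactly the candidate your method fails to address. The paper disposes of it by a different argument: $w$ sends the cusp $\sm 1\\4\esm$ to $\infty$, the cusps of $X_{\Delta_4}(40)$ and $X_{\Delta_5}(40)$ above $\sm 1\\4\esm$ are $\Q$-rational while those above $\infty$ are not, so no $\Q$-rational lift of $w$ exists.

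Second, for the candidates your method does cover, you never actually carry out the computation --- ``one expects the tally to fall strictly short'' is not a proof --- and the full Section \ref{sec:Atkin} machinery is unnecessary anyway. The paper uses only the trivial upper bound: the fixed points of $v$ lie above those of $\tilde v$, so $v$ has at most $\deg(\phi)$ times as many. For $X_{\Delta_3}(39)$ (genus $9$, so $v$ needs $16$ fixed points) the bielliptic involution $W_3$ of $X_0(39)$ has $4$ fixed points and the cover has degree $3$, giving at most $12<16$; for $X_{\Delta_4}(40)$ and $X_{\Delta_5}(40)$ (genus $7$, needing $12$) any bielliptic involution of $X_0(40)$ has $4$ fixed points and the degree is $2$, giving at most $8<12$. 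No enumeration of quadratic forms or cosets $[a]$ is needed. You should replace the open-ended computation by this counting bound where it works, and supply the cusp-rationality argument for the non-Atkin-Lehner hyperelliptic involution of $X_0(40)$ where it does not.
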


\begin{proof} Suppose $X_{\Delta_3}(39)$ is bielliptic. Since 
$X_{\Delta_3}(39)$ is of genus $9$, the bielliptic involution 
$v$ is unique and must have $16$ fixed points. The induced 
involution $\tilde v$ on $X_0(39)$ must be equal to $W_3$ or 
$W_{39}$ \cite{B1,O2}. By the same reason as in the proof of 
Lemma \ref{lem:cusp1}, $\tilde v$ is not $W_{39}$. Suppose 
$\tilde v$ is equal to $W_3$. Note that $W_3$ is the only 
bielliptic involution on $X_0(39)$ and it has 4 fixed points 
\cite{B1}. Since the covering $X_{\Delta_3}(39)\to X_0(39)$ is 
of degree $3$, $v$ has at most $12$ fixed points on 
$X_{\Delta_3}(39)$, which is a contradiction.
\par 
For the same reason a bielliptic involution $v$ of $X_{\Delta_4}(40)$ or 
$X_{\Delta_5}(40)$ (both of genus $7$ and mapping with degree $2$ to 
$X_0(40)$, which has genus $3$) cannot induce a bielliptic involution
$\widetilde{v}$ on $X_0(40)$. 
\par 
The hyperelliptic involution of $X_0(40)$ is 
$w=\begin{pmatrix} -10&1\\-120&10\end{pmatrix}$ 
which is not of Atkin-Lehner type (cf. \cite{O2}).
Note that $w\sm 1\\4\esm =\infty$. 
By using \cite[Lemma 1.2]{I-M}, we know that the cusps of 
$X_{\Delta_4}(40)$ (resp. $X_{\Delta_5}(40)$) lying above $\sm 1\\4\esm$
are all rational. This can also easily be seen directly: The only possible
Galois conjugate of ${1\choose 4}$, namely ${-1\choose 4}$, gives the same 
cusp, because 
${-1\choose 4}={-9\ \ \ 2\choose 40\ \ -9}{1\choose 4}=[-9]{1\choose 4}$ 
with
$[-9]\in\Gamma_{\Delta_i}(40)$ ($i=4,5$).
So if $v$ induces $w$, we again get a contradiction from $v$ mapping rational 
cusps to non-rational ones.
\end{proof}

\subsection{Unramifed coverings}

The argumentation with the fixed points of the bielliptic involution in the proof 
of Lemma \ref{lem:39and40} can be used in a more systematic way. 

\begin{Lem}\label{lem:fp-method} 
Let $X$ be a curve of genus $g(X)\ge 6$ and $\phi:X\to Y$ a finite Galois covering
with $G=Gal(X/Y)$ such that $Y$ is not subhyperelliptic.
If $X$ has a bielliptic involution $v$, then the following must all hold:
\begin{itemize}
\item[(a)] $v$ induces a bielliptic involution $\widetilde{v}$ on $Y$.
\item[(b)] The only points of $X$ that are ramified in the covering
$X\to X/\langle G,v\rangle$ are the fixed points of $v$ and they have 
ramification index $2$.
\item[(c)] Every automorphism in $\langle G,v\rangle$ other than $v$ and the identity
is fixed point free.
\item[(d)] $\phi$ is totally unramified.
\item[(e)] $g(X)-1=\deg(\phi)(g(Y)-1)$.
\end{itemize} 
\end{Lem}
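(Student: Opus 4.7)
The plan is to extract (a) directly from Lemma \ref{galoiscover} and then to deduce (b)--(e) by a single double counting of ramification via the Hurwitz formula.

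Since $g(X)\ge 6$, Lemma \ref{galoiscover} applies and tells us that $v\notin G$, that $v$ is central in $Aut(X)$ (so in particular commutes with every element of $G$), and that $v$ induces an involution $\widetilde v$ on $Y$ which is hyperelliptic or bielliptic. The hypothesis that $Y$ is not subhyperelliptic rules out the hyperelliptic case, so $\widetilde v$ must be bielliptic; this is (a).

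For (b) I would analyse the Galois cover $\pi\colon X\to X/\langle G,v\rangle$. Centrality of $v$ together with $v\notin G$ yields $|\langle G,v\rangle|=2|G|$, and $X/\langle G,v\rangle=Y/\langle\widetilde v\rangle$ has genus $1$ by (a). The Hurwitz formula applied to $\pi$ (target genus $1$) gives that the total ramification degree equals $2g(X)-2$. On the other hand, Hurwitz applied to the degree-$2$ cover $X\to X/\langle v\rangle$ (whose target again has genus $1$) shows that $v$ has exactly $2g(X)-2$ fixed points, each of which already contributes at least $1$ to the ramification of $\pi$, since its stabilizer in $\langle G,v\rangle$ contains $\langle v\rangle$. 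Equality of the two counts then forces the stabilizer at every fixed point of $v$ to be exactly $\langle v\rangle$ and forbids any further ramified point; this is (b).

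Parts (c), (d), (e) should fall out at once. For (c), any $\alpha\in\langle G,v\rangle$ with a fixed point lies in some stabilizer, and (b) has just shown that the only stabilizers are $\{1\}$ and $\langle v\rangle$. For (d), since $v\notin G$, part (c) implies that no non-identity element of $G$ fixes any point, so $\phi$ is unramified. For (e), Hurwitz applied to the unramified cover $\phi$ gives $g(X)-1=\deg(\phi)(g(Y)-1)$. I do not anticipate any serious obstacle; the only point requiring a bit of care is the identification $X/\langle G,v\rangle=Y/\langle\widetilde v\rangle$ of genus $1$, which rests essentially on the centrality of $v$ from Lemma \ref{galoiscover}.
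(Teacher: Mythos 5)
Your proposal is correct and follows essentially the same route as the paper: both arguments rest on the observation that $X/\langle G,v\rangle=Y/\langle\widetilde{v}\rangle$ has genus $1$ and then extract (b)--(e) from the Hurwitz formula, deducing (c) from (b) via the cyclicity of point stabilizers. The only cosmetic difference is in how (b) is obtained --- the paper notes that $X/\langle v\rangle\to X/\langle G,v\rangle$ is a covering of genus $1$ curves and hence unramified, while you count the total ramification of $X\to X/\langle G,v\rangle$ against the $2g(X)-2$ fixed points of $v$; these are two equivalent bookkeepings of the same Hurwitz computation.
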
 

\begin{proof}
(a) is clear from Lemma \ref{galoiscover}, because $Y$ is not subhyperelliptic. Now 
$\widetilde{\phi}:X/\langle v\rangle \to X/\langle v,G\rangle=Y/\langle	\widetilde{v}\rangle$
as a covering of genus $1$ curves must be totally unramified. This immediately implies (b).
If $\sigma\in\langle G,v\rangle$ has a fixed point $P$ on $X$, then $P$ must be a fixed point
of $v$ by (b). Moreover, $\sigma\not\in\langle v\rangle$ would imply that the ramification 
index of $P$ is bigger than $2$. This proves (c). 
\par 
As a special case of (c), every element in $G$ is fixed point free, which means that 
$X\to X/G$ is totally unramified, in other words, (d) holds.
Finally, (e) is equivalent to (d) by the Hurwitz formula. 
\end{proof} 
 
Lemma \ref{lem:fp-method} is a useful criterion, as it is often quite easy to see that condition (e) is not satisfied.

\begin{Lem}\label{lem:fp-app} 
The following $X_\Delta(N)$ are not bielliptic:
\begin{quote}
$X_{\Delta_1}(37)$, $X_{\Delta_2}(37)$, $X_{\Delta_2}(40)$, $X_{\Delta_3}(40)$, $X_{\Delta_1}(44)$,
$X_{\Delta_2}(44)$, $X_{\Delta_3}(45)$, $X_{\Delta_4}(48)$,
$X_{\Delta_5}(48)$, $X_{\Delta_6}(56)$,
$X_{\Delta_7}(56)$, $X_{\Delta_6}(63)$, $X_{\Delta_7}(63)$,
$X_{\Delta_8}(63)$, $X_{\Delta_2}(64)$, $X_{\Delta_6}(72)$,
$X_{\Delta_7}(72)$.
\end{quote}
\end{Lem}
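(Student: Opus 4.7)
Plan: I will apply Lemma \ref{lem:fp-method} to each pair $(N,\Delta)$ in the list, using the numerical consequence (e) to produce a contradiction. Every intermediate cover $X_\Delta(N) \to X_{\Delta'}(N)$ (for $\Delta \subseteq \Delta' \subseteq (\Z/N\Z)^*$) is Galois with abelian group $\Delta'/\Delta$, so any such $\Delta'$ furnishes an admissible $\phi$. A preliminary check against Table \ref{highgenus} confirms that $g_\Delta(N) \ge 6$ for every curve on the list, so by Proposition \ref{prop:unique} the conjectural bielliptic involution is unique and central, and Lemma \ref{lem:fp-method} applies once a suitable $Y$ is fixed.

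For each $(N,\Delta)$ I would choose the target $Y = X_{\Delta'}(N)$ with $\Delta' \supsetneq \Delta$ of smallest possible index in $(\Z/N\Z)^*$ such that $Y$ is not subhyperelliptic (checked via Theorem \ref{thm:subhyper} and Tables \ref{lowgenus}, \ref{classification}). For most entries in the list ($N \in \{44, 45, 56, 63, 64, 72\}$) the choice $Y = X_0(N)$ works directly, since these $X_0(N)$ are neither rational, elliptic nor hyperelliptic. For the remaining values $N \in \{37, 40, 48\}$ the quotient $X_0(N)$ is itself subhyperelliptic (of genus $2$ or hyperelliptic of genus $3$), so I move one floor up the tower and take $Y = X_{\Delta'}(N)$ for the minimal $\Delta'$ making $X_{\Delta'}(N)$ non-subhyperelliptic. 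With the target chosen I verify in each case that
\begin{equation*}
g_\Delta(N) - 1 \ne [\Delta':\Delta]\cdot(g(Y) - 1),
\end{equation*}
which contradicts Lemma \ref{lem:fp-method}(e). Geometrically the left-hand side exceeds the right-hand side, forcing the covering $\phi$ to be ramified and so violating Lemma \ref{lem:fp-method}(d).

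The main obstacle is essentially bookkeeping: for each $(N,\Delta)$ one must collect, from Tables \ref{lowgenus} and \ref{highgenus}, the correct values of $g_\Delta(N)$, $g(Y)$, and $[\Delta':\Delta]$, and verify the strict inequality. The only genuinely delicate step is locating a suitable non-subhyperelliptic target $Y$ for the three values $N \in \{37, 40, 48\}$, where $X_0(N)$ itself is subhyperelliptic and one must climb one floor up the tower of intermediate quotients; in all remaining cases the natural choice $Y = X_0(N)$ suffices and the check becomes a routine tabulation.
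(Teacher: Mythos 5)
Your overall strategy is the same as the paper's: for each curve exhibit a Galois cover $\phi:X\to Y$ with $Y$ non-subhyperelliptic and check that the numerical condition (e) of Lemma \ref{lem:fp-method} fails. However, your restriction to covers $X_{\Delta}(N)\to X_{\Delta'}(N)$ \emph{at the same level} $N$ is a genuine gap that kills the argument for several curves on the list. For $X_{\Delta_4}(48)$ and $X_{\Delta_5}(48)$ there is no intermediate subgroup strictly between $\Delta_4$ (resp.\ $\Delta_5$), of order $8$, and $(\Z/48\Z)^*$, of order $16$; the only proper quotient at level $48$ is the hyperelliptic curve $X_0(48)$, so no admissible target $Y$ exists at all within level $48$. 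For $X_{\Delta_3}(40)$ the unique intermediate $\Delta'\supsetneq\Delta_3=\{\pm1,\pm19\}$ is $\Delta_6$, and there condition (e) is actually \emph{satisfied}: $g_{\Delta_3}(40)-1=9-1=8=2\,(5-1)=2\,(g_{\Delta_6}(40)-1)$, so no contradiction arises. For $X_{\Delta_1}(44)$ there is no intermediate $\Delta'$ at all ($(\Z/44\Z)^*/\{\pm1\}$ is cyclic of order $10$ and $\Delta_2$ does not contain $\Delta_1$), and the only option $X_0(44)$ again gives equality: $16-1=15=5\,(4-1)$. The paper escapes in exactly these cases by allowing the level to drop: it uses the degree-$2$ coverings $X_{\Delta_3}(40)\to X_1(20)$, $X_{\Delta_1}(44)\to X_1(22)$, $X_{\Delta_4}(48)\to X_{\Delta_1}(24)$ and $X_{\Delta_5}(48)\to X_{\Delta_2}(24)$, whose targets have genera $3$, $6$, $3$, $3$ and are non-subhyperelliptic, so that (e) visibly fails. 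Without this extra supply of coverings your proof does not go through for these four curves.

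A smaller point: your selection rule ``take $\Delta'$ of smallest index in $(\Z/N\Z)^*$'' also misfires for $X_{\Delta_2}(37)$, where the index-$2$ subgroup $\Delta_4$ contains $\Delta_2$ and gives the equality $10-1=9=3\,(4-1)$; you must instead use the index-$3$ subgroup $\Delta_3$ (degree-$2$ cover, $9\neq 2\cdot 3$), as the paper does. Since you state that you verify the inequality case by case this is recoverable, but it shows the rule cannot be applied mechanically. For the remaining curves ($N=37$ with $\Delta_1$, $N=40$ with $\Delta_2$, and all of $N=44$ with $\Delta_2$, $45$, $56$, $63$, $64$, $72$) your choices do work and coincide with, or are equivalent to, the paper's.
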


For each $X=X_\Delta(N)$ in the list of Lemma \ref{lem:fp-app}, we
suggest a finite Galois covering $\phi:X\to Y$ and its degree in 
Table \ref{tb:fp-app} which enables us to conclude from criterion 
(e) of Lemma \ref{lem:fp-method} that $X$ is not bielliptic.

\begin{center}
\begin{longtable}{c|c}
\caption{List of maps $\phi:X\to Y$ and their degrees}
\label{tb:fp-app}\\ $\phi:X\to Y$ & degree
 \\ \hline
 $X_{\Delta_1}(37)\to X_{\Delta_3}(37)$ & 3
 \\ \hline
 $X_{\Delta_2}(37)\to X_{\Delta_3}(37)$ & 2
 \\ \hline
 $X_{\Delta_2}(40)\to X_{\Delta_6}(40)$ & 2
 \\ \hline
 $X_{\Delta_3}(40)\to X_1(20)$ & 2
 \\ \hline
 $X_{\Delta_1}(44)\to X_1(22)$ & 2
 \\ \hline
 $X_{\Delta_2}(44)\to X_0(44)$ & 2
 \\ \hline
 $X_{\Delta_3}(45)\to X_0(45)$ & 3
 \\ \hline
 $X_{\Delta_4}(48)\to X_{\Delta_1}(24)$ & 2
 \\ \hline
 $X_{\Delta_5}(48)\to X_{\Delta_2}(24)$ & 2
 \\ \hline
 $X_{\Delta_6}(56)\to X_0(56)$ & $2$
 \\ \hline
 $X_{\Delta_7}(56)\to X_0(56)$ & $2$
 \\ \hline
 $X_{\Delta_6}(63)\to X_0(63)$ & $3$
 \\ \hline
 $X_{\Delta_7}(63)\to X_0(63)$ & $3$
 \\ \hline
 $X_{\Delta_8}(63)\to X_0(63)$ & $3$
 \\ \hline
 $X_{\Delta_2}(64)\to X_{\Delta_3}(64)$ & $2$
 \\ \hline
 $X_{\Delta_6}(72)\to X_0(72)$ & $2$
 \\ \hline
 $X_{\Delta_7}(72)\to X_0(72)$ & $2$
 \\ \hline
\end{longtable}
\end{center}


\subsection{Castelnuovo's inequality}

Consider $X_{\Delta_1}(34)$ of genus 9. Note that there is a
natural map $\phi: X_{\Delta_1}(34)\to X_{\Delta_1}(17)$ of degree
$3$, and $X_{\Delta_1}(17)$ is of genus 1. Suppose
$X_{\Delta_1}(34)$ is bielliptic. Then there is a map of degree
$2$ from $X_{\Delta_1}(34)$ to an elliptic curve $E.$ Let
$F$ (resp. $F_1,F_2$) be the function field of
$X_{\Delta_1}(34)$ (resp. $X_{\Delta_1}(17),E$). Applying
Castelnuovo's inequality, we get a contradiction. Thus
$X_{\Delta_1}(34)$ is not bielliptic. By using the same argument,
we have the following result:

\begin{Lem}\label{lem:castel-method} 
The following $X_\Delta(N)$ are not bielliptic:
\begin{quote}
$X_{\Delta_1}(34)$, $X_{\Delta_2}(45)$,
$X_{\Delta_1}(49)$, $X_{\Delta_1}(50)$, $X_{\Delta_1}(54)$,
$X_{\Delta_5}(56)$, $X_{\Delta_{10}}(63)$, $X_{\Delta_5}(72)$,
$X_{\Delta_8}(72)$, $X_{\Delta_2}(81)$.
\end{quote}
\end{Lem}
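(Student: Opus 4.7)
The plan is to generalize case by case the argument illustrated for $X_{\Delta_1}(34)$ just above. For each of the ten curves $X = X_\Delta(N)$ in the statement, I would exhibit a natural modular map $\phi: X \to Y$ of some degree $n_1$ onto a curve $Y$ of low genus $g_1$, induced by an inclusion $\Gamma_\Delta(N) \subseteq \pm\Gamma_{\Delta'}(N')$ (typically with $N' \mid N$), and then apply Castelnuovo's inequality to rule out biellipticity.

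Concretely, suppose $X$ is bielliptic with a degree $2$ map $\psi: X \to E$. Let $F$, $F_1$, $F_2$ denote the function fields of $X$, $Y$, $E$, so that $[F:F_1] = n_1$, $[F:F_2] = 2$, and $g(F_2) = 1$. Assuming the compositum condition $F = F_1 F_2$, Castelnuovo gives
\[
g_\Delta(N) \;\le\; n_1 g_1 + 2 + (n_1-1) \;=\; n_1(g_1 + 1) + 1.
\]
In each case I would choose $\phi$ so that this bound is strictly smaller than $g_\Delta(N)$ (the genus being read from Table \ref{highgenus}), producing the desired contradiction.

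The compositum condition $F = F_1 F_2$ is automatic whenever $\gcd(n_1, 2) = 1$, since then $[F : F_1 F_2]$ must divide both $n_1 = [F:F_1]$ and $2 = [F:F_2]$, hence equals $1$. For natural maps of odd degree, such as the degree $3$ cover $X_{\Delta_1}(34) \to X_{\Delta_1}(17)$ exhibited in the preceding example, nothing further needs to be verified. For $\phi$ of even degree the compositum could in principle fail, meaning that $\psi$ descends through a proper intermediate modular cover $X \to X' \to Y$; in such cases I would rule this out by inspecting the finite lattice of congruence subgroups sandwiched between $\Gamma_\Delta(N)$ and $\pm\Gamma_{\Delta'}(N')$, or simply by replacing $\phi$ with a different natural map of odd degree whenever one is available.

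The main obstacle is therefore not conceptual but practical: for each of the ten curves one must identify a natural map $\phi$ whose Castelnuovo bound lies strictly below $g_\Delta(N)$, and, when $n_1$ is even, verify that the subgroup lattice between $\Gamma_\Delta(N)$ and the group defining $Y$ leaves no room for the bielliptic involution to descend to a proper intermediate cover. Once a suitable $\phi$ is produced, each individual verification reduces to a one-line numerical comparison against Table \ref{highgenus}.
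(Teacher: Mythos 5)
Your proposal is correct and is exactly the paper's argument: the authors prove this lemma by tabulating, for each of the ten curves, a natural modular covering $\phi:X\to Y$ (e.g.\ $X_{\Delta_1}(34)\to X_{\Delta_1}(17)$ of degree $3$, $X_{\Delta_1}(49)\to X_0(49)$ of degree $7$, $X_{\Delta_5}(72)\to X_0(24)$ of degree $9$, $X_{\Delta_5}(56)\to X_{\Delta_1}(28)$ of degree $2$, \dots) and invoking Castelnuovo's inequality against the genera in Table \ref{highgenus}. The only even-degree entry in their table is the degree-$2$ map $X_{\Delta_5}(56)\to X_{\Delta_1}(28)$, and there the compositum condition you flag is settled exactly along the lines you indicate: failure of $F=F_1F_2$ would force $F_1\subseteq F_2$, i.e.\ $X_{\Delta_1}(28)$ would be dominated by the elliptic curve $E$, which is impossible since it has genus $4$.
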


For each $X=X_\Delta(N)$ in the list of Lemma \ref{lem:castel-method}, 
we suggest a finite morphism $\phi:X\to Y$ and its degree in Table 
\ref{tb:castel} which enables us to conclude that $X$ is not bielliptic 
by applying Castelnuovo's inequality.

\begin{center}

\begin{longtable}{c|c}
\caption{List of maps $\phi:X\to Y$ and their degrees}\label{tb:castel}\\
$\phi:X\to Y$ & degree
 \\ \hline
 $X_{\Delta_1}(34)\to X_{\Delta_1}(17)$ & 3
 \\ \hline
 $X_{\Delta_2}(45)\to X_1(15)$ & 3
 \\ \hline
 $X_{\Delta_1}(49)\to X_0(49)$ & 7
 \\ \hline
 $X_{\Delta_1}(50)\to X_{\Delta_1}(25)$ & 3
 \\ \hline
 $X_{\Delta_1}(54)\to X_{\Delta_1}(27)$ & 3
 \\ \hline
 $X_{\Delta_5}(56)\to X_{\Delta_1}(28)$ & 2
 \\ \hline
 $X_{\Delta_{10}}(63)\to X_{\Delta_2}(21)$ & 3
 \\ \hline
 $X_{\Delta_5}(72)\to X_0(24)$ & 9
 \\ \hline
 $X_{\Delta_8}(72)\to X_{\Delta_3}(24)$ & 3
 \\ \hline
 $X_{\Delta_2}(81)\to X_{\Delta_1}(27)$ & 3
 \\ \hline
\end{longtable}
\end{center}

\subsection{Elliptic elements}
We have a second, more careful look at Lemma \ref{lem:fp-method}.

\begin{Lem}\label{lem:elliptic} 
The following $X_\Delta(N)$ are not bielliptic:
\begin{quote}
$X_{\Delta_8}(56)$, $X_{\Delta_9}(63)$, $X_{\Delta_2}(69)$.
\end{quote}
\end{Lem}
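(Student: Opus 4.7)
The plan is to apply Lemma \ref{lem:fp-method} to each of the three curves $X = X_\Delta(N)$, but this time to derive a contradiction from conditions (b) or (c) rather than from the numerical Hurwitz condition (e) used in Lemma \ref{lem:fp-app}. First I would choose, for each curve, a Galois intermediate cover $\phi:X\to Y$ with $g(Y)\ge 2$ and $Y$ not subhyperelliptic, such that $g(X)-1 = \deg(\phi)(g(Y)-1)$; in other words, the covering is compatible with total unramification, so the dimension count alone cannot rule out biellipticity. Suppose for contradiction that $X$ has a bielliptic involution $v$. Then by Lemma \ref{lem:fp-method}, $v$ induces a bielliptic or hyperelliptic involution $\widetilde{v}$ on $Y$, the cover $\phi$ is totally unramified, and every non-trivial element of $\langle G,v\rangle$ other than $v$ is fixed-point free.

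Next I would use the tables in \cite{B1,O2} together with Lemma \ref{thm:whenaut} to restrict the candidates for $\widetilde{v}$ to a short list of Atkin--Lehner-type involutions on $Y$. For each candidate, I would compute the non-cuspidal fixed points via the elliptic element algorithm of Section \ref{sec:Atkin}, and then for each lift $[a]\widehat{W}_d$ of $\widetilde{v}$ to $X$ follow the recipe of Example \ref{calcfixedpoints} to count its fixed points on $X$. The contradiction is expected to arise in one of two ways: either the covering $\phi$ turns out to be ramified at an elliptic fixed point of $\widetilde{v}$ (violating condition (d)), or some non-trivial diamond operator $[a]\in G$ is forced to fix a point on $X$ lying above such a fixed point, violating the fixed-point-freeness of $\langle G,v\rangle\setminus\{1,v\}$ demanded by (c). In either case, the elliptic elements of $\Gamma_0(N)$ corresponding to the fixed points of $\widetilde{v}$ are the essential tool, which is why this lemma sits in the present subsection.

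The main obstacle is the case-by-case verification of the fixed-point counts: for composite $N = 56, 63, 69$ the elliptic classes on $X_0(N)$ split according to the local structure at each prime, and we must carefully lift each elliptic element of $\Gamma_0(N)$ to check whether it normalizes $\Gamma_\Delta(N)$ and whether the corresponding automorphism of $X_\Delta(N)$ actually fixes a point. This analysis is tractable precisely because Section \ref{sec:Atkin} provides an explicit algorithm for enumerating inequivalent fixed-point representatives via the quadratic-form parametrization of Proposition \ref{GKZ}, and in each of the three listed cases I expect to identify a concrete diamond operator in $G$ (or a composition $[a]\widehat{W}_d$ arising from a forced lift) that is compelled to fix at least one elliptic point, yielding the desired contradiction.
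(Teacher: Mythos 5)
Your core mechanism is the paper's: for these three curves condition (e) of Lemma \ref{lem:fp-method} is satisfied, so the contradiction must come from condition (c), and the tool is elliptic elements lifting the bielliptic involutions of $Y=X_0(N)$. But the paper's execution is much lighter than your plan: it never runs the full fixed-point enumeration of Section \ref{sec:Atkin}. It simply exhibits, for each candidate $\widetilde v$, one elliptic element $w$ normalizing $\Gamma_\Delta(N)$ that induces $\widetilde v$, together with a second elliptic element $[a]w$ where $[a]\notin\Delta$. Both automatically have fixed points on $X_\Delta(N)$, both lie in $\langle G,v\rangle$ (since $v=[b]w$ for some $[b]\in G$), and at least one of them differs from $v$ --- that alone violates (c). No counting of fixed points, and no appeal to Proposition \ref{GKZ}, is needed.

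There are two concrete gaps in your plan as written. First, the Fricke involution $W_N$ is a bielliptic involution of $X_0(N)$ in all three cases and so is on your candidate list, but your proposed contradictions need not materialize for it: the lifts $[a]\widehat{W}_N$ are all involutions by Lemma \ref{Frickeinvolutions}, yet they need not all be representable by elliptic elements, so you cannot count on producing a second lift with fixed points. The paper eliminates $W_N$ by a different argument: since $g_\Delta(N)\geq 9$ here, Proposition \ref{prop:unique} forces $v$ to be defined over $\Q$, while by Lemma \ref{Frickeinvolutions} no $[a]\widehat{W}_N$ is. Second, for $N=56$ and $N=63$ the bielliptic involutions of $X_0(N)$ listed in \cite[Theorem 3.15]{B1} include $W_7S_2W_8S_2$ resp.\ $W_7S_3^2W_9S_3$ (with $S_2=\sm 1&1/2\\0&1\esm$, $S_3=\sm 1&1/3\\0&1\esm$); these lie in the normalizer of $\Gamma_0(N)$ but are not Atkin--Lehner involutions $W_d$, so restricting the candidates to ``Atkin--Lehner-type'' involutions and their lifts $[a]\widehat{W}_d$ misses them. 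One must separately verify that they normalize $\Gamma_\Delta(N)$ and then write down elliptic representatives $w$ and $[a]w$ for them by hand, which is exactly what the paper does.
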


\begin{proof} 	
We first treat the case $X_{\Delta_8}(56)$ in detail.
Suppose $v$ is a bielliptic involution on $X_{\Delta_8}(56)$ and
$\widetilde{v}$ is the induced involution on $X_0(56)$.
Note that $X_0(56)$ is not hyperelliptic but bielliptic, and all the
bielliptic involutions of $X_0(56)$ are $W_{56}$, $W_7$ and
$W_7S_2W_8S_2$ where $S_2=\begin{pmatrix}1&1/2\\0&1\end{pmatrix}$
(see \cite[Theorem 3.15]{B1}).
By the same argument as in the proof of Lemma \ref{lem:cusp1}, 
$\widetilde{v}$ cannot be $W_{56}$. 
\par 
With $\phi:X_{\Delta_8}(56)\to X_0(56)$ we are in the situation of Lemma \ref{lem:fp-method},
but condition (e) is satisfied. 	
\par 
For each of the other two bielliptic involutions $\widetilde{v}$ we give an elliptic element	
$w$ that normalizes $\Gamma_{\Delta_8}(56)$ and gives $\widetilde{v}$ on $X_0(56)$, and we give
a matrix $[a]\in (\Z/56\Z)^*\setminus\Delta_8$ such that $[a]w$ is also an elliptic element.
The logic behind this construction is the following:
If $v$ induces $\widetilde{v}$ on $X_0(56)$, then $v$ and $w$ are automorphisms of 
$X_{\Delta_8}(56)$ that agree on $X_0(56)$; so they differ by an element $[b]$ from
$G=Gal(X_{\Delta_8}(56)/X_0(56))$, i.e. $v=[b]w$. Now $w$ and $[a]w$ both are in 
$\langle G,v\rangle$, and at least one of them is different from $v$. But, being represented
by elliptic elements, both of them have fixed points. So condition (c) of Lemma \ref{lem:fp-method}
cannot hold.
\par 
One easily checks that $S_2$ normalizes $\Gamma_{\Delta_8}(56)$, 
as do $W_7$ and $W_8$. So $W_7S_2W_8S_2$ also normalizes
$\Gamma_{\Delta_8}(56)$.  
\par 
Take
$W_{7}=\begin{pmatrix} 7&-1\\56&-7\end{pmatrix}$ and
$[5]=\begin{pmatrix} 5&-1\\56&-11\end{pmatrix}$. Then
$[5]W_7=\begin{pmatrix} -21&2\\224&21\end{pmatrix}$. By condition (c) of Lemma
\ref{lem:fp-method}, $\tilde v$ cannot be $W_7$.  
Finally take
$W_{7}=\begin{pmatrix} 7&-6\\-56&49\end{pmatrix}$,
$W_{8}=\begin{pmatrix} -64&-3\\-168&-8\end{pmatrix}$ and
$[5]=\begin{pmatrix} 61&6\\-112&-11\end{pmatrix}$. Then
$W_7S_2W_8S_2=\begin{pmatrix} -28&-15\\56&28\end{pmatrix}$ and
$[5]W_7S_2W_8S_2=\begin{pmatrix}
-1372&-747\\2520&1372\end{pmatrix}$.  
Thus $\tilde v$ also cannot be
$W_7S_2W_8S_2$, and hence $X_{\Delta_8}(56)$ is not bielliptic.
\par
The other curves are dealt with in the same manner. Note that $X_0(63)$ has exceptional 
automorphisms. But by \cite[Theorem 3.15]{B1} all its bielliptic involutions come from
$PSL_2(\R)$, namely $W_{63}$, $W_7 S_3^2 W_9 S_3$ and $W_7 S_3^2 W_9 S_3$ where
$S_3=\begin{pmatrix}1&1/3\\0&1\end{pmatrix}$.
\par 
As usual $W_{63}$ is not possible.
If we take $[2]=\begin{pmatrix} 65 & -8\\ 252 & -31 \end{pmatrix}$, $W_7=\begin{pmatrix}-7 & 4 \\ -63 & 35\end{pmatrix}$, $W_9=\begin{pmatrix} -9 & 5 \\ 63 & -36 \end{pmatrix}$,
then $W_7 S_3^2 W_9 S_3= \begin{pmatrix} 21 & -4 \\ 126 & -21 \end{pmatrix}$ and $[2] W_7 S_3^2 W_9 S_3=\begin{pmatrix} 357 &-92 \\ 1386 & -357 \end{pmatrix}$.
Also if we take $[-31]=\begin{pmatrix} -31& -8 \\ 252 & 65 \end{pmatrix}$, $W_7=\begin{pmatrix} -14 & 3 \\ 63 &-14 \end{pmatrix}$, $W_9=\begin{pmatrix} 9& -5 \\ -63& 36 \end{pmatrix}$,
then $W_7 S_3^2 W_9 S_3=\begin{pmatrix} -21& -4 \\ 126 & 21 \end{pmatrix}$ and $[-31] W_7 S_3 W_9 S_3^2=\begin{pmatrix} -357& -44 \\ 2898& 357\end{pmatrix}$.
\par 
Finally for $X_{\Delta_2}(69)$ we take 
$[22]=\begin{pmatrix} -47 & 15 \\ -69 & 22\end{pmatrix}$ and $W_{23}=\begin{pmatrix} 23& -8 \\ 69& -23 \end{pmatrix}$;
then $[22] W_{23}=\begin{pmatrix} -46& 31 \\ -69 & 46\end{pmatrix}$.
%
\end{proof}

%
%
%
%

\subsection{Bring's curve}

Next we will treat the modular curve $X_{\Delta_1}(25)$. Kubert 
denoted this curve by $B$ in \cite{Ku}. We first thought that 
Kubert used such a notation because it would be isomorphic to
Bring's curve. Bring's curve is the unique curve of genus $4$ 
with full automorphism group $S_5$. Also it is isomorphic to 
the modular curve $X_1(5,10)$ and it is a bielliptic curve 
(cf. \cite{C-D,Hu}). But it turns out that $X_{\Delta_1}(25)$ 
is not Bring's curve and not even a bielliptic curve.

\begin{Thm}\label{bring} 
Bring's curve is the unique bielliptic curve of genus $4$ over $\C$
that has an automorphism of order $5$.
\end{Thm}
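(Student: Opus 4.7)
The plan is to combine the fixed-point geometry of the order-$5$ automorphism $\sigma$ with that of a bielliptic involution $v$ to force a large automorphism subgroup on $X$, from which the curve is identified as Bring's curve.

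First, applying the Hurwitz formula to $X\to X/\langle\sigma\rangle$, with every non-trivial ramification index forced to be $5$ (since $\sigma$ has prime order), gives $6=5(2g'-2)+4r$, whose only nonnegative solution is $g'=0$, $r=4$. So $\sigma$ has exactly $4$ fixed points on $X$. Applying Hurwitz to $X\to X/\langle v\rangle$ (an elliptic curve) shows that $v$ has exactly $6$ fixed points. I next argue that $\sigma$ and $v$ cannot commute: otherwise $\sigma$ would permute the $6$ fixed points of $v$ in orbits of size $1$ or $5$, while $v$ would permute the $4$ fixed points of $\sigma$ in orbits of size $1$ or $2$. Writing $s$ for the number of common fixed points, compatibility would require $s\equiv 1\pmod 5$ and $s\equiv 0\pmod 2$, which has no solution in $\{0,\ldots,4\}$. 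Consequently the conjugates $v_i:=\sigma^i v\sigma^{-i}$ for $i=0,\ldots,4$ are pairwise distinct bielliptic involutions.

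Set $G:=\langle\sigma,v\rangle\subseteq Aut(X)$. I then rule out $G=D_5$: in that case $X/G\cong\mathbb{P}^1$ and Hurwitz gives total ramification $R=26$, while a stabilizer count (noting that any point fixed by both $\sigma$ and some $v_i$ is automatically fixed by all of $G$, since $v_iv_j\in\langle\sigma\rangle$) yields
$$R=(5-1)(4-k)+(2-1)\cdot 5(6-k)+(10-1)k=46,$$
where $k$ is the number of points fixed by all of $G$; contradiction. Hence $v$ does not normalize $\langle\sigma\rangle$, so $G$ has at least two (and therefore, by Sylow, at least six) Sylow $5$-subgroups, forcing $|G|\geq 60$. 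Combined with the Hurwitz bound $|G|\leq 84(g-1)=252$ and the fact that $G$ is generated by an involution and an order-$5$ element, the remaining possibilities narrow to $A_5$ or $S_5$.

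The final step, and the main obstacle, is identifying $X$ with Bring's curve. Once $A_5\hookrightarrow Aut(X)$ is in hand, the Hurwitz data for the degree-$60$ cover $X\to X/A_5\cong\mathbb{P}^1$ together with the stabilizer structure of the $A_5$-action is rigid: the branch data is completely determined, and the resulting $(A_5,X)$-pair is classically known to be uniquely realized by Bring's curve, whose full automorphism group is $S_5$. This identification is the subtle part, as it requires either invoking the classification of large automorphism groups in genus $4$ or carrying out a direct uniformization analysis of the associated triangle-group quotient.
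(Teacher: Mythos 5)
Your opening moves are correct and are in fact a self-contained, more hands-on version of the paper's one observation: the paper simply notes that $\sigma$ acts by conjugation on the set of bielliptic involutions without fixed points (if $\sigma$ commuted with a bielliptic $v$ it would induce an order-$5$ automorphism with a fixed point on the genus-$1$ quotient), so the number of bielliptic involutions is divisible by $5$; it then finishes in one line by citing Casnati--Del Centina \cite[Corollary 6.9]{C-D}, which says this number cannot be $5$ and that $10$ characterizes Bring's curve. Your fixed-point counts, the parity/mod-$5$ argument that $\sigma$ and $v$ do not commute, the resulting orbit of $5$ distinct conjugate bielliptic involutions, and the Riemann--Hurwitz count $46\neq 26$ excluding $G\cong D_5$ are all correct and verifiable.

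The second half, however, has genuine gaps precisely where the uniqueness content lives. First, ``forcing $|G|\ge 60$'' does not follow as stated: $n_5\ge 6$ only gives $|G|\ge 30$ (and even $n_5\ge 2$ requires knowing the Sylow $5$-subgroup has order exactly $5$, which needs a short Hurwitz computation ruling out subgroups of order $25$ in genus $4$); you must also invoke the standard fact that a group of order $30$ has a normal Sylow $5$-subgroup. Second, and more seriously, ``the remaining possibilities narrow to $A_5$ or $S_5$'' is not a consequence of the constraints you list: for instance $A_5\times C_2$ has order $120\le 252$, has six Sylow $5$-subgroups, and is generated by an involution and an element of order $5$; excluding such groups genuinely requires the classification of automorphism groups of genus-$4$ curves, which you gesture at but neither cite nor apply. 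Third, the final identification of a genus-$4$ curve with an $A_5$-action as Bring's curve --- the actual content of the theorem --- is only asserted as ``classically known.'' To close it you would need to compute (via Hurwitz, as is indeed forced) that the quotient $X\to X/A_5$ has signature $(0;2,5,5)$ and then prove uniqueness of the corresponding generating vector up to equivalence, or give a precise reference. In effect you defer exactly the same uniqueness input that the paper obtains from \cite[Corollary 6.9]{C-D}, but after a much longer build-up and without a citation that actually covers it; as written, the proof is an outline with the decisive steps missing.
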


\begin{proof} 
The automorphism $\sigma$ of order $5$ acts by conjugation on the 
bielliptic involutions. If it fixes one of them, this induces an 
automorphism of order $5$ with fixed points on a curve of genus 
$1$, which is impossible. So the number of bielliptic involutions 
must be divisible by $5$. By \cite[Corollary 6.9]{C-D} this number 
cannot be $5$, and $10$ means Bring's curve.
\end{proof}

\begin{Lem}\label{lem:delta25}
$X_{\Delta_1}(25)$ is not a bielliptic curve.
\end{Lem}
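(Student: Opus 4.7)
The plan is to invoke Theorem \ref{bring}. First, from the table in the Appendix (or by direct application of the genus formula in \cite{J-K3}), the curve $X_{\Delta_1}(25)$, where $\Delta_1=\{\pm 1,\pm 7\}$, has genus $4$. Moreover $(\Z/25\Z)^*/\Delta_1$ is cyclic of order $5$, so the Galois cover $X_{\Delta_1}(25)\to X_0(25)$ has group $\Z/5\Z$; picking $a=2$, the diamond automorphism $\sigma=[2]$ therefore has order $5$ on $X_{\Delta_1}(25)$. Consequently, if $X_{\Delta_1}(25)$ were bielliptic, Theorem \ref{bring} would force $X_{\Delta_1}(25)\cong$ Bring's curve.

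The task is then to show that $X_{\Delta_1}(25)$ is \emph{not} isomorphic to Bring's curve. Since the full automorphism group of Bring's curve is $S_5$ of order $120$, it suffices to prove $|\operatorname{Aut}(X_{\Delta_1}(25))|<120$. By Lemma \ref{thm:whenaut} and Lemma \ref{Frickeinvolutions} (together with the fact that for the prime-power level $N=25$ the only nontrivial Atkin--Lehner involution is the Fricke $\widehat{W}_{25}$, since $\gcd(d,N/d)=1$ forces $d\in\{1,25\}$), the modular (non-exceptional) part of $\operatorname{Aut}(X_{\Delta_1}(25))$ is exactly the dihedral group $D_5=\langle\sigma,\widehat{W}_{25}\rangle$ of order $10$. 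Under the putative identification with Bring's curve, this $D_5$ sits inside $S_5$ in the standard way, and its five reflections correspond to double transpositions, not transpositions. Hence none of the five modular involutions $[a^i]\widehat{W}_{25}$ could be one of the $10$ bielliptic involutions of Bring's curve (which are precisely the $10$ transpositions of $S_5$): all $10$ bielliptic involutions, and in fact $110$ further automorphisms, would have to be exceptional.

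To rule this out, I would appeal to Kubert's \cite{Ku} explicit equations for $X_{\Delta_1}(25)$ (which he denotes by $B$) and verify directly, by inspection of the automorphisms of this concrete plane or canonical model in $\mathbb{P}^3$, that $\operatorname{Aut}(X_{\Delta_1}(25))=D_5$, contradicting the order $120$ required if $X_{\Delta_1}(25)$ were Bring's curve. The main obstacle is carrying out this explicit verification: an abstract genus-$4$ curve can in principle admit multiple modular structures, so one cannot rely purely on the modular packaging of $X_{\Delta_1}(25)$, and must instead use Kubert's concrete model to pin down the full automorphism group. Once this verification is complete, the combination of Theorem \ref{bring} with $|\operatorname{Aut}(X_{\Delta_1}(25))|=10<120$ immediately yields the result.
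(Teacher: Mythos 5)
Your first step coincides with the paper's: an order-$5$ automorphism ($[2]$, or $[6]$ in the paper) plus Theorem \ref{bring} reduces the problem to showing that $X_{\Delta_1}(25)$ is not isomorphic to Bring's curve. But your second step has a genuine gap. You propose to distinguish the two curves by verifying that $\operatorname{Aut}(X_{\Delta_1}(25))=D_5$, and you defer that verification to an ``inspection of the automorphisms'' of an explicit model. That deferred computation \emph{is} the entire difficulty: Lemmas \ref{thm:whenaut} and \ref{Frickeinvolutions} only produce the non-exceptional automorphisms, and this paper itself shows (Lemma \ref{lem:37biell}, Theorem \ref{thm:Aut37}) that intermediate curves $X_\Delta(N)$ can have exceptional automorphisms, so you cannot treat the absence of $110$ exceptional automorphisms as a routine check to be filled in later. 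Moreover, the reference \cite{Ku} is a paper on torsion bounds; the text only says Kubert \emph{denoted} this curve by $B$ there, so the ``explicit equations'' you plan to inspect are not actually supplied by that source. Compare Lemma \ref{lem:delta37}, where establishing an analogous statement about a $2$-Sylow subgroup required computing a canonical model from $q$-expansions and counting automorphisms over $\mathbb{F}_{5^k}$ in MAGMA. As written, your argument proves nothing beyond the reduction to ``not Bring's curve.''

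The paper closes this gap by a purely arithmetic argument that avoids computing any automorphism group: Bring's curve is $X_1(5,10)$, which (conjugating by $\left(\begin{smallmatrix}5&0\\0&1\end{smallmatrix}\right)$) is isomorphic to $X_{\Delta_2}(50)$ and hence covers $X_0(50)$. By Cremona's tables the Jacobian of $X_0(50)$ has elliptic isogeny factors of conductor $50$ with multiplicative reduction at $2$, which stays multiplicative over any number field; so Bring's curve has bad reduction at every prime above $2$. But $X_{\Delta_1}(25)$ is covered by $X_1(25)$ and therefore has good reduction outside the primes above $5$. Hence the two curves cannot be isomorphic over any number field. If you want to salvage your approach, you would need to actually carry out and document the model computation (as the paper does for $N=37$); otherwise the reduction argument is both shorter and complete.
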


\begin{proof} 
Obviously $[6]$ is an automorphism of $X_{\Delta_1}(25)$ of order $5$. So 
if the curve were bielliptic, it would be isomorphic to Bring's curve 
$X_1(5,10)$.
Both curves are defined over some number field, so an isomorphism 
would be defined over the algebraic closure of $\mathbb Q$ and 
hence over a suitable number field.
By conjugating with the diagonal matrix 
$\begin{pmatrix}5&0\\0&1\end{pmatrix}$,
one gets an isomorphism between $X_1(5,10)$ and $X_{\Delta_2}(50)$.
The latter curve covers $X_0(50)$.
By Cremona's table \cite{Cr} there are elliptic curves over $\mathbb Q$ with 
conductor $50$. These have multiplicative reduction at $2$ and hence also 
multiplicative reduction over any number field at any prime above $2$.
They are isogeny factors of the Jacobian of the modular curve isomorphic 
to $X_1(5,10)$, which therefore must have bad reduction above $2$. On the 
other hand, $X_{\Delta_1}(25)$ is covered by $X_1(25)$ and therefore has 
good reduction outside the primes above $5$.
\end{proof}

\subsection{The case $X_{\Delta_4}(37)$}


\begin{Lem}\label{lem:delta37} 
The group $\langle [2], \widehat{W}_{37}\rangle\cong C_2 \times C_2$ 
is a Sylow $2$-subgroup of $Aut(X_{\Delta_4}(37))$. In particular, 
$X_{\Delta_4}(37)$ is not bielliptic.
\end{Lem}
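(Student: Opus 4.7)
The plan is to pin down the order of the Sylow $2$-subgroup by brute force using Hurwitz's bound, and then to rule out biellipticity by analyzing each of the three involutions in our known Klein four group. Recall from the proof of Theorem \ref{thm:subhyper} (the remark about $10$ fixed points of a would-be hyperelliptic involution) that $g(X_{\Delta_4}(37)) = 4$. Hence Hurwitz's automorphism bound gives
$$|Aut(X_{\Delta_4}(37))| \leq 84(g-1) = 252 = 2^2 \cdot 3^2 \cdot 7.$$
The $2$-part on the right is only $4$, so every Sylow $2$-subgroup of $Aut(X_{\Delta_4}(37))$ has order at most $4$. Since $\langle [2], \widehat{W}_{37}\rangle$ is an honest subgroup of order $4$ (note that $\widehat{W}_{37}$ is an involution by Lemma \ref{Frickeinvolutions}, and it cannot coincide with $[2]$ because it sits above the non-trivial Atkin--Lehner involution on $X_0(37)$), this subgroup is itself a Sylow $2$-subgroup.

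For the biellipticity assertion, suppose for contradiction that $X_{\Delta_4}(37)$ is bielliptic with bielliptic involution $v$. Then $v$ has order $2$ and thus lies in some Sylow $2$-subgroup $S$ of $Aut(X_{\Delta_4}(37))$. By Sylow's theorem $S$ is conjugate to $\langle [2], \widehat{W}_{37}\rangle$, so $v$ itself is conjugate to one of $[2]$, $\widehat{W}_{37}$ or $[2]\widehat{W}_{37}$. Conjugates of bielliptic involutions are bielliptic, so at least one element of the Klein four group $\langle [2], \widehat{W}_{37}\rangle$ must be a bielliptic involution.

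It therefore suffices to check that each of the three quotients $X_{\Delta_4}(37)/\langle\sigma\rangle$ with $\sigma \in \{[2],\widehat{W}_{37},[2]\widehat{W}_{37}\}$ has genus $\neq 1$. We already know $X_{\Delta_4}(37)/\langle [2]\rangle = X_0(37)$ has genus $2$, and the remark after Lemma \ref{Frickeinvolutions} records that $X_{\Delta_4}(37)/\langle\widehat{W}_{37}\rangle$ also has genus $2$. For the third quotient I will invoke the well-known relation for a Klein four group $H=\{1,\sigma_1,\sigma_2,\sigma_3\}$ acting on a curve $X$,
$$g(X) + 2g(X/H) = g(X/\langle\sigma_1\rangle) + g(X/\langle\sigma_2\rangle) + g(X/\langle\sigma_3\rangle),$$
which is a standard consequence of Riemann--Hurwitz (or Kani--Rosen). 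Plugging in $g(X)=4$ and $g(X/H)=g(X_0^+(37))=1$ together with the two known values $2,2$ forces $g(X_{\Delta_4}(37)/\langle[2]\widehat{W}_{37}\rangle) = 2$. Hence all three quotients have genus $2$, none is elliptic, and none of the three involutions is bielliptic, a contradiction.

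There is no real obstacle here beyond bookkeeping: the whole argument rests on two ingredients already established in the paper (namely $g(X_{\Delta_4}(37))=4$ and $g(X_{\Delta_4}(37)/\widehat{W}_{37})=2$) and on the numerical coincidence that $84 \cdot 3 = 252$ has $2$-part exactly $4$, so that our explicit order-$4$ subgroup saturates the Hurwitz bound at the prime $2$.
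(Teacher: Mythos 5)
Your argument has a fatal gap at its very first step. The Hurwitz bound $|Aut(X)|\leq 84(g-1)$ is an \emph{inequality}, not a divisibility statement: from $|Aut(X_{\Delta_4}(37))|\leq 252=2^2\cdot 3^2\cdot 7$ you cannot conclude that the $2$-part of $|Aut(X_{\Delta_4}(37))|$ divides $2^2$. Any group order such as $8$, $16$, $24$, $48$ or $120$ is also $\leq 252$ and has $2$-part strictly larger than $4$. This is not a hypothetical worry: Bring's curve has genus $4$ and automorphism group $S_5$ of order $120$, whose Sylow $2$-subgroups have order $8$, so genus-$4$ curves with $2$-part exceeding $4$ genuinely exist. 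Establishing that the $2$-part of $Aut(X_{\Delta_4}(37))$ is exactly $4$ is in fact the entire difficulty of the lemma; the paper proves it by first showing (via the classification of groups of order $8$ acting on genus-$4$ surfaces and a field-of-definition argument through Lemma \ref{Frickeinvolutions}) that if $8$ divided $|Aut|$ then there would be a pair of bielliptic involutions defined over a controlled extension, then computing an explicit canonical model of the curve from $q$-expansions, reducing it modulo $5$, and verifying with MAGMA that $|Aut_{\F_{5^k}}(\overline{C})|\in\{2,4\}$ for all $k\leq 12$. None of that work is replaceable by the numerology of $84\cdot 3$.

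The second half of your proposal is sound \emph{conditional} on the Sylow claim: conjugating a putative bielliptic involution into $\langle[2],\widehat{W}_{37}\rangle$ and then checking via the Kani--Rosen (Accola) relation $g(X)+2g(X/H)=\sum_i g(X/\langle\sigma_i\rangle)$ that all three involution quotients have genus $2$ is correct, and agrees with the paper's observation that each of the three involutions has exactly two fixed points. But since the Sylow statement is the actual content of the lemma and your justification of it is invalid, the proof as a whole does not stand.
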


\begin{proof}
To ease notation we write $X$ for $X_{\Delta_4}(37)$ and $A$ for its full 
automorphism group. Also, $\langle [2], \widehat{W}_{37}\rangle$ will 
be denoted by $M$. 
\par 
We first show that $X_{\Delta_4}(37)$ is bielliptic if and only if 
$|A|$ is divisible by $8$.
\par
Each of the three involutions in $M$ has exactly $2$ fixed points. 
So if $X$ has a bielliptic involution $v$, the $2$-Sylow subgroup 
containg $M$ must also contain a conjugate of $v$, and hence its 
order must be divisible by $8$.
\par
Conversely, if $8$ divides $|A|$, then, by the theory of $p$-groups, 
there exists a group $D\subseteq A$ of order $8$ that contains $M$. 
Since $D$ has at least $3$ involutions, it cannot be cyclic or 
a quaternion group. On the other hand, the groups $C_4 \times C_2$ 
and $C_2 \times C_2 \times C_2$ cannot act as automorphisms on a genus 
$4$ Riemann surface \cite[Proposition 2]{Ki}. So $D$ must be a dihedral 
group $D_4$. Applying the Hurwitz formula to the covering $X\to X/D$ 
shows that the two involutions $v_1$, $v_2$ outside $M$ are bielliptic.
\par
For the same reason as in the proof of Lemma \ref{lem:delta25} the curve
$X$ is not Bring's curve. So by \cite[Corollary 6.9]{C-D} it has at most 
$6$ bielliptic involutions. Thus there exists a Galois extension $K$ of $\Q$ 
with $Gal(K/\Q)$ a (not necessarily transitive) subgroup of the symmetric
group $S_6$ such that all bielliptic involutions are defined over $K$.
On the other hand, $[2]$ is defined over $\Q$, whereas $\widehat{W}_{37}$ and 
$[2]\widehat{W}_{37}$ are defined over $\Q(\sqrt{37})$. 
\par
By using the computer algebra system SAGE, we can get the $q$-expansions 
of a basis $\{f_1,f_2,f_3,f_4\}$ of the cusps forms of weight $2$ for $\Gamma_{\Delta_4}(37)$.
Using these $q$-expansions and following the method described in 
Section 2 in \cite{Ha-Sh}, one can obtain a model of the curve $X_{\Delta_4}(37)$ over $\C$ as follows.
The curve $X_{\Delta_4}(37)$ can be identified with the canonical curve which is the image
of the canonical embedding
$$X_{\Delta_4}(37)\ni P\mapsto [f_1(P) :\cdots : f_4(P)] \in \mathbb P^3.$$
By Petri's Theorem, a minimal generating system of the ideal $I(X_{\Delta_4}(37))$ contains
a cubic polynomial and $X_{\Delta_4}(37)$ is not isomorphic to a smooth plane quintic curve.
(This follows from Petri's Theorem, see e.g. \cite[Theorem 2.1]{Ha-Sh}.)
To obtain a minimal generating system of  $I(X_{\Delta_4}(37))$; we only have to compute
the linear relations of the $f_if_j$ and $f_if_jf_k$ $(1\leq i,  j, k \leq 4)$ by using their $q$-expansions.
By this method, we obtain the following defining equations of $X_{\Delta_4}(37)$.
$$Q_1:\ \ -x_2 ^2 +x_3 x_1 -2x_4 x_3 ,$$

$$Q_2:\ \ 9x_2 ^2 x_1 -20x_2 ^3 -9x_3 x_1 ^2 +12x_3 x_2 x_1 
-8x_3 x_2 ^2 +16x_3 ^2 x_1 -8x_3 ^2 x_2 $$
$$ -12x_3 ^3 +8x_4 x_1 ^2 +18x_4 x_2 ^2 -20x_4 ^2 x_1 -24x_4 ^2 x_2 +24x_4 ^3 .$$


The curve $C$ defined by $Q_1$ and $Q_2$ is already defined over 
$\Q$ and smooth at $p=5$. Let $\overline{C}$ denote its reduction 
modulo $5$. 
\par
We emphasize that the question whether over $\Q$ the curve $C$ is 
already a model of $X_{\Delta_4}(37)$ or rather of one of its twists 
is not important for us. We only need the connection over $\C$, 
which implies $Aut_{\C}(X_{\Delta_4}(37))\cong Aut_{\C}(C)$. Moreover, 
$Aut_{\C}(C)$ embeds into $Aut_{\overline{\F_5}}(C)$. 
\par
So if $|A|$ is divisible by $8$, or equivalently $X$ is bielliptic, 
then by the preceding discussion $Aut_{\overline{\F_5}}(\overline{C})$ 
contains a pair of conjugate, non-commuting bielliptic involutions 
that generate a group $D_4$. 
By the preceding argumentation, these two bielliptic involutions are 
defined over the residue field of $K$, which is an extension of $\F_5$
whose Galois group is a cyclic subgroup of $S_6$. So they must both be 
defined over some field $\F_{5^k}$ with $k\leq 6$. 
\par
But using the computer algebra system MAGMA one can easily calculate 
for $k\leq 12$ that $|Aut_{\F_{5^k}}(\overline{C})|$ equals $2$ if $k$ 
is odd and $4$ if $k$ is even.

%

This finally proves the lemma.
\end{proof}

Having come so far, we cannot resist working out the full automorphism
group of $ X_{\Delta_4}(37)$.

\begin{Prop}\label{prop:37noex} 
The modular curve $X_{\Delta_4}(37)$ has no exceptional automorphisms.
In other words, $Aut(X_{\Delta_4}(37))=\langle [2], \widehat{W}_{37}\rangle$.
\end{Prop}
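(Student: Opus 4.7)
Write $A=Aut(X_{\Delta_4}(37))$ and $M=\langle [2],\widehat{W}_{37}\rangle$. By Lemma \ref{lem:delta37}, $M\cong C_2\times C_2$ is a Sylow $2$-subgroup of $A$, so $|A|=4m$ with $m$ odd, and the goal is to show $m=1$. First I would invoke classical bounds: the Hurwitz bound gives $|A|\le 84(g-1)=252$, and Wiman's theorem bounds the order of every individual element of $A$ by $4g+2=18$, so any odd prime divisor of $m$ belongs to $\{3,5,7,11,13,17\}$.

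Next I would reuse the smooth $\Q$-model $C\subset\mathbb{P}^3$ cut out by the quadric $Q_1$ and the cubic $Q_2$ constructed in the proof of Lemma \ref{lem:delta37}, together with its good reduction $\overline{C}$ at the prime $5$. This gives an embedding $A=Aut_{\C}(C)\hookrightarrow Aut_{\overline{\F_5}}(\overline{C})$, and the reduction of $M$ already produces a subgroup of order $4$ on the right. Thus it suffices to prove $|Aut_{\overline{\F_5}}(\overline{C})|=4$: equality of the reduction $\overline{M}$ with the full automorphism group over $\overline{\F_5}$ would then force $A=M$.

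The main step is therefore to upgrade the MAGMA computation of Lemma \ref{lem:delta37} --- which verified $|Aut_{\F_{5^k}}(\overline{C})|\le 4$ only for $k\le 12$ --- into a bound valid for every $k$. Any hypothetical $\sigma\in A$ of odd prime order $p\in\{3,5,7,11,13,17\}$ is defined over a number field $K$ with $[K:\Q]\le|A|\le 252$, and its reduction modulo a prime of $K$ above $5$ lives in $Aut_{\F_{5^n}}(\overline{C})$ for some $n\le[K:\Q]$. The $Gal(\F_{5^n}/\F_5)$-orbit of this reduction, together with $\overline{M}$, would generate a subgroup of $Aut_{\F_{5^n}}(\overline{C})$ of order divisible by $4p$, contradicting the target bound $\le 4$. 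The main obstacle is to keep $n$ small enough that this contradiction can be produced by a finite MAGMA check: combining the orbit-size restriction, Wiman's bound, and the short list of allowed primes $p$ should reduce the problem to verifying $|Aut_{\F_{5^k}}(\overline{C})|\le 4$ over a manageable extension of the already-verified range $k\le 12$, after which the conclusion $A=M$ follows.
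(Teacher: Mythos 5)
There is a genuine gap in your argument, and it sits exactly where you flag it: you never actually bound the degree $n$ of the residue field over which the reduction of a hypothetical odd-order automorphism is defined. Your bound $[K:\Q]\le |A|\le 252$ (coming from the size of the Galois orbit of $\sigma$ inside $A$) only gives $n\le 252$, which is far outside the range $k\le 12$ that the {\tt MAGMA} computation of Lemma \ref{lem:delta37} covers, and the computation cannot realistically be pushed to $k\le 252$. Worse, the contradiction you want to extract (``a subgroup of order divisible by $4p$ contradicts the target bound $\le 4$'') presupposes the very bound $|Aut_{\F_{5^n}}(\overline{C})|\le 4$ that is only verified for $n\le 12$, so the argument is circular unless $n$ is first forced into the verified range. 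Note that in Lemma \ref{lem:delta37} the small field of definition was obtained from a specific geometric fact: there are at most $6$ bielliptic involutions (by \cite[Corollary 6.9]{C-D}), so Galois permutes a $6$-element set and Frobenius has order at most $6$ on it. For a general automorphism of odd prime order there is no analogous small canonical Galois-stable set, so that trick does not transfer; this is the concrete missing idea. (A smaller remark: your list $p\in\{3,5,7,11,13,17\}$ from Wiman's bound is much larger than necessary; a Riemann--Hurwitz count for prime order automorphisms in genus $4$ already eliminates $p\ge 11$ and, with Accola's fixed-point result, $p=7$ as well, leaving only $p\in\{3,5\}$ --- but even so the field-of-definition problem remains.)

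For comparison, the paper avoids reduction modulo $5$ entirely in this proposition. Starting from the same fact that $M$ is a Sylow $2$-subgroup, it uses the classification of automorphism groups of genus $4$ curves to restrict $|A|$ to $\{4,12,20,36,60\}$, shows $M$ is self-normalizing (an element of order $3$ or $5$ normalizing $M$ would act with fixed points on the genus $1$ quotient $X/M=X_0^+(37)$, whose $j$-invariant is not $0$), applies Burnside's Transfer Theorem to produce a normal subgroup $N$ of index $4$, and then kills each possible $N$: the cyclic cases via the $\Q(\sqrt{37})/\Q$-Galois action forcing $[2]$ to commute with a $5$- or $3$-Sylow subgroup (yielding an impossible automorphism of $X_0(37)$), and the case $N\cong C_3\times C_3$ via fixed-point counts and the incompatibility of potentially good reduction of the Jacobian with the multiplicative reduction of the isogeny factor $X_0^+(37)$ at $37$. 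If you want to salvage a computational route, you would need either an a priori bound on the field of definition of all of $Aut(X_{\Delta_4}(37))$ (not just of the bielliptic involutions), or a method that computes the geometric automorphism group of $\overline{C}$ over $\overline{\F_5}$ directly rather than over finitely many finite fields.
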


\begin{proof}
We keep the notation from the proof of the previous lemma, where
we have shown that $M=\langle [2], \widehat{W}_{37}\rangle$ is 
a $2$-Sylow subgroup of $A=Aut(X_{\Delta_4}(37))$. As a consequence 
the only possibilities for $|A|$ are $4$, $12$, $20$, $36$, and 
$60$. (See for example \cite{Ki} or \cite{K-K} for the possible 
orders of automorphism groups for genus $4$.)
\par
Also note that by \cite[Corollary 1, p. 346]{Ac1} an unramified 
degree $3$ Galois cover of a genus $2$ curve is bielliptic. So 
every element of order $3$ in $A$ must have $3$ or $6$ fixed points.
\par
If the normalizer of $M$ in $A$ is bigger than $M$, then there is
an element of order $3$ or $5$ in $A$ inducing an automorphism with
fixed points on the genus $1$ curve $X/M=X_0^+(37)$. But this is not 
possible. (Note that the $j$-invariant of $X_0^+(37)$ is not $0$.)
\par
So we can assume from now on that $M$ is its own normalizer. Then by 
Burnside's Transfer Theorem (see for example \cite[Theorem 14.3.1]{Ha})
$A$ contains a normal subgroup $N$ of index $4$. If for example $|A|=60$,
then $N$, being of order $15$, is cyclic, and hence $A$ has a unique
$5$-Sylow group $F$. An involution can only act as identity or as 
inversion on $F$. Applying the Galois automorphism of 
$\Q(\sqrt{37})/\Q$ to $A$ maps $F$ into itself and 
$\widehat{W}_{37}$ to $[2]\widehat{W}_{37}$ (compare the paragraph 
after Lemma \ref{Frickeinvolutions}). This shows that 
$\widehat{W}_{37}$ and $[2]\widehat{W}_{37}$ must act on $F$ in 
the same manner. Consequently $[2]$ commutes with the elements in 
$F$, and we end up with an automorphism of order $5$ on the curve
$X_0(37)$.
The same arguments apply if $N$ has order $3$ or $5$ or is a cyclic
group of order $9$. 
\par
So we are left with the case $N\cong C_3 \times C_3$. Then by the 
Hurwitz formula each nontrivial element in $N$ has $3$ fixed points. 
The action of $N$ cuts up the Jacobian $J$ of $X$ into a product
(up to isogeny) of $4$ elliptic curves $E_i$, each $E_i$ being 
the quotient of $X$ by an element $t_i$ of order $3$, and 
$N/\langle t_i\rangle$ inducing an automorphism of order $3$ with 
fixed points on $E_i$. This is only possible if $j(E_i)=0$. So $J$
must have potentially good reduction everywhere. This contradicts
$J$ having the isogeny factor $X_0^+(37)$ with multiplicative 
reduction at places above $37$.
\end{proof}

\subsection{The remaining cases}

By using Proposition \ref{image} together with all criteria above, we get the following result.

\begin{Cor}\label{cor:all} The following $X_\Delta(N)$ are not bielliptic:
\begin{quote}
$X_{\Delta_1}(39)$, $X_{\Delta_1}(40)$, $X_{\Delta_1}(45)$, $X_{\Delta_1}(48)$,
$X_{\Delta_2}(48)$, $X_{\Delta_3}(48)$, $X_{\Delta_1}(56)$, $X_{\Delta_2}(56)$,
$X_{\Delta_3}(56)$, $X_{\Delta_4}(56)$, $X_{\Delta_1}(63)$, $X_{\Delta_2}(63)$, 
$X_{\Delta_3}(63)$, $X_{\Delta_4}(63)$, $X_{\Delta_5}(63)$, $X_{\Delta_1}(64)$,
$X_{\Delta_1}(72)$, $X_{\Delta_2}(72)$, $X_{\Delta_3}(72)$, $X_{\Delta_4}(72)$,
$X_{\Delta_1}(81)$.
\end{quote}
\end{Cor}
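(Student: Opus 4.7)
The plan is to apply Proposition \ref{image} uniformly to every curve in the list. For each $X_{\Delta_i}(N)$ appearing in the corollary, I will exhibit a finite covering $X_{\Delta_i}(N)\to Y$ where $Y$ is neither subhyperelliptic nor bielliptic; then if $X_{\Delta_i}(N)$ were bielliptic, Proposition \ref{image} would force $Y$ to be subhyperelliptic or bielliptic, a contradiction. The natural choice for $Y$ is some $X_{\Delta_j}(N)$ with $\Delta_i\subsetneq\Delta_j\subseteq(\Z/N\Z)^*$, the covering being induced by the inclusion $\Gamma_{\Delta_i}(N)\subsetneq\Gamma_{\Delta_j}(N)$.

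Theorem \ref{thm:subhyper} already trivializes half of the verification: since $X_{\Delta_1}(21)$ is the unique hyperelliptic intermediate curve, any other intermediate $X_{\Delta_j}(N)$ of genus $\ge 2$ is automatically not subhyperelliptic. Thus for each curve in the statement it suffices to produce one $\Delta_j\supsetneq\Delta_i$ such that (i) $g_{\Delta_j}(N)\ge 2$, readable from Table \ref{highgenus}, and (ii) $X_{\Delta_j}(N)$ has been shown non-bielliptic in one of Lemmas \ref{lem:cusp1}, \ref{lem:cusp2}, \ref{lem:39and40}, \ref{lem:fp-app}, \ref{lem:castel-method}, or \ref{lem:elliptic}, or in Lemma \ref{lem:delta37} for $N=37$.

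Representative pairings illustrating the pattern: $X_{\Delta_1}(39)\to X_{\Delta_2}(39)$ (Lemma \ref{lem:cusp2}); $X_{\Delta_1}(40)\to X_{\Delta_2}(40)$ (Lemma \ref{lem:fp-app}); $X_{\Delta_1}(45)\to X_{\Delta_2}(45)$ (Lemma \ref{lem:castel-method}); $X_{\Delta_i}(48)\to X_{\Delta_4}(48)$ for $i=1,2,3$ (Lemma \ref{lem:fp-app}); $X_{\Delta_i}(56)\to X_{\Delta_8}(56)$ for $i\le 4$ (Lemma \ref{lem:elliptic}); $X_{\Delta_i}(63)\to X_{\Delta_j}(63)$ for $i\le 5$ with $j\in\{6,7,8,9\}$ (Lemmas \ref{lem:fp-app}, \ref{lem:elliptic}); $X_{\Delta_1}(64)\to X_{\Delta_2}(64)$ (Lemma \ref{lem:fp-app}); $X_{\Delta_i}(72)\to X_{\Delta_j}(72)$ for $i\le 4$ with $j\in\{5,6,7,8\}$ (Lemmas \ref{lem:fp-app}, \ref{lem:castel-method}); $X_{\Delta_1}(81)\to X_{\Delta_2}(81)$ (Lemma \ref{lem:castel-method}).

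The only real obstacle is a bookkeeping one: for each $X_{\Delta_i}(N)$ in the list, one must choose the target carefully so as to land on a curve already known to be non-bielliptic. In particular one must avoid covering a bielliptic target (for instance $X_{\Delta_1}(64)$ also maps to the bielliptic curve $X_{\Delta_3}(64)$, which would be useless here). Given the finite tables in the Appendix, this is straightforward case-checking rather than new mathematics, so no single case is expected to present a genuine difficulty.
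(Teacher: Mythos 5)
Your strategy is exactly the paper's: for each curve in the list exhibit a covering $X_{\Delta_i}(N)\to X_{\Delta_j}(N)$ with $\Delta_i\subsetneq\Delta_j$ onto a target already shown to be neither subhyperelliptic nor bielliptic, and invoke Proposition \ref{image}. The problem is that the entire mathematical content of this corollary \emph{is} the bookkeeping, and several of the coverings you name do not exist because the required inclusion $\Delta_i\subseteq\Delta_j$ fails. Concretely: for $N=39$ you have $\Delta_1=\{\pm1,\pm14\}$ and $\Delta_2=\{\pm1,\pm16,\pm17\}$, and $14\notin\Delta_2$, so there is no map $X_{\Delta_1}(39)\to X_{\Delta_2}(39)$; the correct target is $\Delta_3=\{\pm1,\pm5,\pm8,\pm14\}$ (Lemma \ref{lem:39and40}). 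Likewise $9\notin\Delta_2=\{\pm1,\pm11\}$ for $N=40$ (use $\Delta_4$ or $\Delta_5$, Lemma \ref{lem:39and40}); $19\notin\Delta_2=\{\pm1,\pm14,\pm16\}$ for $N=45$ (use $\Delta_3$, Lemma \ref{lem:fp-app}); $7,17\notin\Delta_4=\{\pm1,\pm5,\pm19,\pm23\}$ for $N=48$, so $X_{\Delta_1}(48)$ and $X_{\Delta_2}(48)$ must instead map to $X_{\Delta_5}(48)=X_{\{\pm1,\pm7,\pm17,\pm23\}}(48)$; and $13,15\notin\Delta_8=\{\pm1,\pm3,\pm9,\pm19,\pm25,\pm27\}$ for $N=56$, so $X_{\Delta_1}(56)$ and $X_{\Delta_2}(56)$ must instead map to $X_{\Delta_5}(56)$ or to $X_{\Delta_6}(56)$, $X_{\Delta_7}(56)$ respectively.

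That said, the statement survives: for every curve in the list a valid target does exist among the groups in Table \ref{highgenus} (your pairings for $N=63$, $64$, $72$, $81$, and for $X_{\Delta_3}(48)$, $X_{\Delta_3}(56)$, $X_{\Delta_4}(56)$ are fine), and your reduction of ``not subhyperelliptic'' to Theorem \ref{thm:subhyper} plus $g\ge 2$ is correct. So the gap is not in the idea but in the verification: as written, the proof asserts the existence of coverings that do not exist, and a reader executing your instructions for $N=39,40,45$ and for the first two subgroups at $N=48,56$ would find no map to apply Proposition \ref{image} to. Each claimed inclusion $\Delta_i\subseteq\Delta_j$ has to be checked against the explicit generators in Table \ref{highgenus} before the argument closes.
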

\begin{proof}
We observe from Table \ref{highgenus} that
each curve $X_{\Delta_i}(N)$ listed in the statement is mapped to
$X_{\Delta_j}(N)$ which is neither subhyperelliptic nor  bielliptic for some $\Delta_j$ containing $\Delta_i$.
It then follows from Proposition \ref{image} that $X_{\Delta_i}(N)$ is not bielliptic, either. For example, 
$X_{\Delta_1}(39)$ is mapped to $X_{\Delta_3}(39)$, which is not bielliptic by Lemma \ref{lem:39and40}. 
Thus Proposition \ref{image} implies that $X_{\Delta_1}(39)$ is not bielliptic, either.
\end{proof}

As for the last case, $X_{\Delta_1}(36)$, the first two authors \cite[Lemma 2.6]{J-K2} already proved 
that $X_{\Delta_1}(36)$ is not bielliptic by using properties of the 
normalizer of $\Gamma_{\Delta_1}(36)$.

\begin{Lem}\cite[Lemma 2.6]{J-K2}\label{lem:delta36} 
	The modular curve $X_{\Delta_1}(36)$ is not bielliptic.
\end{Lem}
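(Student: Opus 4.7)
The plan is to proceed along the same lines as the previous lemmas in this section, exploiting the fact that $N=36$ admits a rich tower of intermediate modular curves and several computable automorphisms. First I would compute $g_{\Delta_1}(36)$ explicitly via the genus formula of \cite[Theorem 1.1]{J-K3} and verify that it is large enough (at least $6$) so that Proposition \ref{prop:unique} applies: any putative bielliptic involution $v$ on $X_{\Delta_1}(36)$ would be unique, defined over $\Q$, and central in $Aut(X_{\Delta_1}(36))$. If this genus bound fails, I would instead argue along the lines of Lemma \ref{lem:fp-method}, since $X_{\Delta_1}(36)$ covers $X_{\Delta_2}(36)$, a genus-$3$ non-hyperelliptic curve.

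Next I would map down through the tower: $X_{\Delta_1}(36)$ covers $X_0(36)$ (genus $1$) and covers $X_{\Delta_2}(36)$ (genus $3$, non-hyperelliptic, bielliptic). Applying Lemma \ref{galoiscover} to the cover $X_{\Delta_1}(36)\to X_{\Delta_2}(36)$, the induced involution $\widetilde{v}$ on $X_{\Delta_2}(36)$ would have to be either hyperelliptic or bielliptic; since $X_{\Delta_2}(36)$ has genus $3$ it is hyperelliptic only if its hyperelliptic involution lifts. By Theorem \ref{thm:bielliptic}, the bielliptic involutions of $X_{\Delta_2}(36)$ are among $[5]$, $\widehat{W}_{36}$, $[5]\widehat{W}_{36}$; I would then check, one at a time, whether each can be induced from an involution on $X_{\Delta_1}(36)$. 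For the Fricke-type candidates $[a]\widehat{W}_{36}$, Lemma \ref{Frickeinvolutions} shows these are never defined over $\Q$ when $\Delta_1\subsetneq(\Z/36\Z)^*$, contradicting Proposition \ref{prop:unique}. For the diamond-type $[5]$, a fixed-point count through the technique of Section \ref{sec:Atkin} combined with the Hurwitz formula for $X_{\Delta_1}(36)\to X_{\Delta_1}(36)/\langle v\rangle$ should give the wrong number of fixed points for a genus-$1$ quotient.

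The remaining possibility is that $v$ is an exceptional automorphism not lying in the normalizer of $\Gamma_{\Delta_1}(36)$ in $PSL_2(\R)$ described by \cite{Ze}. Here the cleanest route is simply to cite \cite[Lemma 2.6]{J-K2}, which already rules this case out using the normalizer structure; alternatively one can argue as in the proof of Lemma \ref{lem:delta37} by computing a canonical model of $X_{\Delta_1}(36)$ from $q$-expansions of weight-$2$ cusp forms for $\Gamma_{\Delta_1}(36)$, reducing modulo a suitable prime (e.g., $p=5$ or $p=7$), and using MAGMA to compute $|Aut_{\overline{\F_p}}(\bar{C})|$ over small extensions of $\F_p$ to bound the order of $Aut(X_{\Delta_1}(36))$.

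The main obstacle is the exceptional automorphism case: once the normalizer-based candidates are eliminated by cusp-field and centrality arguments, one still has to rule out unexpected automorphisms, and there is no analog of \cite{Ka2} or \cite{Mo} available for $N=36$ (which is not squarefree and is small enough that general theorems do not apply). This is exactly why the argument in \cite[Lemma 2.6]{J-K2} relies on explicit normalizer computations rather than a general classification, and it is the step for which I would simply invoke that reference rather than reprove it here.
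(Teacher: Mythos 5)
There is a genuine gap, and it sits at the heart of your argument: $X_{\Delta_1}(36)$ does \emph{not} cover $X_{\Delta_2}(36)$. From the tables, $\Delta_1=\{\pm 1,\pm 17\}$ has order $4$ and $\Delta_2=\{\pm 1,\pm 11,\pm 13\}$ has order $6$, so $\Delta_1\not\subseteq\Delta_2$ (indeed $17\notin\Delta_2$, and $4\nmid 6$); the covering $X_{\Delta_1}(36)\to X_{\Delta_2}(36)$ on which both your main line (via Lemma \ref{galoiscover}) and your fallback (via Lemma \ref{lem:fp-method}) depend simply does not exist. Worse, since $(\Z/36\Z)^*\cong C_2\times C_6$ has order $12$, the only subgroup strictly containing $\Delta_1$ is the full group, so the only curve of the form $X_{\Delta}(36)$ below $X_{\Delta_1}(36)$ is $X_0(36)$, which has genus $1$ --- too small for Lemma \ref{galoiscover}, Lemma \ref{lem:fp-method}, or the cusp-field arguments of Lemmas \ref{lem:cusp1} and \ref{lem:cusp2}. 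One can also map $X_{\Delta_1}(36)\cong X_1(2,18)$ onto $X_1(18)$ with degree $2$, but $X_1(18)$ has genus $2$ and is hyperelliptic, so again Lemma \ref{lem:fp-method} is unavailable, and Castelnuovo only gives $g\le 2\cdot 2+2\cdot 1+1=7$, which is exactly $g_{\Delta_1}(36)=7$ and yields no contradiction. This unavailability of every Section-4 criterion is precisely why the paper does not attempt an internal proof.

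In fact the paper's ``proof'' of this lemma is nothing but the citation: the result is \cite[Lemma 2.6]{J-K2}, where it is established using the structure of the normalizer of $\Gamma_{\Delta_1}(36)$ in ${\rm PSL}_2(\R)$ (note $X_{\Delta_1}(36)\cong X_1(2,18)$, which is why it falls under that earlier paper on $X_1(M,N)$). So the final sentence of your proposal --- ``simply cite \cite[Lemma 2.6]{J-K2}'' --- is not a patch for one residual exceptional case; it is the entire argument, and the intermediate machinery you build before reaching it is both unnecessary and, as explained above, unsound. If you want a self-contained proof you would need either the normalizer computation of \cite{J-K2} or an explicit model/automorphism-group computation in the style of Lemma \ref{lem:delta37}; the tower-of-coverings strategy cannot be made to work here.
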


\begin{Rk}\label{whynot}
	The reader might wonder why we didn't systematically use \cite{J-K2},
	which decides for every curve $X_1(M,N)$ whether it is bielliptic or not.
	After all, conjugating $\Gamma_1(M,N)$ with ${M\ 0\choose 0\ 1}$, $X_1(M,N)$ 
	is isomorphic to some $X_{\Delta}(MN)$ with 
	$\Delta=\{a\in (\Z/NZ)^*\ :\ a\equiv \pm 1\ mod\ N\}$. 
	The answer is that this would mainly apply to curves which we have settled 
	in Corollary \ref{cor:all} as corollaries to other cases that are not 
	isomorphic to any $X_1(M,N)$, and that hence had to be dealt with anyway.
\end{Rk}

\section{Bielliptic curves}\label{sec:bielliptic}

In this section, we will show that the remaining curves $X_\Delta(N)$ 
are bielliptic. 
By \cite{J-K2} the curves $X_1(2,14)$, $X_1(2,16)$, $X_1(3,12)$, 
$X_1(4,12)$, $X_1(5,10)$, $X_1(7,7)$, and $X_1(8,8)$ are bielliptic. 
The first four are isomorphic (over $\C$) to $X_{\Delta_1}(28)$, 
$X_{\Delta_1}(32)$, $X_{\Delta_2}(36)$, $X_{\Delta_6}(48)$, respectively.
The other three are already known under different names.

\begin{Rk}\label{extremal}
The following curves are famous extremal examples.
\begin{itemize}
\item 
$X_1(5,10)$, which over $\C$ is isomorphic to $X_{\Delta_2}(50)$, is
Bring's curve, which we already discussed in the previous section.
It has $10$ bielliptic involutions, and is the unique curve of genus 
$4$ with more than $6$ bielliptic involutions. 
(See \cite[Corollary 6.9]{C-D}.)
\item 
$X(7)$, which over $\C$ is isomorphic to $X_{\Delta_2}(49)$, is the 
famous Klein quartic. Its automorphism group is the simple group 
$PSL_2(\F_7)$ of order $168$, and it has $21$ bielliptic involutions, 
the maximum possible for a curve of genus $3$.
\item 
$X(8)$, which over $\C$ is isomorphic to $X_{\Delta_3}(64)$, is the
Wiman curve, the unique curve of genus $5$ with the maximum possible
of $192$ automorphisms. It has exactly $3$ bielliptic involutions. 
(See \cite[Lemma 4.5]{B-K-X} and \cite{K-M-V}.)
\end{itemize}
See below for some methods to explicitly find some of their bielliptic
involutions.
\end{Rk}

Some of the remaining curves $X_{\Delta}(N)$ are easily seen to be 
bielliptic, as they are double covers of an elliptic curve $X_0(N)$,
to wit, $X_{\Delta_1}(24)$, $X_{\Delta_2}(24)$, and $X_{\Delta_2}(36)$.
Correspondingly, there is a bielliptic involution of the form $[a]$.
\par
Similarly, there is a degree $2$ map from $X_{\Delta_1}(32)$ to the
elliptic curve $X_{\Delta_2}(32)$ and a degree $2$ map from
$X_{\Delta_1}(28)$ to the elliptic curve $X_1(14)$.
\par
More generally, there can be a degree $2$ map from $X_{\Delta}(N)$
to an elliptic curve $X_{\Delta'}(N/2)$. This happens for 
$X_{\Delta_6}(40)$, $X_{\Delta_6}(48)$, and $X_{\Delta_3}(64)$, mapping 
to $X_{\Delta_1}(20)$, $X_{\Delta_3}(24)$, $X_{\Delta_2}(32)$, respectively.
\par
The curves $X_{\Delta_1}(30)$, $X_{\Delta_2}(33)$, $X_{\Delta_4}(35)$,
$X_{\Delta_4}(39)$, $X_{\Delta_6}(40)$, $X_{\Delta_4}(41)$, $X_{\Delta_6}(48)$,
are of genus $5$ and double covers of the hyperelliptic genus $3$ 
curve $X_0(N)$. By \cite[p. 50]{Ac2} they must be hyperelliptic 
(which is excluded by Theorem \ref{thm:subhyper}) or bielliptic.
Consequently, in these cases the two lifts of the 
hyperelliptic involution of $X_0(N)$ to involutions of 
$X_{\Delta_i}(N)$ both are bielliptic involutions. 
\par
Another criterion by Accola \cite[Corollary 1]{Ac1} says that 
a curve $X$ of genus $4$ that is an unramified degree $3$ Galois 
cover of a genus $2$ curve $Y$ will be bielliptic. More precisely, 
by \cite[Lemma 2]{Ac1} then $X$ has at least $3$ bielliptic 
involutions, namely the $3$ lifts of the hyperelliptic involution 
of $Y$. This applies to $X_{\Delta_1}(26)$, $X_{\Delta_1}(28)$, and 
$X_{\Delta_3}(37)$, all three being unramified degree $3$ covers 
of the corresponding $X_0(N)$. The latter curve is highly 
interesting, and we investigate it in detail.

\begin{Lem}\label{lem:37biell} 
$X_{\Delta_3}(37)$ is a bielliptic curve, but any bielliptic involution
must be an exceptional automorphism.
\end{Lem}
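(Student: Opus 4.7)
The first assertion is a direct consequence of the discussion in the paragraph preceding the lemma. The curve $X_{\Delta_3}(37)$ has genus $4$ and is a degree $3$ Galois cover of the genus $2$ curve $X_0(37)$; the Hurwitz formula $6 = 3\cdot 2 + \sum(e_P-1)$ forces this cover to be unramified, so \cite[Corollary 1]{Ac1} gives biellipticity.

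The plan for the second assertion is to enumerate the non-exceptional involutions of $X_{\Delta_3}(37)$ and rule each of them out. The Galois group $G=Gal(X_{\Delta_3}(37)/X_0(37))\cong (\Z/37\Z)^*/\Delta_3$ has order $3$ and hence no involutions, and because $37$ is prime the only Atkin-Lehner involution on $X_0(37)$ is the Fricke involution $W_{37}$. By Lemma \ref{Frickeinvolutions}, the three lifts $\widehat{W}_{37}$, $[a]\widehat{W}_{37}$, $[a^2]\widehat{W}_{37}$ of $W_{37}$ (with $a$ a generator of $G$) are all involutions, and these are the only non-exceptional involutions of $X_{\Delta_3}(37)$.

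The key step is to show that each of these three involutions has exactly $2$ fixed points on $X_{\Delta_3}(37)$. The relation $\widehat{W}_{37}[b]\widehat{W}_{37}^{-1}=[\bar b]$ of Lemma \ref{Frickeinvolutions} shows $\langle G,\widehat{W}_{37}\rangle\cong S_3$, so the three lifts form a single conjugacy class and consequently have the same fixed-point count. Since $X_0^+(37)=X_0(37)/\langle W_{37}\rangle$ is elliptic, the Hurwitz formula gives that $W_{37}$ has exactly $2$ fixed points on $X_0(37)$. Because the cover $X_{\Delta_3}(37)\to X_0(37)$ is unramified, each such fixed point has exactly $3$ preimages, and since $G$ acts freely on the fibers no preimage can be fixed by two distinct lifts. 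Each lift, acting as an involution on a three-element fiber, must by parity fix exactly one preimage, yielding $2$ fixed points per lift.

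A final application of the Hurwitz formula shows that an involution with $2$ fixed points on a genus $4$ curve yields a quotient of genus $2$, not $1$, so none of $\widehat{W}_{37}$, $[a]\widehat{W}_{37}$, $[a^2]\widehat{W}_{37}$ is bielliptic; hence any bielliptic involution must be exceptional. The main obstacle in executing this plan is the fixed-point distribution argument for the three lifts, which rests on the $S_3$ structure of $\langle G,\widehat{W}_{37}\rangle$ together with the unramifiedness of the covering; everything else reduces to formulas already established in the paper.
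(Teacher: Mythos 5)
Your proposal is correct and follows essentially the same route as the paper: unramifiedness of the degree-$3$ cover plus Accola's corollary for biellipticity, then the identification of the non-exceptional involutions as the three conjugate lifts of $W_{37}$ in the group $S_3=\langle [2],\widehat{W}_{37}\rangle$, each shown to have exactly $2$ fixed points and hence a genus-$2$ quotient. The only (harmless) difference is bookkeeping: you count fixed points fiber-by-fiber over the two fixed points of $W_{37}$ on $X_0(37)$, whereas the paper gets the same count by applying the Hurwitz formula once to the degree-$6$ covering $X_{\Delta_3}(37)\to X_0(37)/W_{37}$.
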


\begin{proof}
$X_{\Delta_3}(37)$ has genus $4$ and is an unramified degree $3$ Galois cover 
of the genus $2$ curve $X_0(37)$. So by \cite[Corollary 1, p. 346]{Ac1} it 
is bielliptic.
\par
The non-exceptional automorphisms of $X_{\Delta_3}(37)$ form a group $S_3$ 
generated by $[2]$ (of order $3$) and the involution $\widehat{W}_{37}$. 
The quotient by this group is the elliptic curve $X_0(37)/W_{37}$. Applying
the Hurwitz formula to this $S_3$-covering, one sees that each of the $3$ 
(conjugate) involutions has $2$ fixed points. So any bielliptic involution 
must be exceptional.
\end{proof}

\begin{Rk}
This gives us another proof that $X_{\Delta_3}(37)$ is not hyperelliptic, 
as by the Castelnuovo inequality a bielliptic curve of genus $g>3$ 
cannot be hyperelliptic.
\par
But the more important point is that this contradicts the claim in
\cite{Mo} that $X_0(37)$ is the only curve for square-free $N$ with
exceptional automorphisms.
Given the situation, of course we want to determine the complete 
automorphism group of 
$X_{\Delta_3}(37)$. For that it will be convenient to prove the preceding 
lemma with a slightly more constructive approach.
\end{Rk}

\it Alternative Proof of Lemma \ref{lem:37biell}. \rm
Let $u$ ($=[2]$) be the automorphism of $X_{\Delta_3}(37)$ of order $3$
whose quotient is $X_0(37)$. Then $Aut(X_{\Delta_3}(37))$ contains 
the group of non-exceptional automorphisms 
$B=\langle u, \widehat{W}_{37}\rangle\cong S_3$. 
\par
$X_{\Delta_3}(37)$ has genus $4$ and is an unramified Galois cover of 
the genus $2$ curve $X_0(37)$. So by \cite[Corollary 4.13]{Ac2} the 
hyperelliptic involution of $X_0(37)$ lifts to an involution $w$ on 
$X_{\Delta_3}(37)$. 
Moreover, $w$ normalizes $B$, and hence they generate a non-abelian group 
of order $12$ that has a normal $3$-Sylow subgroup whose quotient group 
is non-cyclic. So the group of order $12$ is a dihedral group $D_6$. 
\par
The center of this dihedral group is an involution $v$. So $u$, which
commutes with $v$, permutes the fixed points of $v$. If there are two 
fixed points, $u$ must fix them. But we know that $u$ has no fixed 
points. So $v$ has $6$ fixed points and it is a bielliptic involution. 
\hfill $\Box$

\begin{Thm}\label{thm:Aut37}
The full automorphism group of the curve $X_{\Delta_3}(37)$ is isomorphic
to a dihedral group $D_6$ of order $12$. The exceptional automorphisms
are two automorphisms of order $6$ and the $4$ bielliptic involutions, 
of which $3$ are conjugate in $Aut(X_{\Delta_3}(37))$ and one is central. 
The quotient by the central involution is one of the two curves that 
are $3$-isogenous to the strong Weil curve with conductor $37$ and rank $0$
($37B1$ in Cremona's table \cite{Cr}).
\end{Thm}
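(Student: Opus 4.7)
The plan is to build on the alternative proof of Lemma \ref{lem:37biell}, which already exhibits a dihedral subgroup $A_0 := \langle B, w\rangle \cong S_3 \times C_2$ of $A := Aut(X_{\Delta_3}(37))$, where $B = \langle [2], \widehat{W}_{37}\rangle \cong S_3$ is the group of non-exceptional automorphisms and $v \in A_0$ is the central bielliptic involution produced there. What remains is to (i) sort the involutions of $A_0$ into bielliptic and non-bielliptic, (ii) show that $A = A_0$, and (iii) identify the elliptic quotient $E := X_{\Delta_3}(37)/\langle v\rangle$.

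For (i), the three involutions lying in $B$, namely $\widehat{W}_{37}$, $[2]\widehat{W}_{37}$ and $[4]\widehat{W}_{37}$, all project onto the Atkin--Lehner involution $W_{37}$ of $X_0(37)$, which has exactly $2$ fixed points on the genus-$2$ curve $X_0(37)$. Since the cover $X_{\Delta_3}(37)\to X_0(37)$ is \'etale of degree $3$, the method of Section \ref{sec:Atkin} (as illustrated in Example \ref{calcfixedpoints}) shows that each of these three involutions has only $2$ fixed points on $X_{\Delta_3}(37)$, so none of them is bielliptic; the four bielliptic involutions are therefore precisely the central $v$ and its three conjugates in the coset $vB$. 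For (ii), I imitate the computational argument of Lemma \ref{lem:delta37}: compute an explicit canonical model of the non-hyperelliptic genus-$4$ curve $X_{\Delta_3}(37)$ using $q$-expansions of a basis of weight-$2$ cusp forms for $\Gamma_{\Delta_3}(37)$, reduce modulo a prime of good reduction (for instance $p=5$), and compute $|Aut_{\overline{\F_5}}(\bar C)|$ in MAGMA. Via the injection $A \hookrightarrow Aut_{\overline{\F_5}}(\bar C)$ this yields $|A|\le 12$, which combined with $|A_0|=12$ forces $A=A_0$.

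For (iii), the \'etale cyclic $C_3$-cover $X_{\Delta_3}(37)\to X_0(37)$ produces a Prym-type isogeny decomposition $J(X_{\Delta_3}(37)) \sim J(X_0(37)) \times P$ with $\dim P = 2$, and the central involution $v$ acts on this decomposition; the $+1$-eigenspace of its induced action on $P$ is the $1$-dimensional factor corresponding to $E$. The fact that $A_0/\langle v\rangle \cong S_3$ acts on $E$ with its order-$3$ element fixed-point free (inherited from $[2]$ being fixed-point free on $X_{\Delta_3}(37)$) forces $E$ to admit a $\Q$-rational $3$-isogeny. A conductor computation places $E$ in an isogeny class of elliptic curves over $\Q$ of conductor dividing a power of $37$; since the factor $37A$ has already been accounted for by $X_0^+(37)$, the curve $E$ must lie in the isogeny class $37B$, and the existence of a $3$-isogeny together with the fact that $37B1$ (rank $0$, the strong Weil curve) is the unique vertex of the isogeny graph whose $3$-neighbours are the two remaining curves in the class pins down $E$ as one of those two curves.

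The main obstacle is step (ii): the purely group-theoretic bound from \cite[Corollary 6.9]{C-D} on the number of bielliptic involutions of a non-Bring genus-$4$ curve cannot by itself exclude strict extensions of $D_6$ inside $A$, so one really needs the MAGMA reduction argument to close the gap. The non-routine part of that step is producing a sufficiently explicit smooth model of $X_{\Delta_3}(37)$ over $\Q$ to which reduction mod $5$ can be applied; once the model is in hand, the MAGMA calculation of $|Aut_{\overline{\F_5}}(\bar C)|$ is mechanical.
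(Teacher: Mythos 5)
Your overall architecture --- start from the dihedral group of order $12$ produced by the alternative proof of Lemma \ref{lem:37biell} and then show there is nothing else --- matches the paper, but your step (ii), the only step that actually bounds $|A|$, has a genuine gap. You propose to reduce a canonical model modulo $5$ and ``compute $|Aut_{\overline{\F_5}}(\bar C)|$ in MAGMA''. MAGMA computes automorphism groups over a \emph{specified finite} constant field $\F_{5^k}$, not over $\overline{\F_5}$; to turn finitely many such computations into a bound on $Aut_{\overline{\F_5}}(\bar C)$ you need an a priori bound on a $k$ such that \emph{all} automorphisms are defined over $\F_{5^k}$. In Lemma \ref{lem:delta37} the paper obtains such a bound only for the \emph{bielliptic involutions} (at most $6$ of them by Casnati--Del Centina, hence all defined over a field whose Galois group embeds in $S_6$, whence $k\le 6$); no analogous bound for the full automorphism group is available there, and your proposal does not supply one. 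The paper closes this step by pure group theory instead: Hurwitz gives $p\le 2g+1=9$ for primes $p$ dividing $|A|$; $p=7$ is excluded by the one-fixed-point argument, $p=5$ because the curve is not Bring's curve, and $9\nmid |A|$; then $D_6$ is shown to be the normalizer of a $3$-Sylow subgroup, Sylow counting plus the genus-$4$ bound $|A|\le 120$ leaves $|A|\in\{12,48\}$, and $48$ is excluded because it would force $A/\langle v\rangle\cong S_4$ to act on the genus-$1$ curve $X/\langle v\rangle$. Your computation, as described, does not replace this chain.

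Two smaller points. In (i) you correctly rule out the three involutions of $B$ (each has only $2$ fixed points), but you never verify that the three remaining non-central involutions \emph{are} bielliptic; that requires an Accola-type relation as in Lemma \ref{fourrule} or a fixed-point count, not mere elimination. In (iii) the claim that $E$ is the $+1$-eigenspace of $v$ on the Prym part $P$ is incorrect: $v$ induces $W_{37}h$ (not $h$) on $X_0(37)$, because the order-$3$ automorphism induced on $E$ by $[2]$ is fixed-point free and so $E/\langle [2]\rangle=X_0(37)/\langle \bar v\rangle$ must have genus $1$; hence $v$ already has a one-dimensional $+1$-eigenspace inside $J_0(37)$, namely $X_0(37)/(W_{37}h)\sim 37B$, and since $\dim E=1$ it follows that $v$ acts as $-1$ on all of $P$. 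The identification of $E$ then comes from the cyclic degree-$3$ quotient map $E\to E/\langle [2]\rangle = X_0(37)/(W_{37}h)=37B1$, which (as $37B1$ has no CM) forces $E$ to be one of the two curves $3$-isogenous to the strong Weil curve, as asserted.
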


\begin{proof} 
In the previous elaborations we have seen that $A:=Aut(X_{\Delta_3}(37))$ 
contains a subgroup $D$, isomorphic to $D_6$, that contains $u$ ($=[2]$), 
an automorphism of order $3$ with $X_{\Delta_3}(37)/\langle u\rangle = X_0(37)$.
Now we show equality. 
\par
Let $p$ be a prime dividing $|A|$. Then $p\leq 2g+1=9$ by the Hurwitz 
formula. But $p=7$ is not possible, as then the automorphism of order 
$7$ would have exactly one fixed point, which is known to be impossible 
(see for example \cite[4.15.3, p. 41]{Ac2}). 
Also, $p=5$ is not possible, for by the same arguments as in the 
proof of Lemma \ref{lem:delta25}, $X_{\Delta_3}(37)$ is not isomorphic 
to Bring's curve. Finally, $|A|$ is not divisible by $9$. Otherwise 
the nontrivial center of the $3$-Sylow subgroup would lead to an 
automorphism of order $3$ on $X_0(37)$.
\par
As $Aut(X_0(37))$ has order $4$, $D$ is the normalizer of $u$ in $A$.
So the index of $D$ in $A$ equals the number of $3$-Sylow subgroups,
and hence must be congruent to $1$ modulo $3$. Since for genus $4$ the 
order of the automorphism group is bounded by $120$, this leaves only 
$12$ and $48$ as possible orders for $|A|$. 
\par
If $|A|=48$, the action on the four $3$-Sylow subgroups induces 
a homomorphism from $A$ into $S_4$. The kernel of this homomorphism 
must be a normal subgroup of $D$. But it cannot contain $u$; otherwise
$A$ would have a subgroup $C_3\times C_3$. So this kernel can only be 
the center of $D$, which is the bielliptic involution $v$. Thus, if 
$|A|=48$, then $A$ would induce a group of automorphisms
$A/\langle v\rangle\cong S_4$ on $X_{\Delta_3}(37)/\langle v\rangle$, 
which is known to be impossible on a genus $1$ curve.
\par
So we have shown $A\cong D_6$ and the rest of the theorem follows
easily from the structure of that group.
\end{proof}

To deal with the remaining genus $4$ cases we use

\begin{Lem}\label{fourrule}
Let $X$ be a non-hyperelliptic curve of genus $4$ that has an involution
$u$ such that $X/\langle u\rangle$ has genus $2$. If there exists an involution 
$v$ of $X$ that commutes with $u$ and induces the hyperelliptic involution on 
$X/\langle u\rangle$, then $v$ and $uv$ are bielliptic involutions of $X$.	 
\end{Lem}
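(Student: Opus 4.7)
The plan is to exploit the Klein four group generated by $u$ and $v$. First I would observe that $u\neq v$, since $u$ acts trivially on $Y=X/\langle u\rangle$ while $v$ induces the non-trivial hyperelliptic involution there; together with commutativity this makes $G:=\langle u,v\rangle$ isomorphic to $C_2\times C_2$. The quotient $X/G$ equals $Y$ modulo its hyperelliptic involution, hence is $\mathbb{P}^1$. So there are three intermediate double covers $X\to X/\langle u\rangle$, $X\to X/\langle v\rangle$, $X\to X/\langle uv\rangle$, of genera $2$, $g_2$, $g_3$ respectively, sitting inside the Galois cover $X\to X/G\cong\mathbb{P}^1$ of degree $4$.

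Next I would apply Riemann--Hurwitz to each of the three double covers. Denoting by $f_u$, $f_v$, $f_{uv}$ the number of fixed points on $X$ of the corresponding involutions, one obtains $f_u=2$, $f_v=10-4g_2$, $f_{uv}=10-4g_3$ from $g(X)=4$ and $g(Y)=2$. Applying Riemann--Hurwitz to $X\to X/G\cong\mathbb{P}^1$ gives total ramification $R=14$. Since we are in characteristic $0$, the stabilizer in $G$ of any point of $X$ is cyclic, hence either trivial or one of the three order-$2$ subgroups of $G$; consequently every ramified point contributes exactly $1$ to $R$ and is counted by exactly one of $f_u$, $f_v$, $f_{uv}$. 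Thus $R=f_u+f_v+f_{uv}$, which simplifies to $g_2+g_3=2$.

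Finally, since $X$ is not hyperelliptic, neither $X/\langle v\rangle$ nor $X/\langle uv\rangle$ can have genus $0$; otherwise $X$ would be a degree-$2$ cover of $\mathbb{P}^1$. Therefore $g_2,g_3\geq 1$, which combined with $g_2+g_3=2$ forces $g_2=g_3=1$. This shows that both $v$ and $uv$ are bielliptic involutions of $X$. There is no serious obstacle here: the only subtle point is the ramification count in the Klein four cover, which rests on the cyclicity of point stabilizers in characteristic zero; the rest is bookkeeping with Riemann--Hurwitz plus the non-hyperellipticity hypothesis to exclude the degenerate splittings $(g_2,g_3)=(0,2)$ and $(2,0)$.
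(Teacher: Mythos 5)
Your proposal is correct and takes essentially the same route as the paper: the paper's one-line proof simply cites Accola's relation $g(X/\langle u\rangle)+g(X/\langle v\rangle)+g(X/\langle uv\rangle)=g(X)+2g(X/\langle u,v\rangle)$ for the Klein four group $\langle u,v\rangle$ and then invokes non-hyperellipticity exactly as you do in your final paragraph. The only difference is that you rederive that relation in the case $g(X/\langle u,v\rangle)=0$ from Riemann--Hurwitz and the cyclicity of point stabilizers, where the paper defers to \cite[p.~49]{Ac2}.
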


\begin{proof} 
By \cite[p.49]{Ac2} we have
$$g(X/\langle u\rangle)+g(X/\langle v\rangle)+g(X/\langle uv\rangle)=g(X)+2g(X/\langle u,v\rangle)=4.$$
Since $X$ is not hyperelliptic this implies $g(X/\langle v\rangle)=g(X/\langle uv\rangle)=1$.
\end{proof}

Lemma \ref{fourrule} is much weaker than the criteria by Accola we discussed earlier in the sense that 
the hyperelliptic involution of $X/\langle u\rangle$ does not automatically lift to an involution of $X$.
It applies to $X_{\Delta_2}(26)$, $X_{\Delta_2}(28)$, $X_{\Delta_2}(29)$ and $X_{\Delta_2}(50)$. 
\par 
Actually, Accola's criteria generalize the classical result that a genus $3$ curve that has an involution 
without fixed points must be hyperelliptic \cite[Lemma 5.10]{Ac2}. This immediately implies that on 
a non-hyperelliptic curve of genus $3$ every involution (if there are) is bielliptic. So the treatment 
of the genus $3$ curves boils down to finding some involutions, for example with the help of Section
\ref{sec:autos}. For the hyperelliptic curve we have

\begin{Exm}\label{aut21}
The genus $3$ curve $X_{\Delta_1}(21)$ was already shown 
in \cite{J-K3} to be hyperelliptic. Using its equation
$$y^2 = (x^2 -x+1)(x^6 +x^5 -6x^4 -3x^3 +14x^2 -7x+1)$$
from \cite[Table 11]{B-G-G-P}, {\tt MAGMA} determines its 
automorphism group to be $D_6$; so there are no exceptional
automorphisms. The center is the hyperelliptic involution 
$\widehat{W}_3 ={9\ \ -4\choose 21\ -9}$. The remaining $6$ 
involutions come in two conjugacy classes, one without fixed points,
the other one bielliptic. Since $W_7$ has no fixed points on $X_0(21)$,
the same must hold for $\widehat{W}_7$ on $X_{\Delta_1}(21)$. So the
bielliptic involutions are $\widehat{W}_{21}$, 
$[2]\widehat{W}_{21}$, and $[4]\widehat{W}_{21}$.
\end{Exm}

So far our elaborations account for $21$ of the remaining $25$ curves. For the last 
$4$ curves, namely $X_{\Delta_2}(34)$, $X_{\Delta_3}(35)$, $X_{\Delta_4}(45)$ and 
$X_{\Delta_4}(55)$ we applied the method from Section \ref{sec:Atkin}. Compare Example 
\ref{calcfixedpoints} and Remark \ref{45and64}. 
For the full computations not treated in Example \ref{calcfixedpoints}. one can consult the web page \cite{J}.


\section{Quadratic points}\label{sec:quadratic}

In this section, we determine all the $X_\Delta(N)$ which admit infinitely 
many quadratic points over $\Q$. If $X_\Delta(N)$ is subhyperelliptic,
then it has infinitely many quadratic points because there exists
a $\Q$-rational map of degree 2 to the projective line. 
\par
Now suppose that $X_\Delta(N)$ has infinitely many quadratic points 
but is not subhyperelliptic. Then it must be bielliptic by 
\cite[Corollary 3]{H-S}. This reduces the remaining candidates 
to those listed in Theorem \ref{thm:bielliptic}. 
\par
More precisely, we have the following ``if and only if''-statement 
over a fixed based field $k$.

\begin{Thm}\label{thm:precise} \cite[Theorem 2.14]{B2} 
Let $k$ be a number field and $X$ a non-hyperelliptic curve of genus 
$g\geq 3$ over $k$ with a $k$-rational point. Then $X$ has infinitely 
many quadratic points over $k$ if and only if it has a bielliptic
involution $v$ defined over $k$ such that the elliptic curve 
$X/\langle v\rangle$ has positive rank over $k$.
\end{Thm}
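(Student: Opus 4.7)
The plan is to prove both directions separately, with the reverse implication being routine and the forward implication relying on Faltings' theorem for subvarieties of abelian varieties.

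For the easy direction ($\Leftarrow$), suppose $v$ is a bielliptic involution of $X$ defined over $k$ and that $E=X/\langle v\rangle$ has positive rank over $k$. Since $X(k)\neq\emptyset$, its image provides a $k$-rational base point for $E$, so $E$ is honestly an elliptic curve over $k$ and the quotient morphism $\phi:X\to E$ is defined over $k$. Pulling back the infinitely many points of $E(k)$, we obtain for each $P\in E(k)$ a fibre $\phi^{-1}(P)$ of size one (if $P$ is a branch point) or two. Since $v$ has only finitely many fixed points, infinitely many fibres consist of a Galois-stable pair $\{Q,Q^\sigma\}$, i.e. a quadratic point (or a pair of rational points), giving infinitely many quadratic points over $k$.

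For the hard direction ($\Rightarrow$), the idea is to move from quadratic points on $X$ to $k$-rational points on the symmetric square $X^{(2)}$, and then invoke Faltings. Fixing the $k$-rational base point $P_0\in X(k)$, the Abel--Jacobi map
\[
X^{(2)}\longrightarrow J:=\mathrm{Jac}(X),\qquad \{P,Q\}\longmapsto [P+Q-2P_0]
\]
is defined over $k$ and, because $X$ is non-hyperelliptic, is a birational morphism onto its image $W_2\subset J$. Every quadratic point and every unordered pair of $k$-rational points yields a $k$-rational point of $W_2$; so $W_2(k)$ is infinite. By Faltings' theorem (Mordell--Lang for subvarieties of abelian varieties), the Zariski closure of $W_2(k)$ is a finite union of $k$-rational translates of abelian subvarieties of $J$. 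As $W_2(k)$ is infinite and $W_2$ is a surface, there exists a positive-dimensional abelian subvariety $A\subseteq J$ defined over $k$ and a point $T\in J(k)$ with $T+A\subseteq W_2$.

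The structural step, due to Abramovich--Harris--Silverman in this generality, is to translate this positive-dimensional family of effective degree-$2$ divisors inside $W_2$ into a morphism. Since $X$ is non-hyperelliptic, $W_2$ contains no rational fibration by $\mathbb{P}^1$; the only way to contain a translate of a positive-dimensional abelian variety is for $X$ to admit a degree-$2$ cover of an elliptic curve, namely $E:=J/A'$ for the complementary abelian subvariety, and the fibres of $X\to E$ recover the translated family. Because $A$ (hence $A'$ and $E$) and the family of divisors are all $k$-rational, this map $\phi:X\to E$ is defined over $k$, its deck transformation gives the sought $v$, and the infinitely many $k$-rational points in $T+A$ push forward to infinitely many points of $E(k)$, so $E$ has positive rank.

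The main obstacle is the descent in the last paragraph: that the degree-$2$ map to $E$, a priori only guaranteed over $\bar k$, can genuinely be taken over $k$. For $g\geq 6$ Proposition \ref{prop:unique} makes this immediate, since the bielliptic involution is central in $\mathrm{Aut}(X)$ and therefore Galois-stable. For $3\le g\le 5$ one must argue that the particular $A$ supplied by Mordell--Lang is itself $k$-rational (true by the statement of Faltings' theorem), and that the associated degree-$2$ quotient of $J$ descends the map on $W_2$ to a $k$-rational morphism from $X$; the $k$-rational base point $P_0$ is what pins down this descent and avoids any twisting issue that could otherwise leave only a $\bar k$-model.
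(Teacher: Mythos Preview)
The paper does not prove this theorem; it is quoted from \cite[Theorem 2.14]{B2} and used as a black box. Your sketch follows the standard route (Faltings on subvarieties of abelian varieties, plus the Abramovich--Harris--Silverman analysis of abelian subvarieties of $W_2$), which is essentially what one finds in \cite{B2}, so there is nothing to compare against beyond that.

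Two places where the sketch is loose. First, you should argue that $\dim A=1$: if $\dim A=2$ then $T+A=W_2$, whence for any $P\in W_1$ one has $W_1\subseteq T-P+A$, so $W_1-W_1\subseteq A$; but $W_1$ generates $J$ of dimension $g\ge 3$, a contradiction. Second, the construction ``$E:=J/A'$ for the complementary abelian subvariety'' is not the transparent one, and it is not clear from what you wrote why the composite $X\hookrightarrow J\to J/A'$ has degree~$2$. The clean construction is the one you allude to in words: once $\dim A=1$, the family $\{T+a:a\in A\}$ of effective degree-$2$ divisors is the set of fibres of a degree-$2$ map $X\to C$ with $g(C)\le 1$ (this is the content of the Abramovich--Harris lemma), and non-hyperellipticity forces $g(C)=1$; this $C$ is isogenous to $A$ and inherits its $k$-structure. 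Your remark that for $3\le g\le 5$ the descent must come from the $k$-rationality of $A$ and $T$ supplied by Faltings, rather than from uniqueness of the bielliptic involution, is exactly the point.
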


Actually, except for the case $N=37$ we only need the weaker statement
that if a non-subhyperelliptic $X$ has infinitely many quadratic points
over $\Q$, then it must be bielliptic and its Jacobian variety must contain 
an elliptic curve $E$ with positive rank over $\Q$. 
\par
Now, if the Jacobian of $X_{\Delta}(N)$ or even of $X_1(N)$ contains
an elliptic curve $E$ over $\Q$, then the conductor of $E$ must divide 
$N$. From Cremona's tables \cite{Cr}, for the numbers $N$ from the
Table in Theorem \ref{thm:bielliptic} there exists no elliptic curve 
of positive rank over $\Q$ whose conductor divides $N$, except for
$N=37$. 

\begin{Lem}  $X_{\Delta_3}(37)$ has only finitely many quadratic points.
\end{Lem}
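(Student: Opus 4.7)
The plan is to apply Theorem \ref{thm:precise} to $X_{\Delta_3}(37)$, which has genus $4$ and carries the $\Q$-rational cusp $0$. It thus suffices to check that every $\Q$-rational bielliptic involution of $X_{\Delta_3}(37)$ produces a quotient elliptic curve of rank $0$ over $\Q$. By Theorem \ref{thm:Aut37}, $Aut(X_{\Delta_3}(37))\cong D_6$ has exactly four bielliptic involutions: one central element $v$, and three further involutions $v_1,v_2,v_3$ forming a single conjugacy class in $Aut(X_{\Delta_3}(37))$.

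I would first argue that $v$ is $\Q$-rational. Since $Gal(\overline{\Q}/\Q)$ acts on $Aut_{\overline{\Q}}(X_{\Delta_3}(37))$ by group automorphisms (via conjugation on points), it preserves the centre; as $v$ is the unique central involution it is Galois-stable, hence defined over $\Q$. Theorem \ref{thm:Aut37} then gives that $X_{\Delta_3}(37)/\langle v\rangle$ is $\Q$-isogenous to the elliptic curve $37B1$, which has rank $0$ over $\Q$; therefore the quotient itself has rank $0$ over $\Q$.

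Next, I would show that none of $v_1,v_2,v_3$ is $\Q$-rational. Realise $Aut(X_{\Delta_3}(37))$ as $D_6 = \langle r, s\rangle$ with $s = \widehat{W}_{37}$ and $r^2 = [2]$; then $r^3 = v$, and $r = v\cdot [2]^2$ is a product of two $\Q$-rational automorphisms, so $r$ itself is $\Q$-rational. Under this identification the three non-exceptional involutions $\widehat{W}_{37},\, [2]\widehat{W}_{37},\, [2]^2\widehat{W}_{37}$ are the reflections $s, sr^2, sr^4$, while $v_1,v_2,v_3$ are the reflections $sr, sr^3, sr^5$. Since $|(\Z/37\Z)^*/\Delta_3|=3$, Lemma \ref{Frickeinvolutions} gives that $s$ is defined over the unique cubic subfield $L\subset \Q(\zeta_{37})$, and $Gal(L/\Q)\cong C_3$ permutes $\{s, sr^2, sr^4\}$ cyclically. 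Because right-multiplication by the $\Q$-rational element $r$ is Galois-equivariant, $\{sr, sr^3, sr^5\}$ is likewise a single Galois orbit of size $3$, so none of the $v_i$ is defined over $\Q$.

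Combining the two steps, $v$ is the only $\Q$-rational bielliptic involution and its quotient has rank $0$ over $\Q$, so Theorem \ref{thm:precise} delivers the finiteness of quadratic points. The main obstacle is tracking the fields of definition of the exceptional involutions; the crux is establishing that the conjugate triple $\{v_1, v_2, v_3\}$ really forms a single Galois $3$-orbit rather than admitting a $\Q$-rational representative, and this reduces to the $\Q$-rationality of the order-$6$ element $r$ sitting inside the exceptional part of $Aut(X_{\Delta_3}(37))$.
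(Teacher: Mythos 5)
Your proof is correct, but it takes a genuinely different route from the paper's. The paper never analyzes which bielliptic involutions are $\Q$-rational: it argues directly on the Jacobian, noting via Stein's tables that the only positive-rank elliptic factor of $J_1(37)$ is $E=X_0(37)/W_{37}$, that the $\Q$-isogeny class of $E$ is a singleton, and that a degree-$2$ map $X_{\Delta_3}(37)\to E$ together with the natural degree-$6$ map through $X_0(37)$ would force an endomorphism of $E$ of degree $3$ --- impossible since $j(E)$ has a pole at $37$, so $E$ is non-CM and $\mathrm{End}(E)=\Z$. You instead use the full strength of Theorem \ref{thm:Aut37} together with Theorem \ref{thm:precise}: the central bielliptic involution $v$ is Galois-stable (Galois acts by group automorphisms and so preserves the centre of $D_6$), hence $\Q$-rational, and its quotient lies in the rank-$0$ class $37B$; the three conjugate exceptional involutions form a single Galois orbit because they are the right-translates by the $\Q$-rational order-$6$ element $r=v[2]^2$ of the set $\{[a]\widehat{W}_{37}\}$, whose transitivity under $\mathrm{Gal}(L/\Q)$ is exactly the content of Lemma \ref{Frickeinvolutions}. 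Both arguments are sound; yours avoids the somewhat delicate endomorphism-degree step, but it leans on the complete determination of $\mathrm{Aut}(X_{\Delta_3}(37))$ and in particular on the identification of $X_{\Delta_3}(37)/\langle v\rangle$ with a curve in the class $37B$ rather than $37A$, whereas the paper only needs the existence of a degree-$2$ map to a positive-rank elliptic curve. One small point you should make explicit: Theorem \ref{thm:precise} requires $X_{\Delta_3}(37)$ to be non-hyperelliptic, which is supplied by Theorem \ref{thm:subhyper}.
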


\begin{proof} Suppose $X_{\Delta_3}(37)$ has infinitely many quadratic 
points over $\Q$. Then its Jacobian variety must contain an elliptic 
curve $E$ of positive rank over $\Q$ and there must be a map of degree 
$2$ from $X_{\Delta_3}(37)$ to $E$. Now according to Stein's table 
\cite{St}, the Jacobian variety of $X_1(37)$ contains only one elliptic 
curve $E$ with positive rank over $\Q$, namely the quotient curve of 
$X_0(37)$ by $W_{37}$. There also is the natural map of degree $6$ from 
$X_{\Delta_3}(37)$ via $X_0(37)$ to $E$. Since the $\Q$-isogeny class of 
$E$ contains only one curve, a map of degree $2$ would imply the existence 
of an endomorphism of $E$ of degree $3$. But all endomorphisms of $E$ 
have degree $n^2$, since the endomorphism ring of $E$ is $\Z$. Note that 
CM-curves have integral $j$-invariant, but from Cremona's table \cite{Cr} 
the $j$-invariant of $E$ has a pole at $37$.
\end{proof}

\begin{Rk}
With practically the same proof one can see that the curve
$X_{\Delta_4}(37)$ has only finitely many quadratic points.
So to obtain that fact one doesn't need the lengthy proof 
that $X_{\Delta_4}(37)$ is not bielliptic.
\end{Rk}

Summarizing, we have proved Theorem \ref{thm:bielliptic2}.

\bigskip

\begin{center}
{\bf Acknowledgment}
\end{center}
The authors thank the referee (and several previous referees from
different journals) for detailed reports and many suggestions for 
improvement.

\bigskip

\section*{Appendix}

We summarize the classification of $X_0(N)$
accomplished by Ogg \cite{O2} for the hyperelliptic curves
and by Bars \cite{B1} for the bielliptic curves as follows:





\begin{center}

\begin{longtable}{l|l|c}
\caption{Classification of $X_0(N)$ to be rational, elliptic, hyperelliptic or bielliptic}\label{classification}\\

\hline  & $N$ & Number of $N$  \\ \hline
Rational & $1,\dots,10, 12, 13, 16, 18, 25$ &  15
 \\ \hline
Elliptic & $11, 14, 15, 17, 19, 20, 21, 24, 27, 32, 36, 49$ &  12
 \\ \hline
Hyperelliptic &  $22, 23, 26, 28, 29, 30, 31, 33, 35, 37, 39, 40, 41, 46,$  &  19\\
 & $47, 48, 50, 59, 71$ & 
 \\ \hline
Bielliptic & $22, 26, 28, 30, 33, 34, 35, 37, 38, 39, 40, 42, 43, 44, $ & 41\\
& $45, 48, 50, 51, 53, 54, 55, 56, 60, 61, 62, 63, 64, 65,$ & \\
& $69, 72, 75, 79, 81, 83, 89, 92, 94, 95, 101, 119, 131$ & 
 \\ \hline
\end{longtable}
\end{center}

In what follows, for each $N$ appearing in Table \ref{classification}, we list all possible $\Delta$ for which 
$g_\Delta(N)\le 1$ or no $\Delta$ in Table \ref{lowgenus} and those with $g_\Delta(N)\ge 2$ in Table \ref{highgenus} 
together with their genera, and we indicate where they are treated. 
We label the subgroup $\Delta$ depending on its order, that is, $\Delta_1, \Delta_2, \dots$ in the increasing order.

\begin{center}

\begin{longtable}{c|l|c}
\caption{List of $N$, $\Delta$ without intermediate $\Delta$ and with genera $g_\Delta(N)\le 1$}\label{lowgenus}\\

\hline $N$ & $\{\pm 1\} \subsetneq \Delta \subsetneq (\Z/N\Z)^*$ &
$g_\Delta(N)$ \\ \hline
 $1-12$ & none &  
 \\ \hline
 $13$ & $\Delta_1=\{ \pm 1, \pm 5\}$ & 0 
 \\
  & $\Delta_2=\{ \pm 1, \pm 3, \pm 4 \}$ & 0 
 \\ \hline
 $14$ & none &  
 \\ \hline
 $15$ & $\Delta_1=\{ \pm 1, \pm 4\}$ & 1 
 \\ \hline
 $16$ & $\Delta_1=\{ \pm 1, \pm 7\}$ & 0 
 \\ \hline
 $17$ & $\Delta_1=\{\pm 1,\pm 4\}$ & $1$ 
 \\
  & $\Delta_2=\{\pm 1,\pm 2,\pm 4,\pm 8\}$ & $1$ 
 \\ \hline
 $18$ & none &  
 \\ \hline
 $19$ & $\Delta_1=\{\pm 1,\pm 7,\pm 8\}$ & $1$ 
 \\ \hline
 $20$ & $\Delta_1=\{ \pm 1, \pm 9\}$ & 1 
 \\ \hline
 $21$ & $\Delta_2=\{\pm 1,\pm 4,\pm 5\}$ & $1$ 
 \\ \hline
 $22$ & none &  
 \\ \hline
 $23$ & none &  
 \\ \hline
 $24$ &  $\Delta_3=\{\pm 1,\pm 11\}$ & $1$ 
 \\ \hline
 $25$ & $\Delta_2=\{\pm 1,\pm 4,\pm 6,\pm 9,\pm 11\}$ & $0$ 
 \\ \hline
 $27$ & $\Delta_1=\{\pm 1,\pm 8,\pm 10\}$ & $1$ 
 \\ \hline
 $32$ & $\Delta_2=\{\pm 1,\pm 7,\pm 9,\pm 15\}$ & $1$ 
 \\ \hline
 $46$ & none &  
 \\ \hline
 $47$ & none &  
 \\ \hline
 $59$ & none &  
 \\ \hline
 $83$ & none &  
 \\ \hline
 $94$ & none &  
 \\ \hline
\end{longtable}
\end{center}

\begin{center}

\begin{longtable}{c|l|c|c}
\caption{List of $N$, $\Delta$ with genera $g_\Delta(N)\ge 2$}\label{highgenus}\\

\hline $N$ & $\{\pm 1\} \subsetneq \Delta \subsetneq (\Z/N\Z)^*$ &
$g_\Delta(N)$ & treated in
 \\ \hline
 $21$ & $\Delta_1=\{\pm 1,\pm 8\}$ & $3$ & Example \ref{aut21}
 \\ \hline
 $24$ & $\Delta_1=\{\pm 1,\pm 5\}$ & $3$ & Theorem \ref{thm:bielliptic}
 \\
  & $\Delta_2=\{\pm 1,\pm 7\}$ & $3$ & Theorem \ref{thm:bielliptic}
 \\ \hline
 $25$ & $\Delta_1=\{\pm 1,\pm 7\}$ & $4$ & Lemma \ref{lem:delta25}
 \\ \hline
 $26$ & $\Delta_1=\{\pm 1,\pm 5\}$ & $4$ & Theorem \ref{thm:bielliptic}
 \\
  & $\Delta_2=\{\pm 1,\pm 3,\pm 9\}$ & $4$ & Lemma \ref{fourrule}
 \\ \hline
 $28$ & $\Delta_1=\{\pm 1,\pm 13\}$ & $4$ & Theorem \ref{thm:bielliptic}
 \\
  & $\Delta_2=\{\pm 1,\pm 3,\pm 9\}$ & $4$ & Lemma \ref{fourrule}
 \\ \hline
 $29$ & $\Delta_1=\{\pm 1,\pm 12\}$ & $8$ & Lemma \ref{lem:cusp2}
 \\
  & $\Delta_2=\{\pm 1,\pm 4,\pm 5,\pm 6,\pm 7,\pm 9,\pm 13\}$ & $4$ & Lemma \ref{fourrule}
 \\ \hline
 $30$ & $\Delta_1=\{\pm 1,\pm 11\}$ & $5$ & Theorem \ref{thm:bielliptic}
 \\ \hline
 $31$ & $\Delta_1=\{\pm 1,\pm 5,\pm 6\}$ & $6$ & Lemma \ref{lem:cusp1}
 \\
  & $\Delta_2=\{\pm 1,\pm 2,\pm 4,\pm 8, \pm 15\}$ & $6$ & Lemma \ref{lem:cusp1}
 \\ \hline
 $32$ & $\Delta_1=\{\pm 1,\pm 15\}$ & $5$ & Theorem \ref{thm:bielliptic}
 \\ \hline
 $33$ & $\Delta_1=\{\pm 1,\pm 10\}$ & $11$ & Lemma \ref{lem:cusp2}
 \\
  & $\Delta_2=\{\pm 1,\pm 2,\pm 4,\pm 8, \pm 16\}$ & $5$ & Theorem \ref{thm:bielliptic}
 \\ \hline
 $34$ & $\Delta_1=\{\pm 1,\pm 13\}$ & $9$ & Lemma \ref{lem:castel-method}
 \\
  & $\Delta_2=\{\pm 1,\pm 9,\pm 13,\pm 15\}$ & $5$ & Example \ref{calcfixedpoints}
 \\ \hline
 $35$ & $\Delta_1=\{\pm 1,\pm 6\}$ & $13$ & Lemma \ref{lem:cusp2}
 \\
  & $\Delta_2=\{\pm 1,\pm 11,\pm 16\}$ & $9$ & Lemma \ref{lem:cusp2}
 \\
  & $\Delta_3=\{\pm 1,\pm 6,\pm 8,\pm 13\}$ & $7$ & Theorem \ref{thm:bielliptic}
 \\
  & $\Delta_4=\{\pm 1,\pm 4,\pm 6,\pm 9,\pm 11,\pm 16\}$ & $5$ & Theorem \ref{thm:bielliptic}
 \\ \hline
 $36$ & $\Delta_1=\{\pm 1,\pm 17\}$ & $7$ & Lemma \ref{lem:delta36}
 \\
  & $\Delta_2=\{\pm 1,\pm 11,\pm 13\}$ & $3$ & Theorem \ref{thm:bielliptic}
 \\ \hline
 $37$ & $\Delta_1=\{\pm 1,\pm 6\}$ & $16$ & Lemma \ref{lem:fp-app}
 \\
  & $\Delta_2=\{\pm 1,\pm 10,\pm 11\}$ & $10$ & Lemma \ref{lem:fp-app}
 \\
  & $\Delta_3=\{\pm 1,\pm 6,\pm 8,\pm 10,\pm 11,\pm 14\}$ & $4$ & Theorem \ref{thm:Aut37}
 \\
  & $\Delta_4=\{\pm 1,\pm 3,\pm 4,\pm 7,\pm 9,\pm 10,\pm 11,\pm 12,\pm 16\} $ & $4$ & Lemma \ref{lem:delta37}
 \\ \hline
 $38$ & $\Delta_1=\{\pm 1,\pm 7,\pm 11\}$ & $10$ & Lemma \ref{lem:cusp2}
 \\ \hline
 $39$ & $\Delta_1=\{\pm 1,\pm 14\}$ & $17$ & Corollary \ref{cor:all}
 \\
  & $\Delta_2=\{\pm 1,\pm 16,\pm 17\}$ & $9$ & Lemma \ref{lem:cusp2}
 \\
  & $\Delta_3=\{\pm 1,\pm 5,\pm 8,\pm 14\}$ & $9$ & Lemma \ref{lem:39and40}
 \\
  & $\Delta_4=\{\pm 1,\pm 4,\pm 10,\pm 14,\pm 16,\pm 17\} $ & $5$ & Theorem \ref{thm:bielliptic}
 \\ \hline
 $40$ & $\Delta_1=\{\pm 1,\pm 9\}$ & $13$ & Corollary \ref{cor:all}
 \\
  & $\Delta_2=\{\pm 1,\pm 11\}$ & $13$ & Lemma \ref{lem:fp-app}
 \\
  & $\Delta_3=\{\pm 1,\pm 19\}$ & $9$ & Lemma \ref{lem:fp-app}
 \\
  & $\Delta_4=\{\pm 1,\pm 3,\pm 9,\pm 13\}$ & $7$ & Lemma \ref{lem:39and40}
 \\
  & $\Delta_5=\{\pm 1,\pm 7,\pm 9,\pm 17\} $ & $7$ & Lemma \ref{lem:39and40}
 \\
  & $\Delta_6=\{\pm 1,\pm 9,\pm 11,\pm 19\} $ & $5$ & Theorem \ref{thm:bielliptic}
 \\ \hline
  $41$ & $\Delta_1=\{\pm 1,\pm 9\}$ & $21$ & Lemma \ref{lem:cusp2}
 \\
  & $\Delta_2=\{\pm 1,\pm 3,\pm 9,\pm 14\}$ & $11$ & Lemma \ref{lem:cusp2}
 \\
  & $\Delta_3=\{\pm 1,\pm 4,\pm 10,\pm 16,\pm 18\}$ & $11$ & Lemma \ref{lem:cusp2}
 \\
  & $\Delta_4=\{\pm 1,\pm 2,\pm 4,\pm 5,\pm 8,\pm 9,\pm 10,\pm 16,\pm 18,\pm 20\}$ & $5$ & Theorem \ref{thm:bielliptic}
 \\ \hline
 $42$ & $\Delta_1=\{\pm 1,\pm 13\}$ & $13$ & Lemma \ref{lem:cusp2}
 \\
  & $\Delta_2=\{\pm 1,\pm 5,\pm 17\}$ & $9$ & Lemma \ref{lem:cusp2}
 \\ \hline
 $43$ & $\Delta_1=\{\pm 1,\pm 6,\pm 7\}$ & $15$ & Lemma \ref{lem:cusp1}
 \\
  & $\Delta_2=\{\pm 1,\pm 2,\pm 4,\pm 8,\pm 11,\pm 16,\pm 21\}$ & $9$ & Lemma \ref{lem:cusp1}
 \\ \hline
 $44$ & $\Delta_1=\{\pm 1,\pm 21\}$ & $16$ & Lemma \ref{lem:fp-app}
 \\
  & $\Delta_2=\{\pm 1,\pm 5,\pm 7,\pm 9,\pm 19\}$ & $8$ & Lemma \ref{lem:fp-app}
 \\ \hline
  $45$ & $\Delta_1=\{\pm 1,\pm 19\}$ & $21$ & Corollary \ref{cor:all}
 \\
  & $\Delta_2=\{\pm 1,\pm 14,\pm 16\}$ & $9$ & Lemma \ref{lem:castel-method}
 \\
  & $\Delta_3=\{\pm 1,\pm 8,\pm 17,\pm 19\}$ & $11$ & Lemma \ref{lem:fp-app}
 \\
  & $\Delta_4=\{\pm 1,\pm 4,\pm 11,\pm 14,\pm 16,\pm 19\}$ & $5$ & Remark \ref{45and64}
 \\ \hline
  $48$ & $\Delta_1=\{\pm 1,\pm 7\}$ & $19$ & Corollary \ref{cor:all}
 \\
  & $\Delta_2=\{\pm 1,\pm 17\}$ & $19$ & Corollary \ref{cor:all}
 \\
  & $\Delta_3=\{\pm 1,\pm 23\}$ & $13$ & Corollary \ref{cor:all}
 \\
  & $\Delta_4=\{\pm 1,\pm 5,\pm 19,\pm 23\}$ & $7$ & Lemma \ref{lem:fp-app}
 \\
  & $\Delta_5=\{\pm 1,\pm 7,\pm 17,\pm 23\}$ & $7$ & Lemma \ref{lem:fp-app}
 \\
  & $\Delta_6=\{\pm 1,\pm 11,\pm 13,\pm 23\}$ & $5$ & Theorem \ref{thm:bielliptic}
 \\ \hline
 $49$ & $\Delta_1=\{\pm 1,\pm 18,\pm 19\}$ & $19$ & Lemma \ref{lem:castel-method}
 \\
  & $\Delta_2=\{\pm 1,\pm 6,\pm 8,\pm 13,\pm 15,\pm 20,\pm 22\}$ & $3$ & Remark \ref{extremal}
 \\ \hline
 $50$ & $\Delta_1=\{\pm 1,\pm 7\}$ & $22$ & Lemma \ref{lem:castel-method}
 \\
  & $\Delta_2=\{\pm 1,\pm 9,\pm 11,\pm 19,\pm 21\}$ & $4$ & Remark \ref{extremal}
 \\ \hline
 $51$ & $\Delta_1=\{\pm 1,\pm 16\}$ & $33$ & Lemma \ref{lem:cusp2}
 \\
  & $\Delta_2=\{\pm 1,\pm 4,\pm 13,\pm 16\}$ & $17$ & Lemma \ref{lem:cusp2}
 \\
  & $\Delta_3=\{\pm 1,\pm 2,\pm 4,\pm 8,\pm 13,\pm 16,\pm 19,\pm 25\}$ & $9$ & Lemma \ref{lem:cusp2}
 \\ \hline
 $53$ & $\Delta_1=\{\pm 1,\pm 23\}$ & $40$ & Lemma \ref{lem:cusp1}
 \\
  & $\Delta_2 =\{\pm 1,\pm 4,\pm 6,\pm 7,\pm 9, \pm 10,\pm 11,\pm 13,\pm 15,\pm 16,\pm 17,$  & $8$ & Lemma \ref{lem:cusp1} \\
  & \hspace{1.05cm} $\pm 24, \pm 25\}$ &  &
 \\ \hline
 $54$ & $\Delta_1=\{\pm 1,\pm 17,\pm 19\}$ & $10$ & Lemma \ref{lem:castel-method}
 \\ \hline
 $55$ & $\Delta_1=\{\pm 1,\pm 21\}$ & $41$ & Lemma \ref{lem:cusp2}
 \\
  & $\Delta_2=\{\pm 1,\pm 12,\pm 21,\pm 23\}$ & $21$ & Lemma \ref{lem:cusp2}
 \\
  & $\Delta_3=\{\pm 1,\pm 16,\pm 19,\pm 24,\pm 26\}$ & $17$ & Lemma \ref{lem:cusp2}
 \\
  & $\Delta_4=\{\pm 1,\pm 4,\pm 6,\pm 9,\pm 14,\pm 16,\pm 19,\pm 21,\pm 24,\pm 26\}$ & $9$ & Theorem \ref{thm:bielliptic}
 \\ \hline
 $56$ & $\Delta_1=\{\pm 1,\pm 13\}$ & $31$ & Corollary \ref{cor:all}
 \\
  & $\Delta_2=\{\pm 1,\pm 15\}$ & $31$ & Corollary \ref{cor:all}
 \\
  & $\Delta_3=\{\pm 1,\pm 27\}$ & $25$ & Corollary \ref{cor:all}
 \\
  & $\Delta_4=\{\pm 1,\pm 9,\pm 25\}$ & $21$ & Corollary \ref{cor:all}
 \\
  & $\Delta_5=\{\pm 1,\pm 13,\pm 15,\pm 27\}$ & $13$ & Lemma \ref{lem:castel-method}
 \\
  & $\Delta_6=\{\pm 1,\pm 5,\pm 9,\pm 11,\pm 13,\pm 25\}$ & $11$ & Lemma \ref{lem:fp-app}
 \\
  & $\Delta_7=\{\pm 1,\pm 9,\pm 15,\pm 17,\pm 23,\pm 25\}$ & $11$ & Lemma \ref{lem:fp-app}
 \\
  & $\Delta_8=\{\pm 1,\pm 3,\pm 9,\pm 19,\pm 25,\pm 27\}$ & $9$ & Lemma \ref{lem:elliptic}
 \\ \hline
 $60$ & $\Delta_1=\{\pm 1,\pm 11\}$ & $29$ & Lemma \ref{lem:cusp2}
 \\
  & $\Delta_2=\{\pm 1,\pm 19\}$ & $29$ & Lemma \ref{lem:cusp2}
 \\
  & $\Delta_3=\{\pm 1,\pm 29\}$ & $25$ & Lemma \ref{lem:cusp2}
 \\
  & $\Delta_4=\{\pm 1,\pm 7,\pm 11,\pm 17\}$ & $15$ & Lemma \ref{lem:cusp2}
 \\
  & $\Delta_5=\{\pm 1,\pm 11,\pm 13,\pm 23\}$ & $15$ & Lemma \ref{lem:cusp2}
 \\
  & $\Delta_6=\{\pm 1,\pm 11,\pm 19,\pm 29\}$ & $13$ & Lemma \ref{lem:cusp2}
 \\ \hline
 $61$ & $\Delta_1=\{\pm 1,\pm 11\}$ & $56$ & Lemma \ref{lem:cusp1}
 \\
  & $\Delta_2=\{\pm 1,\pm 13,\pm 14\}$ & $36$ & Lemma \ref{lem:cusp1}
 \\
  & $\Delta_3=\{\pm 1,\pm 3, \pm 9,\pm 20,\pm 27\}$ & $26$ & Lemma \ref{lem:cusp1}
 \\
  & $\Delta_4=\{\pm 1,\pm 11,\pm 13,\pm 14, \pm 21,\pm 29\}$ & $16$ & Lemma \ref{lem:cusp1}
 \\
  & $\Delta_5=\{\pm 1,\pm 3,\pm 8,\pm 9, \pm 11,\pm 20,\pm 23,\pm 24, \pm 27,\pm 28\}$ & $12$ & Lemma \ref{lem:cusp1}
 \\
  & $\Delta_6=\{\pm 1,\pm 3,\pm 4,\pm 5, \pm 9,\pm 12,\pm 13,\pm 14, \pm 15,\pm 16,\pm 19,$ & $8$ & Lemma \ref{lem:cusp1}
 \\
  & $\hspace{1.2cm}\pm 20,\pm 22,\pm 25,\pm 27\}$ &   &  
 \\ \hline
 $62$ & $\Delta_1=\{\pm 1,\pm 5,\pm 25\}$ & $31$ & Lemma \ref{lem:cusp2}
 \\
  & $\Delta_2=\{\pm 1,\pm 15,\pm 23,\pm 27,\pm 29\}$ & $19$ & Lemma \ref{lem:cusp2}
 \\ \hline
 $63$ & $\Delta_1=\{\pm 1,\pm 8\}$ & $49$ & Corollary \ref{cor:all}
 \\
  & $\Delta_2=\{\pm 1,\pm 4,\pm 16\}$ & $33$ & Corollary \ref{cor:all}
 \\
  & $\Delta_3=\{\pm 1,\pm 5,\pm 25\}$ & $33$ & Corollary \ref{cor:all}
 \\
  & $\Delta_4=\{\pm 1,\pm 17,\pm 26\}$ & $33$ & Corollary \ref{cor:all}
 \\
  & $\Delta_5=\{\pm 1,\pm 20,\pm 22\}$ & $25$ & Corollary \ref{cor:all}
 \\
  & $\Delta_6=\{\pm 1,\pm 2,\pm 4,\pm 8,\pm 16,\pm 31\}$ & $17$ & Lemma \ref{lem:fp-app}
 \\
  & $\Delta_7=\{\pm 1,\pm 5,\pm 8,\pm 11,\pm 23,\pm 25\}$ & $17$ & Lemma \ref{lem:fp-app}
 \\
  & $\Delta_8=\{\pm 1,\pm 8,\pm 10,\pm 17,\pm 19,\pm 26\}$ & $17$ & Lemma \ref{lem:fp-app}
 \\
  & $\Delta_9=\{\pm 1,\pm 8,\pm 13,\pm 20,\pm 22,\pm 29\}$ & $13$ & Lemma \ref{lem:elliptic}
 \\
  & $\Delta_{10}=\{\pm 1,\pm 4,\pm 5,\pm 16,\pm 17,\pm 20,\pm 22,\pm 25,\pm 26\}$ & $9$ & Lemma \ref{lem:castel-method}
 \\ \hline
 $64$ & $\Delta_1=\{\pm 1,\pm 31\}$ & $37$ & Corollary \ref{cor:all}
 \\
  & $\Delta_2=\{\pm 1,\pm 15,\pm 17,\pm 31\}$ & $13$ & Lemma \ref{lem:fp-app}
 \\
  & $\Delta_3=\{\pm 1,\pm 7,\pm 9,\pm 15,\pm 17,\pm 23,\pm 25,\pm 31\}$ & $5$ & Remark \ref{extremal}
 \\ \hline
$65$ & $\Delta_1=\{\pm 1,\pm 8\}$ & $55$ & Lemma \ref{lem:cusp1}
 \\
  & $\Delta_2=\{\pm 1,\pm 14\}$ & $61$ & Lemma \ref{lem:cusp1}
 \\
  & $\Delta_3=\{\pm 1,\pm 18\}$ & $55$ & Lemma \ref{lem:cusp1}
 \\
  & $\Delta_4=\{\pm 1,\pm 4,\pm 16\}$ & $41$ & Lemma \ref{lem:cusp1}
 \\
  & $\Delta_5=\{\pm 1,\pm 8,\pm 14,\pm 18\}$ & $25$ & Lemma \ref{lem:cusp1}
 \\
  & $\Delta_6=\{\pm 1,\pm 12,\pm 14,\pm 27\}$ & $31$ & Lemma \ref{lem:cusp1}
 \\
  & $\Delta_7=\{\pm 1,\pm 14, \pm 21,\pm 31\}$ & $31$ & Lemma \ref{lem:cusp1}
 \\
  & $\Delta_8=\{\pm 1,\pm 2,\pm 4,\pm 8, \pm 16,\pm 32\}$ & $19$ & Lemma \ref{lem:cusp1}
 \\
  & $\Delta_9=\{\pm 1,\pm 4,\pm 7,\pm 16, \pm 18,\pm 28\}$ & $19$ & Lemma \ref{lem:cusp1}
 \\
  & $\Delta_{10}=\{\pm 1,\pm 4,\pm 9,\pm 14, \pm 16,\pm 29\}$ & $21$ & Lemma \ref{lem:cusp1}
 \\
  & $\Delta_{11}=\{\pm 1,\pm 8, \pm 12,\pm 14,\pm 18,\pm 21,\pm 27, \pm 31\}$ & $13$ & Lemma \ref{lem:cusp1}
 \\
   & $\Delta_{12}=\{\pm 1,\pm 2,\pm 4,\pm 7, \pm 8,\pm 9,\pm 14,\pm 16, \pm 18,\pm 28,\pm 29,$ & $9$ & Lemma \ref{lem:cusp1}
 \\
  & $\hspace{1.3cm}\pm 32\}$ &   &  
 \\
  & $\Delta_{13}=\{\pm 1,\pm 4,\pm 6, \pm 9,\pm 11, \pm 14,\pm 16,\pm 19,\pm 21,\pm 24,\pm 29,$ & $11$ & Lemma \ref{lem:cusp1}
 \\
  & $\hspace{1.3cm}\pm 31\}$ &   &  
 \\  
 & $\Delta_{14}=\{\pm 1,\pm 3,\pm 4,\pm 9, \pm 12,\pm 14,\pm 16,\pm 17, \pm 22,\pm 23,\pm 27,$ & $11$ & Lemma \ref{lem:cusp1}
 \\
  & $\hspace{1.3cm}\pm 29\}$ &   &  
 \\
 \hline
 $69$ & $\Delta_1=\{\pm 1,\pm 22\}$ & $67$ & Lemma \ref{lem:cusp2}
 \\
  & $\Delta_2=\{\pm 1,\pm 4,\pm 5,\pm 11,\pm 13,\pm 14,\pm 16,\pm 17,\pm 20,\pm 25,\pm 31\}$ & $13$ & Lemma \ref{lem:elliptic}
 \\ \hline
 $71$ & $\Delta_1=\{\pm 1,\pm 5,\pm 14,\pm 17,\pm 25\}$ & $36$ & Lemma \ref{lem:cusp1}
 \\
  & $\Delta_2=\{\pm 1,\pm 20,\pm 23,\pm 26,\pm 30,\pm 32,\pm 34\}$ & $26$ & Lemma \ref{lem:cusp1}
 \\ \hline
 $72$ & $\Delta_1=\{\pm 1,\pm 17\}$ & $49$ & Corollary \ref{cor:all}
 \\
  & $\Delta_2=\{\pm 1,\pm 19\}$ & $49$ & Corollary \ref{cor:all}
 \\
  & $\Delta_3=\{\pm 1,\pm 35\}$ & $41$ & Corollary \ref{cor:all}
 \\
  & $\Delta_4=\{\pm 1,\pm 23,\pm 25\}$ & $25$ & Corollary \ref{cor:all}
 \\
  & $\Delta_5=\{\pm 1,\pm 17,\pm 19\, \pm 35\}$ & $21$ & Lemma \ref{lem:castel-method}
 \\
  & $\Delta_6=\{\pm 1,\pm 5,\pm 19,\pm 23,\pm 25,\pm 29\}$ & $13$ & Lemma \ref{lem:fp-app}
 \\
  & $\Delta_7=\{\pm 1,\pm 7,\pm 17,\pm 23,\pm 25,\pm 31\}$ & $13$ & Lemma \ref{lem:fp-app}
 \\
  & $\Delta_8=\{\pm 1,\pm 11,\pm 13,\pm 23,\pm 25,\pm 35\}$ & $9$ & Lemma \ref{lem:castel-method}
 \\ \hline
 $75$ & $\Delta_1=\{\pm 1,\pm 26\}$ & $73$ & Lemma \ref{lem:cusp1}
 \\
  & $\Delta_2=\{\pm 1,\pm 7,\pm 26,\pm 32\}$ & $37$ & Lemma \ref{lem:cusp1}
 \\
  & $\Delta_3=\{\pm 1,\pm 14,\pm 16,\pm 29,\pm 31\}$ & $14$ & Lemma \ref{lem:cusp1}
 \\
  & $\Delta_4=\{\pm 1,\pm 4,\pm 11,\pm 14,\pm 16,\pm 19,\pm 26,\pm 29,\pm 31,\pm 34\}$ & $9$ & Lemma \ref{lem:cusp1}
 \\ \hline
 $79$ & $\Delta_{1}=\{\pm 1,\pm 23,\pm 24\}$ & $66$  & Lemma \ref{lem:cusp1}
 \\
   & $\Delta_{2}=\{\pm 1,\pm 8,\pm 10,\pm 12,\pm 14,\pm 15,\pm 17,\pm 18,\pm 21,\pm 22,\pm 27,$ & $18$ & Lemma \ref{lem:cusp1}
 \\
  & $\hspace{1.2cm}\pm 33,\pm 38\}$ &   &  
 \\  \hline
  $81$ & $\Delta_1=\{\pm 1,\pm 26,\pm 28\}$ & $46$ & Corollary \ref{cor:all}
 \\
  & $\Delta_2=\{\pm 1,\pm 8,\pm 10,\pm 17,\pm 19,\pm 26,\pm 28,\pm 35,\pm 37\}$ & $10$ & Lemma \ref{lem:castel-method}
 \\ \hline
 $89$ & $\Delta_1=\{\pm 1,\pm 34\}$ & $133$ & Lemma \ref{lem:cusp1}
 \\
   & $\Delta_2=\{\pm 1,\pm 12,\pm 34,\pm 37\}$ & $67$ & Lemma \ref{lem:cusp1}
 \\
   & $\Delta_3=\{\pm 1,\pm 2,\pm 4,\pm 8,\pm 11,\pm 16,\pm 22,\pm 25,\pm 32,\pm 39,\pm 44\}$ & $27$ & Lemma \ref{lem:cusp1}
 \\
   & $\Delta_4=\{\pm 1,\pm 2,\pm 4,\pm 5,\pm 8,\pm 9,\pm 10,\pm 11,\pm 16,\pm 17,\pm 18,\pm 20,$ & $13$ & Lemma \ref{lem:cusp1}
 \\
  & $\hspace{1.2cm}\pm 21,\pm 22,\pm 25,\pm 32,\pm 34,\pm 36,\pm 39,\pm 40,\pm 42,\pm 44\}$ &   &  
 \\ \hline
  $92$ & $\Delta_1=\{\pm 1,\pm 45\}$ & $100$ & Lemma \ref{lem:cusp2}
 \\
  & $\Delta_2=\{\pm 1,\pm 7,\pm 9,\pm 11,\pm 13,\pm 15,\pm 19,\pm 25,\pm 29,\pm 41,\pm 43\}$ & $20$ & Lemma \ref{lem:cusp2}
 \\ \hline
$95$ & $\Delta_1=\{\pm 1,\pm 39\}$ & $145$ & Lemma \ref{lem:cusp1}
 \\
  & $\Delta_2=\{\pm 1,\pm 11,\pm 26\}$ & $97$ & Lemma \ref{lem:cusp1}
 \\
  & $\Delta_3=\{\pm 1,\pm 18,\pm 37,\pm 39\}$ & $73$  & Lemma \ref{lem:cusp1}
 \\
  & $\Delta_4=\{\pm 1,\pm 11,\pm 26,\pm 31,\pm 39,\pm 46\}$ & $49$  & Lemma \ref{lem:cusp1}
 \\
  & $\Delta_5=\{\pm 1,\pm 6,\pm 11,\pm 14,\pm 16,\pm 26,\pm 29,\pm 34,\pm 36\}$ & $33$ & Lemma \ref{lem:cusp1}
 \\
  & $\Delta_6=\{\pm 1,\pm 7,\pm 8,\pm 11,\pm 12,\pm 18,\pm 26,\pm 27,\pm 31,\pm 37,\pm 39,$ & $25$ & Lemma \ref{lem:cusp1}
 \\
  & $\hspace{1.2cm}\pm 46\}$ &   &  
 \\
  & $\Delta_{7}=\{\pm 1,\pm 4,\pm 6,\pm 9,\pm 11,\pm 14,\pm 16,\pm 21,\pm 24,\pm 26,\pm 29,$ & $17$ & Lemma \ref{lem:cusp1}
 \\
  & $\hspace{1.2cm}\pm 31,\pm 34,\pm 36,\pm 39,\pm 41,\pm 44,\pm 46\}$ &   &  
 \\ \hline
$101$ & $\Delta_1=\{\pm 1,\pm 10\}$ & $176$ & Lemma \ref{lem:cusp1}
 \\
  & $\Delta_2=\{\pm 1,\pm 6, \pm 14,\pm 17,\pm 36\}$ & $76$ & Lemma \ref{lem:cusp1}
 \\
  & $\Delta_3=\{\pm 1,\pm 6,\pm 10,\pm 14,\pm 17,\pm 32,\pm 36,\pm 39,\pm 41,\pm 44\}$ & $36$  & Lemma \ref{lem:cusp1}
 \\
  & $\Delta_4=\{\pm 1,\pm 4,\pm 5,\pm 6,\pm 9,\pm 13,\pm 14,\pm 16,\pm 17,\pm 19,\pm 20,$ & $16$ & Lemma \ref{lem:cusp1}
 \\
  & $\hspace{1.2cm}\pm 21,\pm 22,\pm 23,\pm 24,\pm 25,\pm 30,\pm 31,\pm 33,\pm 36,\pm 37,$ &   &  
 \\
  & $\hspace{1.2cm}\pm 43,\pm 45,\pm 47,\pm 49\}$ &   &  
 \\  \hline
$119$ & $\Delta_1=\{\pm 1,\pm 50\}$ & $241$ & Lemma \ref{lem:cusp1}
 \\
  & $\Delta_2=\{\pm 1,\pm 18,\pm 33\}$ & $161$ & Lemma \ref{lem:cusp1}
 \\
  & $\Delta_3=\{\pm 1,\pm 13,\pm 50,\pm 55\}$ & $121$  & Lemma \ref{lem:cusp1}
 \\
  & $\Delta_4=\{\pm 1,\pm 16,\pm 18,\pm 33,\pm 50,\pm 52\}$ & $81$  & Lemma \ref{lem:cusp1}
 \\
  & $\Delta_5=\{\pm 1,\pm 8,\pm 13,\pm 15,\pm 36,\pm 43,\pm 55,\pm 69\}$ & $61$ & Lemma \ref{lem:cusp1}
 \\
  & $\Delta_6=\{\pm 1,\pm 4,\pm 13,\pm 16,\pm 18,\pm 30,\pm 33,\pm 38,\pm 47,\pm 50,\pm 52,$ & $41$ & Lemma \ref{lem:cusp1}
 \\
  & $\hspace{1.2cm}\pm 55\}$ &   &  
 \\
  & $\Delta_{7}=\{\pm 1,\pm 6,\pm 8,\pm 13,\pm 15,\pm 20,\pm 22,\pm 27,\pm 29,\pm 41,\pm 43,$ & $31$ & Lemma \ref{lem:cusp1}
 \\
  & $\hspace{1.2cm}\pm 48,\pm 50,\pm 55,\pm 57\}$ &   &  
 \\
  & $\Delta_{8}=\{\pm 1,\pm 2,\pm 4,\pm 8,\pm 9,\pm 13,\pm 15,\pm 16,\pm 18,\pm 19,\pm 25,$ & $21$ & Lemma \ref{lem:cusp1}
 \\
  & $\hspace{1.2cm}\pm 26,\pm 30,\pm 32,\pm 33,\pm 36,\pm 38,\pm 43,\pm 47,\pm 50,\pm 52,$ &   &  
   \\
  & $\hspace{1.2cm}\pm 53,\pm 55,\pm 59\}$ &   &  
 \\ \hline
$131$ & $\Delta_1=\{\pm 1,\pm 42,\pm 53,\pm 58,\pm 61\}$ & $131$ & Lemma \ref{lem:cusp1}
 \\
  & $\Delta_2=\{\pm 1,\pm 18,\pm 19,\pm 24,\pm 32,\pm 39,\pm 45,\pm 47,\pm 51,\pm 52$ & $51$ & Lemma \ref{lem:cusp1}
 \\
  & $\hspace{1.2cm}\pm 60,\pm 62,\pm 68\}$ &   &  
 \\ \hline

\end{longtable}
\end{center}

\end{document}